\def \bW {\overline{\mathcal{W}}}
\DeclareMathOperator{\Rm}{Rm}
\DeclareMathOperator{\Ric}{Ric}
\newcommand*{\rom}[1]{\rm {\expandafter\@slowromancap\romannumeral #1@}}
\def \tg {\tilde{g}}
\def \tnu {\tilde{\nu}}
\def \tld {\tilde{\lambda}}
\def \ld {\lambda}
\def\D{\Delta_f}
\def \a {\alpha}
\def\la{\langle}
\def\ra{\rangle}
\def\na{\nabla}
\newcommand{\be}{\begin{equation}}
\newcommand{\ee}{\end{equation}}
\newcommand{\bee}{\begin{equation*}}
\newcommand{\eee}{\end{equation*}}
\numberwithin{equation}{section}
\newtheorem{Theorem}{Theorem}[section]
\newtheorem{Proposition}[Theorem]{Proposition}
\newtheorem{Lemma}[Theorem]{Lemma}
\newtheorem{Corollary}[Theorem]{Corollary}
\theoremstyle{definition}
\newtheorem{Definition}[Theorem]{Definition}
\newtheorem{Remark}[Theorem]{Remark}
\begin{document}

\title{Hamilton-Ivey  estimates for gradient Ricci solitons}
\author{Pak-Yeung Chan, Zilu Ma, and Yongjia Zhang}

\maketitle

\begin{abstract} We first show that any $4$-dimensional non-Ricci-flat steady gradient Ricci soliton singularity model must satisfy $|{\Rm}|\le cR$ for some positive constant $c$. Then, we apply the Hamilton-Ivey estimate to prove a quantitative lower bound of the curvature operator for $4$-dimensional steady gradient solitons with linear scalar curvatrue decay and proper potential function. The technique is also used to establish a sufficient condition for a $3$-dimensional expanding gradient Ricci soliton to have positive curvature. This sufficient condition is satisfied by a large class of conical expanders. As an application, we remove the positive curvature condition in a classification result by \cite{Cho14} in dimension three and show that any $3$-dimensional gradient Ricci expander 
$C^2$ asymptotic to $\left(C(\mathbb S^2), dt^2+\a t^2 g_{\mathbb{S}^2}\right)$ is rotationally symmetric, where $\a \in (0,1]$ is a constant and $g_{\mathbb{S}^2}$ is the standard metric on $\mathbb{S}^2$ with constant curvature $1$.
\end{abstract}

\section{Introduction}
A triple $(M^n,g, f)$ consisting of a connected smooth Riemannian manifold $(M^n,g)$ and a function $f\in C^{\infty}(M)$ is called a gradient Ricci soliton, if the following equation is satisfied for some constant $\kappa$:
\begin{equation}\label{generalsoliton}
    \Ric + \nabla^2 f = \tfrac{\kappa}{2} g.
\end{equation}
The soliton is called \emph{shrinking} if $\kappa>0$, \emph{steady} if $\kappa=0$, and  \emph{expanding} if $\kappa<0$. By scaling the metric with a constant, we may always assume that $\kappa \in \{-1, 0, 1\}$. A complete gradient Ricci soliton induces a self-similar solution to the Ricci flow, called the \emph{canonical form}. More precisely, if $\Phi_t$ is a family of self-diffeomorphisms on $M$ and $g_t$ is a family of metrics given by
\begin{align}\label{canonicalform}
    \frac{\partial}{\partial t}\Phi_t & =\frac{1}{1-\kappa t}\nabla f\circ \Phi_t,\\\nonumber
      \Phi_0&= \text{ id},\\\nonumber
    g_t &= (1-\kappa t)\Phi_t^*g,
\end{align}
then $g_t$ evolves by the Ricci flow with $g_0=g$. When $\kappa>0$, $\kappa=0$, or $\kappa<0$, the canonical form is \emph{ancient}, \emph{eternal}, or \emph{immortal}, respectively. The Ricci soliton is an important field of study, since they arise naturally as rescaled limits of  Ricci flows near singularities. The blow-up limit at a Type I finite-time singularity, or the backward scaled limit of a Type I ancient solution, is the canonical form of a Ricci shrinker (see \cite{Per02, Nab10, EMT11, MZ21} and the references therein), whereas Type II and Type III scaled limits of Ricci flows are closely related to steady and expanding solitons, respectively (see \cite{Ham93, Ham95, Cao97, CZ00, Lot07, GZ08}). Moreover, a non-Ricci-flat steady soliton may appear as a rescaled limit at the spatial infinity of a $4$-dimensional shrinking soliton singularity model with unbounded curvature \cite{CFSZ20}. Hence, the investigation of geometric properties of Ricci solitons will shed much light on the singularity analysis of the Ricci flow.

The classification of 3-dimensional shrinking gradient Ricci solitons has been completed. Any complete gradient shrinker in dimension three is isometric to either  $\mathbb{R}^3$, or
a quotient of $\mathbb{S}^3$, or a quotient of $\mathbb{R}\times \mathbb{S}^2$ (see \cite{ Ive93, Ham95, Per03a, CCZ08, NW08, Che09, Nab10}). However, the general picture of $3$-dimensional steady solitons is far from clear. One significant result is due to Brendle \cite{Br13}. He proved a conjecture due to Perelman \cite{Per02}, namely, that any nonflat and noncollapsed 3-dimensional steady gradient Ricci soliton is the Bryant soliton up to scaling (see also \cite{Br14, Br20}). Deng-Zhu \cite{DZ19, DZ20} generalized Brendle's result and classified steady gradient solitons with at least linear curvature decay in dimension three (see also \cite{MSW19} and references therein). One may wonder whether the Bryant soliton and $\mathbb{R}\times \Sigma_0$ are the only non-trivial $3$-dimensional steady gradient solitons, where $\Sigma_0$ denotes  Hamilton's cigar soliton. Indeed, $3$-dimensional steady solitons are more complicated than their shrinking counterparts. Very recently, Lai \cite{Lai20} has resolved a conjecture due to Hamilton and constructed a family of $3$-dimensional steady solitons that are flying wings. These examples are collapsed, positively curved, and on them the curvature does not decay uniformly to $0$ at infinity (new noncollapsed examples of steady solitons in higher dimensions are also constructed in \cite{Lai20}).

One important feature which facilitates the study of shrinking and steady solitons in dimension three is that these solitons are nonnegatively curved, which is a consequence due to the celebrated Hamilton-Ivey pinching estimate \cite{Ive93, Ham95, Che09} (see also \cite{CXZ13}). As a consequence of this estimate, any $3$-dimensional complete ancient Ricci flow must have nonnegative sectional curvature. Since the canonical form of a shrinking or steady gradient Ricci soliton is an ancient Ricci flow, these two types of solitons are nonnegatively curved. 

\begin{Theorem}\label{HI est}\cite{Ham95, Ive93, Che09}
Any nonflat $3$-dimensional complete shrinking or steady gradient Ricci soliton must have nonnegative sectional curvature. Consequently,
\be\label{Rm est by R 3 dim}
|{\Rm}|\leq CR,
\ee
for some numerical constant $C>0$, where $R$ is the scalar curvature.
\end{Theorem}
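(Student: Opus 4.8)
The plan is to pass to the canonical form \eqref{canonicalform} and exploit the fact that, for a shrinking or steady gradient soliton, it is an \emph{ancient} Ricci flow. First I would check that when $\kappa \ge 0$ the self-similar family $g_t$ exists as a complete solution on a time interval of the form $(-\infty, T)$ (eternal when $\kappa=0$), which requires only that the flow $\Phi_t$ generated by $\nabla f$ is defined for all $t\le 0$; this follows from completeness of $(M,g)$ together with the first-order decay of $|\nabla f|$ built into \eqref{generalsoliton}. The problem is thereby reduced to proving that any complete ancient Ricci flow in dimension three has nonnegative curvature operator, and then reading off \eqref{Rm est by R 3 dim}.

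The analytic core is the Hamilton--Ivey pinching estimate, obtained from Hamilton's tensor maximum principle. Diagonalizing the curvature operator with eigenvalues $\lambda\ge\mu\ge\nu$, the reaction term $\mathcal{R}^2+\mathcal{R}^{\#}$ in the evolution equation $\partial_t\mathcal{R}=\Delta\mathcal{R}+\mathcal{R}^2+\mathcal{R}^{\#}$ induces the ODE system
\begin{align*}
\dot{\lambda}&=\lambda^2+\mu\nu,\\
\dot{\mu}&=\mu^2+\lambda\nu,\\
\dot{\nu}&=\nu^2+\lambda\mu.
\end{align*}
I would exhibit a closed convex subset of curvature operators, invariant under parallel transport and under this system, encoding the pinching alternative: either $\nu\ge 0$, or, whenever $\nu<0$,
\[
R\ \ge\ (-\nu)\bigl(\log(-\nu)+\log(1+t)-3\bigr),
\]
under the initial normalization $\nu\ge -1$. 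The decisive computation is to verify that the reaction vector field points into this set along its boundary, which uses $R=2(\lambda+\mu+\nu)$ and the sign structure of the ODEs when $\nu<0$; I expect this invariance check, rather than the geometry, to be the main routine obstacle.

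To convert pinching into genuine nonnegativity I would use the ancient hypothesis. Fixing a spacetime point $(x_0,t_0)$, for every $s>0$ the flow exists on $[t_0-s,t_0]$; rescaling so that $\nu\ge -1$ holds at the earlier slice $t_0-s$ and applying the estimate over elapsed time $s$, one finds that the elapsed-time factor survives the rescaling (it remains bounded below by $\log s$), so that $\nu(x_0,t_0)<0$ would force $R(x_0,t_0)\ge (-\nu(x_0,t_0))\log s\to+\infty$ as $s\to\infty$, which is absurd. Hence $\nu\ge 0$ everywhere; since in dimension three every $2$-vector is decomposable, nonnegativity of the curvature operator is exactly nonnegativity of the sectional curvature. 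The genuinely hard point here is \emph{not} the maximum-principle computation but the removal of any a priori global curvature bound: the tensor maximum principle on a noncompact manifold, and the rescaling normalization above, must be localized via cutoff functions and Shi's local derivative estimates, which is precisely the refinement supplied by \cite{Che09}.

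Finally, \eqref{Rm est by R 3 dim} is immediate once $0\le\nu\le\mu\le\lambda$: the operator norm of the curvature equals $\lambda\le\lambda+\mu+\nu=R/2$, and in dimension three $|{\Rm}|$ is controlled by the largest eigenvalue of the curvature operator, so $|{\Rm}|\le CR$ for a numerical constant $C$.
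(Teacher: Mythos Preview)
The paper does not supply its own proof of this theorem; it is stated as a known background result with citations to \cite{Ham95, Ive93, Che09}, and the only justification offered is the one-line remark preceding the statement: the canonical form of a shrinking or steady soliton is an ancient Ricci flow, and any $3$-dimensional complete ancient Ricci flow has nonnegative sectional curvature by Hamilton--Ivey--Chen. Your sketch expands exactly this argument and correctly identifies Chen's localization as the point that removes the bounded-curvature hypothesis, so your approach is the same as what the paper (implicitly, via citation) invokes.

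One small quibble: your justification for the completeness of the flow of $\nabla f$ (``first-order decay of $|\nabla f|$'') is not quite the right reason. For a steady soliton $|\nabla f|^2+R$ is constant, so $|\nabla f|$ is bounded and the flow is complete; for a shrinker one uses that $|\nabla f|$ grows at most linearly (from $|\nabla f|^2+R=f$ and the known growth of $f$), which again suffices. This is a standard fact and does not affect the validity of your outline.
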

In view of Theorem \ref{HI est}, two natural questions arise immediately. The first one is whether or not higher dimensional shrinking or steady solitons have nonnegative sectional curvature. Another one is under what condition does a $3$-dimensional expanding soliton have nonnegative curvature. The answer to the first question is negative. Some counterexamples are the shrinkers constructed by Feldman-Ilmanen-Knopf \cite{FIK03} on $O(-k)$, where $1\leq k<n$, the steady solitons constructed by Cao \cite{Cao94}, and the steady solitons constructed by Appleton \cite{Ap17}, where the latter two are constructed on line bundles over the complex projective space. All of these examples do not have nonnegative sectional curvature on the entire manifold. Nevertheless, one may still propose related questions, such as, whether a $4$-dimensional Ricci soliton has bounded curvature, or whether an estimate like \eqref{Rm est by R 3 dim} is true in dimension four. One of the best results for shrinking solitons in these directions is obtained by Munteanu-Wang \cite{MW15}:
\begin{Theorem}\cite{MW15}
Let $(M^4, g, f)$ be a complete $4$-dimensional shrinking gradient Ricci soliton with bounded scalar curvature. Then the curvature operator $\Rm$ has bounded norm, and
\be\label{Rm est by R 4 dim}
|{\Rm}|\leq cR \quad\text{  on } \quad M,
\ee
for some positive constant $c$.
\end{Theorem}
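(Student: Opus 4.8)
The plan is to combine the self-similar (shrinker) structure with weighted elliptic estimates, letting dimension four enter through the algebraic splitting of the curvature operator. Normalize so that $\Ric + \na^2 f = \tfrac12 g$, and recall the standard soliton identities $\na R = 2\Ric(\na f)$, $R + |\na f|^2 = f$, $\Delta f = 2 - R$, and $\D R = R - 2|\Ric|^2$, where $\D = \Delta - \la \na f, \na\,\cdot\,\ra$ is the drift Laplacian, which is self-adjoint with respect to the finite measure $e^{-f}dV$. I would also invoke the Cao--Zhou quadratic growth $\tfrac14(d(x,x_0)-c)^2 \le f(x) \le \tfrac14(d(x,x_0)+c)^2$ and the resulting at-most-Euclidean volume growth, as well as B.-L.\ Chen's fact that $R \ge 0$ (with $R>0$ unless the soliton is flat). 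Integrating $\D R = R - 2|\Ric|^2$ against $e^{-f}$ over $M$ gives $\int_M |\Ric|^2 e^{-f} = \tfrac12 \int_M R\, e^{-f} < \infty$, since $R$ is bounded and $\int_M e^{-f}dV < \infty$; pairing the same equation with $R$ and integrating by parts then yields $\int_M |\na R|^2 e^{-f} < \infty$.

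The next step is to upgrade from $\Ric$ to the full curvature, and this is exactly where $n=4$ is used. In dimension four the curvature operator decomposes into its scalar part, its traceless Ricci part, and the Weyl tensor $W$, so controlling $|\Rm|$ reduces to controlling $|W|$. The contracted second Bianchi identity expresses $\div W$ through the Cotton tensor, i.e.\ through $\na \Ric$ and $\na R$; integrating $|\div W|^2 e^{-f}$, using the Ricci estimates above together with the Lichnerowicz-type equation $\D \Rm = \Rm + \Rm * \Rm$ to absorb the purely algebraic terms, I would bootstrap control of $\int_M |\na \Ric|^2 e^{-f}$ and thereby derive the crucial global bound $\int_M |\Rm|^2 e^{-f} < \infty$.

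With this weighted $L^2$ bound in hand, I would run a weighted De Giorgi--Nash--Moser iteration on the differential inequality $\D |\Rm| \ge -c\,|\Rm|^2$ (valid where $|\Rm| > 0$), using the local Sobolev and volume-doubling properties supplied by the quadratic growth of $f$, to promote the integral bound to the pointwise bound $\sup_M |\Rm| < \infty$. This already establishes that the curvature operator is bounded.

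The main obstacle is the passage from boundedness to the \emph{proportional} bound $|\Rm| \le cR$, since it forces $|\Rm|$ to vanish at the same rate as $R$ along any end where $R \to 0$. I would argue by contradiction: if $\sup_M |\Rm|/R = \infty$, choose $x_i$ with $(|\Rm|/R)(x_i) \to \infty$; as $|\Rm|$ is bounded, $R(x_i) \to 0$, so such points escape to spatial infinity. Rescaling so that $|\Rm| = 1$ at $x_i$ yields, after passing to a limit, a complete limit with $R \equiv 0$ but $|\Rm| = 1$ at the base point; since $R \ge 0$ and $\D R = R - 2|\Ric|^2$, the strong maximum principle forces $\Ric \equiv 0$ on the limit. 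If the limit retains a shrinking structure, Ricci-flatness gives $\na^2 f = \tfrac12 g$ and hence flatness, contradicting $|\Rm| = 1$. The genuine difficulty is the borderline case in which the rescaling factor degenerates and the limit is merely a nontrivial Ricci-flat space with $|W| \ne 0$, which is not excluded a priori; ruling it out requires the soliton's asymptotic geometry at the scale of $R$ — the quadratic potential and the non-collapsing/volume bounds — to show the limit must in fact be flat. Establishing compactness of the rescaled sequence and eliminating this Ricci-flat alternative is where I expect the real work to lie.
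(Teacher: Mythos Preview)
This statement is quoted from \cite{MW15} and is not proved in the present paper; however, the paper does reproduce and apply the core Munteanu--Wang technique when proving the analogous steady-soliton result (Theorem~\ref{Rm by R}), so one can compare your plan against that.

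The decisive structural input in dimension four is \emph{not} the algebraic decomposition $\Rm = W + \text{Ric-part} + \text{scalar part}$ that you invoke, but rather the dimension reduction via the level sets of $f$. From the soliton identity $R(Z,Y,X,\na f) = \na_Z\Ric(X,Y) - \na_Y\Ric(X,Z)$ one sees that all curvature components involving the direction $\na f/|\na f|$ are controlled by $|\na\Ric|/|\na f|$; on the $3$-dimensional level set the Weyl tensor vanishes identically, so the remaining components are controlled by $|\Ric|$. This yields the pointwise inequality $|\Rm| \le A_0\big(|\Ric| + |\na\Ric|/|\na f|\big)$ wherever $\na f\ne 0$ (stated here as the lemma \eqref{Rm controlled by Rc} from \cite{MW15}). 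Your route through $\operatorname{div} W$ and the Cotton tensor gives information about $\na W$ but not about $|W|$ itself, and the bootstrap you describe from $\int|\operatorname{div}W|^2 e^{-f}$ to $\int|\Rm|^2 e^{-f}$ is not justified as written.

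More seriously, your proposed proof of the proportional bound $|\Rm|\le cR$ via blow-up has a genuine gap that you yourself flag: after rescaling so that $|\Rm|(x_i)=1$ with $R(x_i)\to 0$, the limit is a priori only a Ricci-flat $4$-manifold with $|W|\ne 0$ somewhere, and nothing in your outline rules this out. The Munteanu--Wang argument (mirrored in Section~3 here for the steady case) avoids this entirely: one applies the maximum principle directly to combinations such as $|\Ric|^2/R^2$ and $|\Rm|/R$, using the pointwise lemma above to close the differential inequality \eqref{diff ineq for RicR}--\eqref{large diff ineq} on the region $\{|\na f|^2>\tfrac12\}$, while on $\{|\na f|^2\le\tfrac12\}$ one has $R\ge\tfrac12$ and the bound is trivial. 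No compactness or limiting argument is needed.
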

The curvature estimate \eqref{Rm est by R 4 dim} also has additional interest to Munteanu-Wang's program of classifying 4-dimensional noncompact shrinkers according to the behavior of scalar curvature at infinity \cite{KW15, MW15, MW17, MW19, CDS19}. Moreover, the estimate \eqref{Rm est by R 4 dim} can be interpreted as a weak notion of curvature nonnegativity, 
namely, that the sectional curvatures of different $2$-planes are pinched in a way that the negative ones are outweighed by the positive ones, and that a scalar multiple of the average (i.e., the scalar curvature) bounds the entire tensor norm $|{\Rm}|$. The estimate of $|{\Rm}|$ in the potentially unbounded curvature setting is lately studied by Chow, Freedman, Shin, and the third-named author \cite{CFSZ20}, and Cao, Ribeiro, and Zhou \cite{CRZ20}.

In the $4$-dimensional steady case, Cao-Cui \cite{CC20} showed that $|{\Rm}|\leq cR^{\a}$, where $\a$ is any number in $(0,1/2)$, provided that the scalar curvature $R$ coverges to $0$ at infinity and that $R\geq Cr^{-k}$ near infinity for some $k>0$. Later,  the first-named author \cite{Cha19} removed the condition $R\geq Cr^{-k}$ and proved \eqref{Rm est by R 4 dim} for non-trivial steady gradient soliton with $\lim_{x\to\infty}R=0$ (see also \cite{Cha20, Che20, CZh21}). However, there are steady solitons on which $R$ does not go to $0$ at infinity, for instance, $\mathbb{R}\times B_3$, $\mathbb{R}^2\times \Sigma_0$ or $\Sigma_0\times\Sigma_0$, where $\Sigma_0$ and $B_3$ denote Hamilton's cigar soliton and the $3$-dimensional Bryant soliton. It is interesting to see if \eqref{Rm est by R 4 dim} is still true without curvature decay assumption. Very recently, \eqref{Rm est by R 4 dim} has been established by Cao-Liu \cite{CL21} on $4$-dimensional expanding solitons under the assumptions that $\Ric\ge 0$ and that $R$ has at most polynomial decay. See also \cite{Der17, Cha20, Che20, Zhl21} for other estimates on the expanding solitons. Ancient Ricci flows and steady solitons satisfying the estimate \eqref{Rm est by R 4 dim} have been recently studied in \cite{MZ21, CMZ21a}. 


We first investigate the curvature of $4$-dimensional steady Ricci soliton with bounded Riemann curvature tensor.
\begin{Theorem}\label{Rm by R}
Let $(M^4,g,f)$ be a $4$-dimensional complete and non-Ricci-flat  steady gradient soliton with bounded Riemann curvature. Then there exists a positive constant $c$ such that
\bee
|{\Rm}|\leq cR\quad \text{  on }\quad M,
\eee
where $\Rm$ and $R$ denote the Riemann curvature tensor and the scalar curvature, respectively.
\end{Theorem}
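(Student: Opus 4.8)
The plan is to argue by contradiction using a blow-up (point-picking) argument, exploiting the rigidity of Ricci-flat limits in dimension four. Throughout I use the standard steady identities $\Delta f=-R$, $\nabla R=2\Ric(\nabla f,\cdot)$, $R+|\nabla f|^2=C_0$ for a constant $C_0>0$, and $\Delta_f R=-2|\Ric|^2\le 0$. Since a steady soliton is eternal, hence ancient, $R\ge 0$; as $(M,g,f)$ is non-Ricci-flat, the strong maximum principle applied to $\Delta_f R\le 0$ forces $R>0$ everywhere, so $u:=|\Rm|/R$ is a well-defined nonnegative function. I will also use that the steady structure is scale-invariant: if $g_i=\mu_i g$ then $(M,g_i,f)$ is again a steady soliton with the same potential $f$.

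Suppose the theorem fails, so $\sup_M u=\infty$ and there are points $x_i$ with $u(x_i)\to\infty$. Because $|\Rm|\le K$ is bounded, this forces $R(x_i)\to 0$, and since $R>0$ is bounded below on compact sets, $x_i\to\infty$. Set $\mu_i:=|\Rm|(x_i)\in(0,K]$. After a standard point-picking modification of the $x_i$ (replacing them by nearby points where the curvature is comparable to its central value on a parabolically large ball), I rescale $g_i=\mu_i g$, so that $|\Rm|_{g_i}(x_i)=1$ while $R_{g_i}(x_i)=R(x_i)/\mu_i=1/u(x_i)\to 0$. Passing to the canonical form, the rescaled flows are eternal with uniformly bounded curvature near $x_i$, and (modulo the compactness issue below) I extract a pointed limit $(M_\infty,g_\infty,x_\infty)$ which is a complete eternal Ricci flow with $|\Rm_\infty|(x_\infty)=1$, $R_\infty\ge 0$, and $R_\infty(x_\infty)=0$. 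The strong maximum principle for $\partial_t R=\Delta R+2|\Ric|^2$ then yields $R_\infty\equiv 0$, hence $\Ric_\infty\equiv 0$: the limit is Ricci-flat.

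To reach a contradiction I produce a splitting from the potential. Since $\nabla^2 f=-\Ric$ is scale-invariant as a $(0,2)$-tensor, $|\nabla^2 f|_{g_i}=|\Ric|_{g_i}$ stays bounded on the relevant balls, while $|\nabla f|_{g_i}^2(x_i)=(C_0-R(x_i))/\mu_i$. If $\mu_i\to\mu_\infty>0$, then $f$ (normalized by $f(x_i)=0$) converges to an affine function $f_\infty$ with $|\nabla f_\infty|^2(x_\infty)=C_0/\mu_\infty>0$ and $\nabla^2 f_\infty=-\Ric_\infty=0$. If $\mu_i\to 0$, then $|\nabla f|_{g_i}(x_i)\to\infty$ and I instead normalize $h_i:=(f-f(x_i))/|\nabla f|_{g_i}(x_i)$, so that $|\nabla h_i|_{g_i}(x_i)=1$ and $|\nabla^2 h_i|_{g_i}=|\Ric|_{g_i}/|\nabla f|_{g_i}(x_i)\to 0$, giving an affine limit $h_\infty$ with $|\nabla h_\infty|(x_\infty)=1$. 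In either case $M_\infty$ carries a nonconstant affine function, hence splits isometrically as $\mathbb{R}\times N^3$ with $N^3$ Ricci-flat; but a $3$-dimensional Ricci-flat manifold is flat, so $M_\infty$ is flat, contradicting $|\Rm_\infty|(x_\infty)=1$.

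The main obstacle is the compactness step: a general bounded-curvature steady soliton need not be noncollapsed (e.g.\ flying wings), so Cheeger--Gromov--Hamilton compactness does not apply directly to extract the smooth limit above. The remedy I would pursue is to run the blow-up in the local universal covers, using Cheeger--Fukaya--Gromov collapsing theory to pass to a (possibly lower-dimensional) limit soliton, and to reduce the collapsed directions to known cases---in particular to the three-dimensional situation, where Theorem \ref{HI est} already gives $|\Rm|\le CR$. Verifying that the splitting/flatness contradiction survives this reduction, and that the point-picking produces genuinely Ricci-flat limits rather than merely low-scalar-curvature ones, is where the technical heart of the argument lies.
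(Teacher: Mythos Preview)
Your approach via blow-up and Ricci-flat rigidity is genuinely different from the paper's, which proceeds by direct maximum-principle estimates with no compactness or limiting step whatsoever. The paper first proves the intermediate lemma $|\Ric|\le c_1 R$ by applying the Omori--Yau (equivalently, Pigola--Rimoldi--Setti weak) maximum principle for $\Delta_f$, which is available simply because $\Ric$ is bounded below and $|\nabla f|\le 1$; the test function is $u=|\Ric|+|\Ric|^2-Q_1 R$, satisfying $\Delta_f u\ge 2|\Ric|^2$ on $\{|\nabla f|^2>\tfrac12\}$. With $|\Ric|\le c_1 R$ in hand, the paper combines the Munteanu--Wang inequality $|\Rm|\le A_0(|\Ric|+|\nabla\Ric|/|\nabla f|)$ with the evolution-type inequalities for $|\Ric|^2/R^2$ and $|\Rm|/R$ to get, for $W:=|\Rm|/R+\lambda|\Ric|^2/R^2$, the coercive bound $\Delta_{f-2\ln R}W\ge W^2-c''$ on $\{|\nabla f|^2>\tfrac12\}$, and finishes by a cutoff-localized maximum principle. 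No noncollapsing, no convergence, no geometric limits.

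The gap you yourself flag is real and, as written, fatal. Four-dimensional steady solitons with bounded curvature can be collapsed (e.g.\ $\mathbb{R}^2\times\Sigma_0$, $\Sigma_0\times\Sigma_0$, $\mathbb{R}\times B_3$), so Hamilton's compactness does not give a smooth pointed limit on which to run the strong maximum principle for $R$ and the splitting argument. Your proposed remedy---unwrapping to local universal covers and invoking Cheeger--Fukaya--Gromov theory---is an outline, not a proof: in the collapsed-with-bounded-curvature regime the limit may drop dimension or acquire an $N$-structure, and then the step ``a Ricci-flat $4$-manifold with a nonconstant affine function splits as $\mathbb{R}\times N^3$, hence is flat'' is no longer available in the form you use it. Nor does reducing to the $3$-dimensional Hamilton--Ivey estimate close the loop, since the collapsing direction need not be a single line factor aligned with $\nabla f$. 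Until you produce a genuine limiting object on which $R_\infty\equiv 0$ and $|\Rm_\infty|(x_\infty)=1$ coexist with an affine potential, the contradiction is not reached. The paper's purely elliptic route sidesteps all of this; if you want to rescue the blow-up strategy, the place to look is precisely an Omori--Yau replacement for the missing compactness, which is in effect what the paper does.
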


The main difference between Theorem \ref{Rm by R} and the previous results in \cite{CC20, Cha19} is that, under the assumption $\sup_M|{\Rm}|<\infty$, the scalar curvature decaying or the Ricci curvature positivity condition is no longer required. Bounded curvature is also a natural assumption. Indeed, among different Ricci solitons, the ones arising as a scaled limit of a compact Ricci flow  with finite-time singularity are of particular interest, since they reveal the asymptotic behavior of the flow near the singular time. In \cite{CFSZ20}, it is proved that a $4$-dimensional shrinking soliton singularity model has at most quadratic curvature growth, and that a $4$-dimensional steady soliton singularity model has bounded Riemann curvature (see \cite{CFSZ20} for the precise definition of Ricci soliton singularity models). Applying the result \cite{CFSZ20} and Theorem \ref{Rm by R}, we see that
\begin{Corollary}\label{sing model bdd} Let $(M^4,g,f)$ be a $4$-dimensional complete non-Ricci-flat gradient steady Ricci soliton singularity model. Then there exists a positive constant $c$ such that
\bee
|{\Rm}|\leq cR \quad\text{  on } \quad M.
\eee
\end{Corollary}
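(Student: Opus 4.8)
The plan is to obtain the estimate as an immediate consequence of Theorem \ref{Rm by R} combined with the curvature boundedness established in \cite{CFSZ20}. The only point requiring attention is to check that a $4$-dimensional steady soliton singularity model satisfies every hypothesis of Theorem \ref{Rm by R}; once this is verified, no further analysis is needed.

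First I would unwind the definition of a Ricci soliton singularity model given in \cite{CFSZ20} to record that $(M^4,g,f)$ is, in particular, a complete $4$-dimensional gradient steady soliton, and by assumption it is non-Ricci-flat. The one additional structural input is the curvature bound: I would invoke the theorem of \cite{CFSZ20} asserting that every $4$-dimensional steady gradient soliton singularity model has bounded Riemann curvature, i.e. $\sup_M |{\Rm}| < \infty$. This is precisely the hypothesis of Theorem \ref{Rm by R} that is not built directly into the notion of a singularity model.

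With this bound in hand, the triple $(M^4,g,f)$ meets all the hypotheses of Theorem \ref{Rm by R}: it is complete, $4$-dimensional, non-Ricci-flat, steady, and of bounded Riemann curvature. Applying that theorem then produces a positive constant $c$ with $|{\Rm}|\le cR$ on $M$, which is exactly the asserted conclusion. Since both ingredients are already available, there is no genuine obstacle in the corollary itself: the entire weight of the argument is carried by Theorem \ref{Rm by R}, whose proof constitutes the substantive part of this section, and by the curvature estimate of \cite{CFSZ20}, with the corollary being merely their juxtaposition.
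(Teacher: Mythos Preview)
Your proposal is correct and matches the paper's approach exactly: the corollary is stated as an immediate consequence of the bounded curvature result from \cite{CFSZ20} together with Theorem \ref{Rm by R}, with no additional argument supplied.
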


On the other hand, Ricci soliton singularity models have also been extensively studied in the literature; see \cite{Per02, Zhz07,CDM20, Bam20a,Bam20b, Bam20c, Bam21, BCDMZ21, BCMZ21}, to list but a few.

Munteanu-Wang \cite{MW15} also generalized the Hamilton-Ivey estimate to dimension four and gave a quantitative lower bound for $4$-dimensional shrinkers with bounded scalar curvature. Previous works on Hamilton-Ivey type estimates under vanishing Weyl tensor condition in higher dimensions included \cite{ELM08, Zh09}.

\begin{Theorem}\cite{MW15}\label{lower bound for Rm shrink}
Suppose that $(M^4,g,f)$ is a $4$-dimensional complete and noncompact shrinking gradient Ricci soliton with bounded scalar curvature. Then there exists a positive constant $C$ such that
\bee
\Rm\geq -\left(\tfrac{C}{\ln(r+1)}\right)^{1/4} \quad\text{  on  } \quad M,
\eee
where $r$ is the distance function from a fixed base point.
\end{Theorem}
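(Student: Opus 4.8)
The plan is to reduce the statement to a pinching estimate for the most negative eigenvalue of the curvature operator and then run a Hamilton--Ivey type argument against the weighted Laplacian $\D = \Delta - \la\na f,\na\cdot\ra$. First I would normalize the shrinker ($\kappa=1$, so $\Ric+\na^2 f=\tfrac12 g$ in \eqref{generalsoliton}): tracing gives $R+\Delta f=2$, and after adjusting $f$ by a constant one has $|\na f|^2+R=f$. By the Cao--Zhou potential estimate $f$ is proper with $\tfrac14(r-c)^2\le f\le \tfrac14(r+c)^2$, so that $\ln(r+1)$ and $\tfrac12\ln f$ are comparable for large $r$; hence the target bound is equivalent to a decay rate in $\ln f$. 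Crucially, the preceding theorem of Munteanu--Wang \cite{MW15} gives $|\Rm|\le cR\le c\sup_M R=:\Lambda<\infty$, so the curvature operator $\mathcal R$ on $\Lambda^2$ is bounded and $\Ric$ is bounded below, which will license the Omori--Yau maximum principle (the finite weighted volume $\int e^{-f}$ provides an integral alternative). Where $\mathcal R\ge 0$ there is nothing to prove, so I would work on the region where the least eigenvalue $\lambda_{\min}(\mathcal R)$ is negative.

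Next I would pass to the soliton equation for curvature. Under Uhlenbeck's trick the curvature operator evolves along the canonical form by $\partial_t\mathcal R=\Delta\mathcal R+\mathcal R^2+\mathcal R^\#$, and self-similarity converts this into the elliptic identity $\D\mathcal R=\mathcal R-(\mathcal R^2+\mathcal R^\#)$, the tensor analogue of $\D R=R-2|\Ric|^2$. Setting $u=\max\{-\lambda_{\min}(\mathcal R),0\}$, the function $-\lambda_{\min}$ is convex in $\mathcal R$, so in the barrier sense $\D u\ge u+(\mathcal R^2+\mathcal R^\#)(v,v)=u+u^2+\mathcal R^\#(v,v)$, where $v$ is a unit eigenvector realizing $\lambda_{\min}=-u$. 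The term $u^2$ is favorable; the Lie-algebra-square term $\mathcal R^\#(v,v)=\sum_{b,c}c_{vbc}^2\,\lambda_b\lambda_c$ is the delicate one, since the cross terms pairing $-u$ with the large positive eigenvalues are negative of size $\lesssim \Lambda u$, giving only the crude inequality $\D u\ge u^2-C\Lambda u$.

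The heart of the argument is that this crude inequality is \emph{not} enough — for small $u$ its right-hand side is essentially $-C\Lambda u$, which no barrier built from $f$ can beat — so one must upgrade it to a genuine Hamilton--Ivey pinching inequality of the form $R\ge u\bigl(\ln u^{-1}+\beta(f)\bigr)$, where the logarithmic gain $\beta(f)$ grows with the scale. I would produce this by applying the maximum principle to a combined quantity $G=G(R,u,f)$ modeled on $R+u(\ln u-\beta(f))$, in which $\beta(f)$ plays the role of Hamilton's $\ln(1+t)$: because the canonical form is ancient, the backward time at a point of potential $f$ is comparable to $f$, so $\beta$ is a power of $\ln f\sim 2\ln r$. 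Differentiating $G$ through the equations $\D R=R-2|\Ric|^2$ and $\D u\ge u+u^2+\mathcal R^\#(v,v)$, and using $\D f=2-R$, $|\na f|^2=f-R$, I would show $G$ has a definite sign via Omori--Yau. Since $R\le\Lambda$ is bounded while $\beta(f)\to\infty$, this forces $u\to 0$ at the logarithmic rate in $r$ recorded in the statement.

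The main obstacle is the Weyl tensor. In dimension three $\mathcal R$ is a $3\times3$ operator and the algebra of $\mathfrak{so}(3)$ makes $\mathcal R^2+\mathcal R^\#$ pinch the negative eigenvalue cleanly, which is why Theorem \ref{HI est} yields genuine nonnegativity. In dimension four $\mathcal R$ is a $6\times6$ operator on $\Lambda^2=\Lambda^+\oplus\Lambda^-$, and the off-diagonal Weyl blocks contribute terms to $\mathcal R^\#(v,v)$ that cannot be controlled purely algebraically; it is precisely the bound $|\Rm|\le cR$ that tames them, but quantitatively one can only absorb the Weyl contribution at the cost of a weaker logarithmic gain. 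I expect the careful bookkeeping of the positive and negative parts of $\mathcal R^\#$, together with the choice of exponent in $\beta(f)$, to be exactly where the degradation to the exponent $1/4$ is produced, yielding $\Rm\ge-\left(\tfrac{C}{\ln(r+1)}\right)^{1/4}$ rather than an inverse-first-power rate.
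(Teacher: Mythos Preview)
The paper does not prove this theorem; it is quoted from \cite{MW15}. However, the paper proves the analogous statement for steady solitons (Theorem \ref{lower bound for Rm in steady}) by adapting the Munteanu--Wang technique, and comparison with that argument exposes the idea your plan is missing.

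Your approach works directly with the full curvature operator $\mathcal R$ on $\Lambda^2(M^4)$ and with the quadratic $\mathcal R^2+\mathcal R^\#$. As you yourself acknowledge, the Lie-algebra square $\mathcal R^\#(v,v)$ in dimension four involves the Weyl tensor in an essential way, and you are left with only the crude inequality $\D u\ge u^2-C\Lambda u$. The hoped-for upgrade to a Hamilton--Ivey quantity $G=R+u(\ln u-\beta(f))$ remains an expectation rather than a computation, and nothing in your outline actually generates the exponent $1/4$. This is a genuine gap: on the $6\times 6$ operator in dimension four the algebra of $\mathcal R^\#$ simply does not close the way it does in dimension three, and no choice of $\beta$ repairs that.

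The technique of \cite{MW15}, which the paper follows in Section 3 for the steady case, bypasses this obstacle entirely by \emph{dimension reduction via the level sets of $f$}. The soliton identity $R_{kjil}f_l=R_{ij,k}-R_{ik,j}$ (formula \eqref{Ric identity}) shows that every curvature component involving the $\nabla f$ direction is controlled by $\nabla\Ric$, which by Shi's estimates is of lower order. One then works with the tensor $U=(R-R_{44})g-2\Ric$ restricted to the $3$-dimensional tangent space of the level set $\Sigma=\{f=\text{const}\}$; via the Gauss equation and \eqref{Rm and Ric} this approximates $T(\tg)=R(\tg)\tg-2\Ric(\tg)$, and since $\Sigma$ is $3$-dimensional the Weyl tensor there vanishes and the clean Hamilton--Ivey algebra of \eqref{neat nu1 eqn} and \eqref{R-1nu eqn} goes through up to explicit error terms coming from the second fundamental form and from $R_{a44b}$. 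It is the size of those error terms, fed into a maximum-principle argument with auxiliary functions built from the potential (powers of $v$ and $\ln v$), that produces the specific decay exponent --- not any bookkeeping with $\mathcal R^\#$ on $\Lambda^2$.
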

Next, we shall prove an analogous result for steady solitons with proper potential function and linear scalar curvature decay.
\begin{Theorem}\label{lower bound for Rm in steady}
Let $(M^4, g, f)$ be a $4$-dimensional complete and non-Ricci-flat steady gradient Ricci soliton with proper potential function and linear scalar curvature decay, i.e., $\lim_{x\to\infty}f(x)=-\infty$ and $R\leq C/(r+1)$ for some constant $C$. Then
\be\label{steady Rm in r}
-Cr^{-3/2}\le \Rm\le C r^{-1},
\ee
where $r$ is the distance function from a fixed base point.
\end{Theorem}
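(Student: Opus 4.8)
The plan is to prove the two inequalities in \eqref{steady Rm in r} by separate routes: the upper bound follows from the pinching $|\Rm|\le cR$, while the lower bound requires a four-dimensional Hamilton--Ivey estimate applied to the eternal canonical form \eqref{canonicalform}, with the potential $f$ furnishing the dictionary between flow time and distance.

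First I would record the steady identities coming from \eqref{generalsoliton} with $\kappa=0$: tracing gives $\Delta f=-R$, and contracting the second Bianchi identity gives $R+|\na f|^2=C_0$ for a constant which I normalize to $C_0=1$. A non-Ricci-flat steady soliton is noncompact and its canonical form is eternal, hence ancient, so $R\ge0$ by \cite{Che09}. The hypotheses then interact as follows: from $0\le R\le C/(r+1)$ one gets $R\to0$ at infinity, so $|\na f|^2=1-R\to1$, and properness of $f$ forces $-f(x)$ to be comparable to $r(x)$ near infinity, i.e.\ $r\asymp -f$. This comparison is the decisive structural feature of the steady case: whereas in the shrinking setting of Theorem \ref{lower bound for Rm shrink} the potential grows quadratically, here it grows only linearly in $r$. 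The upper bound is now immediate, since $(M,g,f)$ is non-Ricci-flat with $\lim_{x\to\infty}R=0$: the estimate $|\Rm|\le cR$ of \cite{Cha19} applies (alternatively Theorem \ref{Rm by R}), and with $R\le C/(r+1)$ this yields $\Rm\le|\Rm|\le cR\le Cr^{-1}$, together with the crude lower bound $\Rm\ge -cR\ge -Cr^{-1}$.

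The core of the argument is to upgrade this crude bound to $\Rm\ge -Cr^{-3/2}$. Here I would pass to the eternal canonical form $g_t=\Phi_t^*g$ and run a four-dimensional Hamilton--Ivey estimate for the smallest eigenvalue $\nu$ of $\Rm$, in the spirit of \cite{MW15}. Along the flow lines of $\na f$ one has $\tfrac{d}{dt}f(\Phi_t(\cdot))=|\na f|^2\to1$, so the flow time needed to carry a point out to the level set $\{f=-s\}$ is comparable to $s$, hence to the distance by $r\asymp -f$; thus the Hamilton--Ivey ``time'' translates into a genuine power of $r$ rather than merely its logarithm. Feeding this time--distance correspondence and the decay $R\le C/(r+1)$ into the four-dimensional Hamilton--Ivey inequality, and letting the initial time tend to $-\infty$, I expect the negative part of $\Rm$ to be driven down only to the order $R^{3/2}$, that is $\Rm\ge -CR^{3/2}$; combined with $R\le C/(r+1)$ this gives the left-hand inequality of \eqref{steady Rm in r}.

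The hard part will be the four-dimensional Hamilton--Ivey estimate itself together with the bookkeeping of its scale. In dimension three the reaction term $\Rm^2+\Rm^{\#}$ forces the classical pinching that drives $\nu$ to $0$ along any ancient solution (Theorem \ref{HI est}); in dimension four the self-dual and anti-self-dual blocks are only weakly coupled, so the estimate cannot reach $\nu\ge0$, and quantifying exactly how far it does improve $\nu$ is where the exponent $3/2$---rather than the logarithmic gain of Theorem \ref{lower bound for Rm shrink}---must be extracted, with the linear growth of $f$ entering decisively. A secondary difficulty is that the relevant base points recede to spatial infinity as the initial time tends to $-\infty$, so the normalization in the maximum principle must be tied to the local curvature scale $R\asymp r^{-1}$ rather than to a fixed global bound, and the resulting rescaled time interval must be shown to remain long enough to yield the claimed gain.
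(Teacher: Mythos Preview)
Your treatment of the upper bound is correct and matches the paper's: $|\Rm|\le cR$ (from \cite{Cha19} or Theorem~\ref{Rm by R}) together with $R\le C/(r+1)$ gives $\Rm\le Cr^{-1}$.

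The lower bound argument, however, has a genuine gap. You propose to run a ``four-dimensional Hamilton--Ivey estimate'' on the canonical form, but no such estimate exists for the full curvature operator in dimension four --- as you yourself note, the self-dual and anti-self-dual blocks decouple. You cite \cite{MW15} as the model, but Munteanu--Wang's argument is \emph{not} a parabolic Hamilton--Ivey run on the Ricci flow; it is an elliptic maximum-principle argument on the soliton itself, and its key mechanism is \emph{dimension reduction to the level sets of $f$}. That mechanism is missing from your proposal. Concretely: the Ricci identity $R_{kjil}f_l = R_{ij,k}-R_{ik,j}$ shows that every curvature component involving the $\nabla f$ direction is controlled by $|\nabla\Ric|/|\nabla f|$, which by Shi's estimate is $O(r^{-3/2})$. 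This lets one replace the four-dimensional curvature algebra by the three-dimensional Hamilton--Ivey algebra on the tangent spaces of $\Sigma=\{f=-\tau\}$, at the cost of error terms of order $r^{-5/2}$ in the elliptic inequality $\Delta_f\nu_1\le -\nu_1^2-\nu_2\nu_3 + O(r^{-5/2})$ for the smallest eigenvalue $\nu_1$ of a suitable three-tensor on $\Sigma$.

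This is also where the exponent $3/2$ actually comes from, and your proposal does not derive it. In the paper the error $O(r^{-5/2})$ propagates to $O(R^{-1}r^{-5/2})\sim O(r^{-3/2})$ in the equation for $R^{-1}\nu_1$, and is absorbed by an auxiliary barrier $v^{-1/2}$ with $v=-f\sim r$; the maximum principle then yields $\nu_1\ge -CRv^{-1/2}\sim -Cr^{-3/2}$. Your heuristic that linear growth of $-f$ should convert the logarithmic gain of Theorem~\ref{lower bound for Rm shrink} into a power of $r$ is suggestive, but it does not predict $3/2$ rather than any other exponent, and the parabolic route you sketch gives no inequality to feed that heuristic into. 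The substance of the proof lies in the level-set reduction and the explicit tracking of the $O(r^{-3/2})$ error from Shi's estimate; without those ingredients the argument does not close.
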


The $\nu$-entropy of a Riemannian manifold was introduced by Perelman in \cite{Per02}:

\begin{align*}
    \nu =\nu(M,g)
    &:= \inf \left\{\bW(g,u,\tau):\tau>0,
    u\ge 0,\sqrt{u}\in C_0^\infty(M),
    \int_M u\, dg=1
    \right\},
\end{align*}
where $\bW$ is Perelman's $\mathcal W$-functional (\cite{Per02}) defined as 
\[
\bW(g,u,\tau)
    := \int_M \left(\tau\left( |\nabla\log u|^2 + R\right) - \log u\right)u\, dg - \frac{n}{2}\log(4\pi\tau)-n.
\]

Under the non-collapsing condition $\nu>-\infty$, the diameter estimate of the level set of $f$ in  \cite{DZ19, DZ20} and the argument in \cite{BCDMZ21} can be used to obtain the following result. The details of the proof are left to the readers.

\begin{Theorem}\label{sec>0 under nu bdd}
Let $(M^4, g, f)$ be a $4$-dimensional complete and non-Ricci-flat steady gradient Ricci soliton with proper potential function and linear scalar curvature decay, i.e., $R\leq C/(r+1)$ for some constant $C$. Suppose, in addition, that $\nu$ is bounded from below, i.e.,
\be\label{nu ent bdd}
\nu(M,g)>-\infty.
\ee
Then
the canonical form of $(M,g,f)$ has $(\mathbb{S}^3/\Gamma)\times \mathbb{R}$ as the tangent flow at infinity for some finite group $\Gamma$.
Thus, by \cite{BCDMZ21},
$(M,g)$ has positive curvature operator outside a compact set and $R\sim r^{-1}$ near infinity.
\end{Theorem}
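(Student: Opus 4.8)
The plan is to obtain the conclusion from a blow-down (tangent flow at infinity) analysis, using the quantitative pinching already recorded in Theorem \ref{lower bound for Rm in steady} as the main curvature input. First I would normalize the soliton identity to $R+|\na f|^2=1$. Since $R\le C/(r+1)\to 0$, this forces $|\na f|\to 1$, and as $f$ is proper it is an exhaustion with $-f\sim r$; recall also that $R\ge 0$ on any steady soliton, and $R>0$ everywhere by the non-Ricci-flat assumption and the strong maximum principle, so rescaling by $R$ is legitimate. Theorem \ref{lower bound for Rm in steady} applies verbatim under our hypotheses and gives $-Cr^{-3/2}\le \Rm\le Cr^{-1}$, so $(M,g)$ has bounded curvature with linear decay. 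Finally $\nu(M,g)>-\infty$ makes the canonical form $\kappa$-noncollapsed at all scales and bounds Perelman's entropy uniformly along the eternal flow.

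Next I would extract the tangent flow at infinity. Fix $p_i\to\infty$ with $r_i:=r(p_i)\to\infty$ and parabolically rescale the canonical form by the curvature scale $R(p_i)^{-1}\sim r_i$, normalizing the rescaled scalar curvature to equal $1$ at $p_i$. The uniform $\kappa$-noncollapsing together with the entropy bound place us in the setting of Bamler's compactness theory, so along a subsequence the rescaled flows $\mathbb{F}$-converge to a tangent flow at infinity which, by entropy monotonicity, is a nontrivial gradient shrinking soliton $(M_\infty,g_\infty,f_\infty)$; it is nontrivial because the rescaled scalar curvature is normalized to $1$ at the base point. The crucial gain from Theorem \ref{lower bound for Rm in steady} is that the pinching lower bound rescales to $\Rm\ge -C\,R(p_i)\,r_i^{-3/2}\to 0$, whence the limit satisfies $\Rm_\infty\ge 0$.

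It then remains to identify this nonnegatively curved $4$-dimensional shrinker as the round cylinder. Here the level-set diameter estimate of Deng--Zhu \cite{DZ19,DZ20} enters: it bounds the intrinsic diameter of $\{f=-s\}$ in terms of the curvature scale, so after rescaling the level sets through $p_i$ have uniformly bounded diameter and, by noncollapsing, nondegenerate geometry, forcing the limiting cross-section to be a compact $3$-manifold. The gradient trajectories of $f$, whose speed tends to $1$ and whose geodesic curvature $-\tfrac12\na R$ rescales to $0$, converge to a line in $M_\infty$, so the Cheeger--Gromoll splitting theorem (valid since $\Ric_\infty\ge 0$) yields an isometric, and hence soliton, splitting $M_\infty=N^3\times\mathbb{R}$ with $N^3$ a \emph{compact} $3$-dimensional shrinking soliton of nonnegative curvature. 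By the classification of $3$-dimensional shrinkers recalled in the introduction, the only compact model is a quotient of the round sphere, so $N^3=\mathbb{S}^3/\Gamma$ and the tangent flow at infinity is the round shrinking cylinder $(\mathbb{S}^3/\Gamma)\times\mathbb{R}$. Feeding this back into the argument of \cite{BCDMZ21} gives positive curvature operator outside a compact set and $R\sim r^{-1}$ near infinity.

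I expect the main obstacle to be the splitting and identification step, rather than the blow-down itself: one must show that \emph{exactly one} direction, that of $\na f$, degenerates in the limit, so that the cross-section remains compact and round and the tangent flow is genuinely $(\mathbb{S}^3/\Gamma)\times\mathbb{R}$ rather than $\mathbb{R}^2\times\mathbb{S}^2$, a flat $\mathbb{R}^4$, or a collapsed model. This is precisely where the Deng--Zhu diameter estimate, the noncollapsing from $\nu>-\infty$, and the tangent-flow framework of \cite{BCDMZ21} must be combined carefully, as the statement indicates; the remaining computations are routine and are omitted.
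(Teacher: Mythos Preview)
Your proposal is correct and follows the same outline the paper indicates: the paper itself gives no detailed proof of this theorem, only the one-sentence hint that ``the diameter estimate of the level set of $f$ in \cite{DZ19, DZ20} and the argument in \cite{BCDMZ21}'' suffice, with details left to the reader; your blow-down argument, use of the Deng--Zhu level-set diameter bound to force compactness of the cross-section, identification of the compact $3$-dimensional shrinker factor as $\mathbb{S}^3/\Gamma$, and appeal to \cite{BCDMZ21} for the final conclusions are exactly what that hint asks for. Your additional invocation of Theorem~\ref{lower bound for Rm in steady} to obtain $\Rm_\infty\ge 0$ on the limit is a natural and legitimate input from the same paper (and makes the splitting/classification step cleaner), though the paper's sketch does not explicitly list it among the ingredients.
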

\begin{Remark}
By \cite[Theorem 1.1]{CMZ21b} or \cite[Theorem 1.14]{CMZ21c}, it can be seen that the same conclusion in Theorem \ref{sec>0 under nu bdd} holds if the condition \eqref{nu ent bdd} is replaced by \emph{bounded Nash entropy} on the canonical form of the steady soliton.
\end{Remark}

Under the Riemann curvature linear decay assumption, the condition $\Ric\ge 0$ near infinity, and the $\kappa$-noncollapsing condition, Deng-Zhu proved a dimension reduction principle for steady solitons \cite{DZ18, DZ19, DZ20}, namely, that the rescaled limits of the canonical form of the soliton at spatial infinity split like $\left(\mathbb{R}\times P^{n-1}, dr^2+g_P(t)\right)$, where $g_P(t)$ is a Type I noncollapsed ancient Ricci flow on the compact manifold $P$. Moreover, when $n=4$ and ${\Ric}>0$, they \cite{DZ18} showed that $(P, g_P(t))$ is the shrinking sphere and the steady soliton have positive sectional curvature outside compact subset (see also \cite{CDM20, BCDMZ21}). Under the same assumptions as Theorem \ref{lower bound for Rm in steady}, the first-named author and Zhu \cite{CZh21} applied Deng-Zhu's method to prove a dichotomy for the asymptotic geometry at the spatial infinity of a steady soliton without volume noncollapsing condition. As an application of Theorem \ref{lower bound for Rm in steady}, we have a slightly more precise description on the asymptotic limits in dimension four. We refer the reader to \cite{CZh21} for the definition of being smoothly asymptotic to a cylinder at the exponential rate.

\begin{Corollary}\label{GH limit}
Under the conditions in Theorem \ref{lower bound for Rm in steady}, either one of the following holds:
\begin{enumerate}
 \item For any sequence $p_i\to \infty$ in $M^4$, after passing to a subsequence, $(M^4, d_{R(p_i)g}, p_i)$ converges in the pointed Gromov-Hausdorff sense to a cylinder $\left(\mathbb{R}\times Y, \sqrt{d_e^2+d_
   Y^2}, p_{\infty}\right)$, where $d_e$ is the flat metric on $\mathbb{R}$, $(Y,d_Y)$ denotes a compact Alexandrov space with nonnegative curvature and Hausdorff dimension $\leq 3$, 
   and $\sqrt{d_e^2+d_
   Y^2}$ is the product metric.
   \item For any sequence $p_i\to \infty$ in $M^4$, $(M^4, d_{R(p_i)g}, p_i)$ converges in the pointed Gromov-Hausdorff sense (without passing to subsequence) to the ray $([0,\infty), d_e, 0)$,  where $d_e$ is the flat metric restricted on $[0, \infty)$. In this case, $(M,g)$ is smoothly asymptotic to the flat cylinder $\mathbb{R}\times\left(\mathbb{T}^3\big/\sim\right)$ 
   at exponential rate.
   \end{enumerate}
\end{Corollary}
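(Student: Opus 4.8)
\emph{Strategy.} The plan is to read off the two alternatives from the dichotomy for the asymptotic geometry of steady solitons proved by the first-named author and Zhu in \cite{CZh21}, and to use the two-sided estimate $-Cr^{-3/2}\le\Rm\le Cr^{-1}$ of Theorem \ref{lower bound for Rm in steady} to make the resulting limits precise. Recall first that, since $(M,g,f)$ is steady and non-Ricci-flat, $R>0$ on all of $M$, so the rescaled distances $d_{R(p_i)g}=\sqrt{R(p_i)}\,d_g$ are well defined; and that the estimate of Theorem \ref{lower bound for Rm in steady} gives $|\Rm|\le Cr^{-1}$ for $r\ge1$. According to \cite{CZh21}, exactly one of two scenarios occurs at spatial infinity: the \emph{cylindrical} scenario, in which $R\asymp r^{-1}$ and rescaled limits split off a line, and the \emph{flat-cylinder} scenario, in which $(M,g)$ is smoothly asymptotic to $\mathbb{R}\times(\mathbb{T}^3/\sim)$ at exponential rate. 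I would prove item (1) in the first scenario and item (2) in the second.

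\emph{The cylindrical scenario.} Here $R\asymp r^{-1}$, so for any $p_i\to\infty$ a $\tilde g_i$-ball ($\tilde g_i:=R(p_i)g$) of fixed radius is a $g$-ball of radius $\asymp r(p_i)^{1/2}=o(r(p_i))$, on which $r\asymp r(p_i)$. The rescaled curvature operator then satisfies
\[
|\widetilde{\Rm}_i|=\frac{|\Rm|}{R(p_i)}\le \frac{Cr^{-1}}{R(p_i)}\le C,
\qquad
\widetilde{\Rm}_i\ge -\frac{Cr^{-3/2}}{R(p_i)}\ge -C\,r(p_i)^{-1/2}\longrightarrow0.
\]
The first inequality yields local boundedness, hence a uniform local lower Ricci bound, so by Gromov's precompactness theorem $(M,d_{\tilde g_i},p_i)$ subconverges in the pointed Gromov--Hausdorff sense; the second inequality forces the limit to be an Alexandrov space of nonnegative curvature. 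The line provided by the Deng--Zhu/\cite{CZh21} dimension reduction (the rescaled $\na f$-axis), together with the Toponogov splitting theorem, then exhibits the limit as an isometric product $\mathbb{R}\times Y$ with flat factor $d_e$. Finally $Y$ is compact because $f$ is proper (its level sets are compact), it has nonnegative curvature as a factor of a nonnegatively curved space, and $\dim_H Y\le 3$ since $Y$ is a (possibly collapsed) limit of the $3$-dimensional level sets. This is item (1).

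\emph{The flat-cylinder scenario.} By \cite{CZh21}, $(M,g)$ is smoothly asymptotic to $\mathbb{R}\times(\mathbb{T}^3/\sim)$ at exponential rate, so $R(p_i)\to0$ exponentially in $r(p_i)$ and the cross-sections have bounded diameter. Rescaling by $R(p_i)$ multiplies all distances by $\sqrt{R(p_i)}\to0$, so the bounded cross-sections collapse to a point; moreover the compact core, at $g$-distance $r(p_i)$ but $\tilde g_i$-distance $\sqrt{R(p_i)}\,r(p_i)\to0$, collapses onto the basepoint, leaving only the single radial direction at unit scale. Hence $(M,d_{R(p_i)g},p_i)$ converges, with no subsequence needed, to the ray $([0,\infty),d_e,0)$, the origin being the image of the collapsed core. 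This is item (2).

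\emph{Main obstacle.} The crux lies entirely in the cylindrical scenario: one must know that along every sequence $r(p_i)^{3/2}R(p_i)\to\infty$ (equivalently $R\asymp r^{-1}$), for only then does the rescaled lower bound from Theorem \ref{lower bound for Rm in steady} tend to $0$ and yield nonnegativity of the limit. This is precisely the point at which the flat-cylinder (exponentially decaying) case must be excluded, which is why I would take the dichotomy of \cite{CZh21} as the organizing principle and invoke Theorem \ref{lower bound for Rm in steady} only to supply nonnegative curvature in the limit. The remaining steps---upgrading the length-space splitting of \cite{CZh21} to a product of Alexandrov spaces and the dimension bookkeeping for $Y$---are then standard facts from Alexandrov geometry and Cheeger--Gromov theory.
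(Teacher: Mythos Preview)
Your proposal is correct and follows the same overall strategy as the paper: both invoke the dichotomy of \cite{CZh21} (Theorem~1.3 and Proposition~6.1 there) to dispose of the exponential-decay case~(2) outright, and in the linear-decay case $R\asymp r^{-1}$ both feed the lower bound $\Rm\ge -Cr^{-3/2}$ of Theorem~\ref{lower bound for Rm in steady} into the rescaled geometry to force nonnegative curvature of the limit. The one difference is organizational. The paper works with the level sets $\Sigma_i=\{f=f(p_i)\}$: it applies the Gauss equation (the correction term $\Ric\ast\Ric/|\nabla f|^2$ being $O(r^{-2})$) to transfer the ambient bound to the \emph{intrinsic} curvature of $(\Sigma_i,R(p_i)\tilde g_i)$, obtaining $\Rm(h_i)\ge -C\tau_i^{-1/2}\to 0$, and then takes $Y$ directly as the Gromov limit of these compact $3$-manifolds, with the product structure $\mathbb{R}\times Y$ coming from \cite[Proposition~6.1]{CZh21}. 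You instead keep the curvature bounds on the ambient rescaled $4$-manifold $(M,R(p_i)g)$, pass to a nonnegatively curved Alexandrov limit, and then split off the $\nabla f$-line via Toponogov. Both routes rest on the same inputs from \cite{CZh21} (the diameter bound on rescaled level sets and the line along the $\nabla f$-axis); the paper's version has the minor advantage that compactness and the dimension bound $\dim_H Y\le 3$ are immediate, while yours avoids the Gauss-equation bookkeeping.
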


\begin{Remark}
It is unclear at this point whether or not the Alexandrov space $(Y,d_Y)$ in Corollary \ref{GH limit} has nonempty boundary. The uniqueness of $(Y,d_Y)$ is also unknown. If a $4$-dimensional non-Ricci-flat steady gradient Ricci soliton is $\kappa$-noncollapsed, satisfies $\Ric\ge 0$ outside a compact set, and satisfies $|{\Rm}|\le C/(r+1)$, then Deng-Zhu \cite{DZ18} proved that the level sets near infinity are diffeomorphic to a quotient of $\mathbb{S}^3$. Under the assumptions in Corollary \ref{GH limit}, it follows from Theorem \ref{lower bound for Rm in steady}, \cite[Corollary 1.6]{CZh21} and \eqref{level set non-ve curvature} that the level sets $\Sigma$ of $f$ have almost nonnegative sectional curvature near infinity. Thus, by \cite{FY92, Per02, Per03a, Per03b}, the level set $\Sigma$ of $f$ is either homeomorphic to a quotient of $\mathbb{T}^3$, $\mathbb{S}^3$, $\mathbb{S}^1\times \mathbb{S}^2$, or a nilmanifold (see \cite{DZ18} for result in higher dimensions under the noncollapsing condition).
\end{Remark}

As mentioned above, motivated by Theorem \ref{HI est},  one may wonder if a $3$-dimensional expanding gradient Ricci soliton must be nonnegatively curved. Bryant constructed a family of $3$-dimensional negatively curved complete rotationally symmetric gradient expanders on $\mathbb{R}^3$ (see \cite{RFV1}). However, we will show that the sectional curvature of a $3$-dimensional expanding gradient soliton is positive if the scalar curvature satisfies certain decay conditions outside a compact set. Let $h$ be any function on a noncompact manifold $M$ and $\alpha \in \mathbb{R}$ be a positive constant. We say that $h=o(r^{-\alpha})$ if $\lim_{x\to\infty} r^{\alpha}h=0$,  $h=O(r^{-\alpha})$ if $\limsup_{x\to\infty} r^{\alpha}|h|<\infty$, where $r$ is the distance function based at a fixed point.


\begin{Theorem}\label{sec>0}
Let $(M^3, g, f)$ be a $3$-dimensional complete and noncompact expanding gradient Ricci soliton. Assume that there exist nonnegative functions $h_1=o(r^{-2})$ and $h_2=o(r^{-1})$ near infinity such that
\be\label{R decay}
-h_1\leq R\leq h_2
\ee
outside a compact set of $M$. Then $M$ has positive sectional curvature everywhere unless it is flat.
\end{Theorem}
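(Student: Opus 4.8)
The plan is to reduce everything to the nonnegativity of the curvature operator, $\Rm\ge 0$, and then to invoke the strong maximum principle. After normalizing $\kappa=-1$, I would first record the drift-Laplacian equations satisfied by a gradient soliton, namely $\D R=-R-2|\Ric|^2$ and $\D\Rm=-\Rm-(\Rm^2+\Rm^{\#})$, where $\D=\Delta_f$. In dimension three the curvature operator is diagonalizable with eigenvalues $\lambda\ge\mu\ge\nu$ equal to the three principal sectional curvatures, and the second equation yields, in the barrier sense, $\D\nu\le -\nu-(\nu^2+\lambda\mu)$ for the least eigenvalue. The endgame is then clean: once $\nu\ge 0$ is known, either $\Rm>0$ everywhere, or $\Rm$ has a nontrivial kernel at some point; by Hamilton's strong maximum principle for the curvature operator (in its elliptic, drift form) the kernel is parallel and the soliton locally splits off a line, reducing the problem to a complete two-dimensional gradient expander of nonnegative curvature with $R\to 0$, which must be flat. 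Thus the whole difficulty is concentrated in proving $\nu\ge 0$.

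For the nonnegativity I would pass to the canonical form, an immortal Ricci flow on $M\times(-1,\infty)$, and apply a Hamilton-Ivey pinching in the spirit of Theorem \ref{HI est}. Because $R$ is bounded (it lies between two functions tending to $0$), the pinching forces a two-sided curvature bound, so the flow has bounded curvature on compact time intervals and the maximum principle applies. The essential observation is that the decay hypotheses are exactly calibrated to the cone scale: rescaling $g$ by $r(p_i)^{-2}$ along any sequence $p_i\to\infty$ and using $R\le h_2=o(r^{-1})$ together with $R\ge -h_1=-o(r^{-2})$, one expects any blow-down to be a metric cone $C(N^2)$ over a surface, and the lower bound $\liminf r^2R\ge 0$ translates into $R_{C}\ge 0$. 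For a three-dimensional cone over a surface this is equivalent to $\mathrm{sec}_N\ge 1$, i.e.\ the asymptotic cone has nonnegative curvature operator; quantitatively this should give $\nu\ge -o(r^{-2})$ near infinity, an asymptotic nonnegativity with a definite rate.

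It remains to upgrade this asymptotic nonnegativity to the global statement $\nu\ge 0$, and this is the step I expect to be the main obstacle. The naive interior minimum principle is not enough: at an interior minimum of $\nu$ with value $\nu_0<0$ and eigenvector $v$, the barrier inequality only gives $\lambda\mu\le -\nu_0(1+\nu_0)$, which (when the remaining eigenvalues are nonnegative) yields merely $\nu_0\ge -1$ — precisely the Hamilton-Ivey normalization, not positivity. To close the gap I would view the canonical form as a Ricci flow emerging from the nonnegatively curved asymptotic cone as $t\to -1^+$ and propagate nonnegativity of $\Rm$ forward in time by the maximum principle, using the Hamilton-Ivey estimate near the singular initial time to license the argument where the curvature concentrates. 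The delicate points are: establishing that the blow-down is genuinely a cone (rather than a lower-dimensional collapse) under a purely scalar decay assumption, ruling out the bad case of two negative eigenvalues at an interior minimum, and ensuring the maximum principle is valid up to the singular time. I expect the precise exponents $-2$ and $-1$ in $h_1$ and $h_2$ — the former matching the cone curvature scale $r^{-2}$ and the latter guaranteeing sufficiently fast decay for the blow-down analysis — to be exactly what makes each of these steps go through.
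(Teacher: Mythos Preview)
Your setup and endgame are both correct and match the paper: the barrier inequality $\D\nu\le -\nu-(\nu^2+\lambda\mu)$ is equivalent (up to a factor of $2$) to the paper's inequality \eqref{neat nu1 eqn} for the smallest eigenvalue of $T=Rg-2\Ric$, the asymptotic estimate $\nu\ge -o(r^{-2})$ is exactly what the paper obtains in \eqref{bdry cond of r2nu}, and the strict-positivity argument via the strong maximum principle and a splitting contradiction is the paper's too.

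The genuine gap is the middle step---upgrading $\nu\ge -o(r^{-2})$ to $\nu\ge 0$---and your proposed route does not close it. Passing to the canonical form and trying to propagate nonnegativity forward from the ``initial cone'' at $t\to -1^+$ requires results you do not have: you would need to know that the blow-down is a cone (you flag this yourself), that the immortal flow attains that cone as initial data in a sense strong enough for a maximum principle, and that Ricci flow out of such a cone is unique, none of which follows from the scalar hypotheses. You also correctly observe that the naive interior minimum argument on $\nu$ alone gives only $\nu\ge -1$.

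The paper avoids all of this with a purely elliptic trick you are missing: divide by $R$. After showing $R>0$ (from $R\ge -o(r^{-2})$ and a known lower bound), one computes that the normalized quantity $R^{-1}\nu_1$ satisfies the \emph{clean} inequality
\[
\Delta_{f-2\ln R}\bigl(R^{-1}\nu_1\bigr)\le -R^{-2}\bigl[\nu_2^2(\nu_3-\nu_1)+\nu_3^2(\nu_2-\nu_1)\bigr]\le 0,
\]
with no inhomogeneous term---this is \eqref{R-1nu eqn}. If $R^{-1}\nu_1$ attains a negative interior minimum, the right-hand side is strictly negative there (one checks $\nu_3>0$ forces $\nu_1=\nu_2$ and then a contradiction), so the whole problem reduces to forcing the minimum to be attained. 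The paper does this by first proving $|\Rm|\le CR$ via a Deruelle-type asymptotic estimate (Lemma~\ref{TR at infty}), so that $R^{-1}\nu_1$ is bounded, and then constructing an explicit barrier $e^{av^{-1/2}}v^{-1}R^{-1}$ (with $v=\tfrac n2-f$) satisfying $\Delta_{f-2\ln R}(\cdot)<0$ outside a compact set; adding $k^{-1}$ times this barrier traps the minimum in a fixed compact set, and letting $k\to\infty$ finishes. The normalization by $R$ is the key idea that your parabolic outline does not reach.
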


\begin{Remark} In view of the Bryant expanding solitons with negative curvature \cite{RFV1}, we see that the lower bound $h_1=o(r^{-2})$ in \eqref{R decay} is sharp and cannot be relaxed. Theorem \ref{sec>0} also fails in higher dimensions, since the expanding K\"{a}hler Ricci soliton constructed by Feldman-Ilmanen-Knopf \cite{FIK03} with complex dimension $m\geq 2$ has mixed Ricci curvature signs and positive scalar curvature which decays exponentially in $r$.
\end{Remark}
It can also be seen from the proof of Theorem \ref{sec>0} that any $3$-dimensional complete and noncompact expanding gradient Ricci soliton with nonnegative sectional curvature outside a compact set has nonnegative sectional curvature everywhere (see Remark \ref{sec remark}). The same holds if one replaces the sectional curvature by the Ricci curvature (see Remark \ref{Ric remark}).

A special phenomenon arising from $3$-dimensional Ricci expanders is that there exist asymptotically conical examples; the same is not true for $3$-dimensional steadies and shrinkers,  though higher dimensional asymptotically conical examples can be found in \cite{FIK03, AK19}. With the help of this extra structure at infinity, it is slightly more tractable to classify $3$-dimensional asymptotically conical expanders according to their asymptotic cones.  We first recall the definition of asymptotically conical expanding soliton. Let $X$ be a smooth $(n-1)$-dimensional closed manifold with Riemannian metric $g_X$. We shall denote by $C(X)$ the cone over $X$, i.e. $\{(t,\omega):$  $t>0, \omega \in X\}$. Let $g_C$ and $\na_C$ be the metric $dt^2+t^2g_X$ on $C(X)$ and its Levi-Civita connection, respectively. For any positive constant $S_0$, $\overline{B(o, S_0)}\subseteq C(X)$ is the subset given by $\{(t,\omega):$  $S_0\geq t>0,\, \omega \in X\}$.
$X$ is also called \emph{the link} of the cone $C(X)$. 

\begin{Definition} \label{pAC expander} \cite[Definition 1.1]{Cho14} Let $k \in \mathbb{N}\cup\{\infty\}$. A complete noncompact expanding gradient Ricci soliton $(M, g, f)$ is $C^k$-asymptotic to the cone $(C(X), g_C)$, if there exist constants $\varepsilon>0, S_0>0$, and $c_0$, a compact set $K\subset M$, and a diffeomorphism $\phi:$   $C(X)\setminus \overline{B(o, S_0)}\longrightarrow M\setminus K$, such that for any $l=0, 1,2, \cdots, k$, there is a constant $C_l>0$ such that for all $t> S_0$
\begin{align}\label{cone derivative estimates for metric}
\sup_{\omega\,\in X}\left|\na_C^l \left(\phi^*g-g_C\right)\right|_{g_C}(t,\omega)\le C_l\, t^{-3\varepsilon-l} ;
\\
f\circ \phi(t,\omega)=-\frac{t^2}{4}+c_0 \quad\text{  and   }\quad \frac{2}{t}\frac{\partial \phi}{\partial t}=-\frac{\nabla f}{|\nabla f|^2}.\label{f in cone}
\end{align}
\end{Definition}
\begin{Remark}
Note that the notion of conical expander in Definition \ref{pAC expander} is slightly more restrictive than the one in \cite[Definition 1.1]{Cho14}, as here $\left|\na_C^l \left(\phi^*g-g_C\right)\right|_{g_C}$ is also assumed to be bounded for $t$ close to $S_0$ in \eqref{cone derivative estimates for metric}. Hence the classification result in \cite{Cho14} also holds for conical expanding solitons satisfying Definition \ref{pAC expander}. Moreover, Definition \ref{pAC expander} and the corresponding definition in $\cite{Cho14}$ are equivalent if $f$ is proper, i.e., $\lim_{x\to\infty}f(x)=-\infty$, which holds in particular when $\Ric\geq (\varepsilon_0-1/2)g$ outside a compact set for some positive constant $\varepsilon_0$ (see \eqref{lower bdd for -f}). For general gradient expanding solitons, $f$ may not be proper and $M$ can have two ends (see \cite{R13, BM15}).
\end{Remark}


Cheeger-Colding (see \cite{CC96} and the references therein) showed that the tangent cones at infinity of a complete noncompact manifold with $\Ric\ge 0$ and maximal volume growth are metric cones. In the case of expanding solitons, some sufficient conditions for the existence of an asymptotic metric cone were given in \cite{CD15,Der17,CL21}: if on an expander it holds that $r^2|{\Rm}|$ is bounded, then one can always find an asymptotic metric cone at infinity in the Gromov-Hausdorff sense  (c.f. \cite{CD15, CL21}). Moreover, the asymptotic convergence can be made to be $C^{\infty}$ (as in Definition \ref{pAC expander}), if $r^{2+k}|\nabla^k \Ric|$ is also bounded for all integer $k\ge 0$ \cite{Der17}.

Given a cone $C(X)$, it has long been an interesting problem to find expanding solitons asymptotic to (smoothing out) $C(X)$ and to establish the uniqueness of these expanders under different conditions (\cite{SS13, Sie13, Der16, CDS19, CoD20}).  
By constructing Killing vector fields as in \cite{Br13}, Chodosh \cite{Cho14} proved the following uniqueness result for asymptotically conical expanding soliton (see also \cite{CCCMM14} for classification result under the Bach flat condition).
\begin{Theorem}\cite{Cho14} \label{Cho result} Let $n\geq 3$ be an integer. If $(M^n, g, f)$ is an $n$-dimensional complete expanding gradient Ricci soliton which has positive sectional curvature and is $C^2$-asymptotic to the cone $\left(C(\mathbb{S}^{n-1}), dt^2+\alpha t^2g_{\mathbb{S}^{n-1}}\right)$, where $\alpha \in (0,1]$ is a constant, and $g_{\mathbb{S}^{n-1}}$ is the standard metric on $\mathbb{S}^{n-1}$ with constant curvature $1$. Then M is rotationally symmetric.  
\end{Theorem}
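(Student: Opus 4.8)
The plan is to adapt the Killing-vector-field method of Brendle \cite{Br13} in the form developed by Chodosh \cite{Cho14}: rather than trying to exhibit the rotational symmetry directly, I would produce enough global Killing fields on $(M,g)$ to generate an isometric action of $SO(n)$. The asymptotic cone $\left(C(\mathbb{S}^{n-1}), dt^2+\a t^2 g_{\mathbb{S}^{n-1}}\right)$ is rotationally symmetric, so its isometry group contains $SO(n)$ acting by rotations of the link $\mathbb{S}^{n-1}$; this yields $\binom{n}{2}$ linearly independent Killing fields $\bar V$ on the cone. Via the diffeomorphism $\phi$ of Definition \ref{pAC expander}, each $\bar V$ pulls back to a vector field on $M\setminus K$ that is \emph{approximately} Killing, the error being governed by the decay $|\na_C^l(\phi^*g-g_C)|_{g_C}\le C_l t^{-3\varepsilon-l}$. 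The goal is thus to correct each of these to an honest Killing field on all of $M$.

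First I would set up the relevant elliptic problem. A vector field $U$ is Killing exactly when its deformation tensor $h:=\mathcal L_U g$, with $h_{ij}=\na_i U_j+\na_j U_i$, vanishes. The natural operator to invert is the drift ``Killing operator'' $\D U+\Ric(U)=0$, where $\D=\Delta-\na_{\na f}$ is the weighted Laplacian; this is the linearization that interacts well with the soliton identity $\Ric+\na^2 f=-\tfrac12 g$. Using the conical coordinates near infinity I would expand this equation, separate variables along the link, and solve it in a weighted function space with the cone Killing field $\bar V$ prescribed as the asymptotic data. Properness of $f$ (the $-t^2/4$ profile) makes $\D$ strongly coercive at infinity and, together with positive sectional curvature, furnishes the decay and invertibility needed to produce a unique solution $U$ asymptotic to $\bar V$.

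The crucial step is to show that this $U$ is genuinely Killing, i.e.\ that $h=\mathcal L_U g\equiv 0$. A Bochner computation, combining $\D U+\Ric(U)=0$ with the soliton equation, shows that $h$ satisfies a Lichnerowicz-type equation of the schematic form $\D h=-2\,\R(h)$, where $\R(h)$ denotes the action of the curvature operator on symmetric $2$-tensors. Since $U$ matches the \emph{exactly} Killing field $\bar V$ of the cone to leading order, $h$ decays at infinity. Then positivity of the curvature operator, fed into a maximum-principle argument for the weighted equation satisfied by $|h|^2$, forces $h\equiv 0$. This is precisely where the hypothesis of positive sectional curvature is indispensable, exactly as in Brendle's argument.

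Finally, having produced $\binom{n}{2}$ independent Killing fields whose leading behavior reproduces the rotation fields of the link, I would check that they close up into an isometric $SO(n)$-action; since $M$ is connected and this action has the spheres $\{t\}\times\mathbb{S}^{n-1}$ as asymptotic orbits, it follows that $(M,g)$ is rotationally symmetric. I expect the main obstacle to be the combination of the second and third steps: solving the elliptic problem in the correct weighted space with the prescribed conical asymptotics, and then running the maximum principle on the Lichnerowicz equation for $h$ with only the positivity of curvature and the decay of $h$ at infinity available. Controlling the interplay between the $t^{-3\varepsilon}$ error terms coming from the cone approximation and the coercivity supplied by $f$ is the delicate technical heart of the proof.
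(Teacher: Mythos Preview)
This theorem is not proved in the present paper: it is quoted verbatim from Chodosh \cite{Cho14} (note the citation tag in the theorem header) and is only invoked as a black box in the proof of Corollary \ref{rot sym}. There is therefore no ``paper's own proof'' to compare against.

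That said, your outline is a faithful sketch of the Brendle--Chodosh strategy the paper is referencing: construct approximate Killing fields from the rotational symmetries of the asymptotic cone, correct them to solutions of the drift Killing equation, and use positive sectional curvature together with a Lichnerowicz-type maximum principle to show the deformation tensor vanishes. The paper's own contribution (Theorem \ref{sec>0} and Corollary \ref{cone sec>0}) is precisely to supply the positive-curvature hypothesis in dimension three from the asymptotic-cone data alone, after which Chodosh's theorem is applied directly.
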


The uniqueness of $C^2$-asymptotically conical expanding K\"{a}hler soliton with positive bisectional curvature was also shown by Chodosh-Fong \cite{CF16}. The proofs in \cite{Br14},  
as well as the ones in \cite{Cho14,CF16}, require the information on the sectional (or bisectional) curvature for a maximum principle argument showing a Liouville-type theorem for a Lichnerowicz PDE. The existence and uniqueness of expander with prescribed asymptotic cone has been studied by Deruelle \cite{Der16} in a more general setting. Namely if $(X^{n-1},g_X)$ is a simply connected closed manifold with 
\be\label{+ve cond}
\Rm(g_X)>\text{id}_{\Lambda^2 TX},
\ee
then there exists a positively curved gradient Ricci expander which is $C^{\infty}$-asymptotic to $C(X)$ \cite{Der16}. Moreover, two gradient expanders that satisfy $\Rm>0$ and are $C^{\infty}$-asymptotic to $C(X)$ must be isometric to each other \cite{Der16}. It is natural to ask whether the uniqueness result holds within larger category of expanding solitons (see Remark \ref{cone isom CX} below). It follows from \eqref{cone derivative estimates for metric} that the decay condition \eqref{R decay} is fulfilled by those 
conical gradient expanders with link $X$  satisfying \eqref{+ve cond} (or more generally $\Rm(g_X)\ge\text{id}_{\Lambda^2 TX}$). Consequently, we have the following corollary.

\begin{Corollary}\label{cone sec>0} Suppose that $(M^3, g, f)$ is a $3$-dimensional complete non-compact expanding gradient Ricci soliton which is 
$C^2$-asymptotic to a cone $\left(C(X^2), dt^2+ t^2g_X\right)$, where $(X^2, g_X)$ is a connected and closed $2$-dimensional Riemannian manifold. 
Then the following are equivalent:
\begin{enumerate}
    \item $M$ has nonnegative sectional curvature;
       \item $M$ has nonnegative scalar curvature;
     \item $C(X)$ has nonnegative scalar curvature;
      \item $\Rm(g_X)\ge {\rm id}_{\Lambda^2 TX}$.
\end{enumerate}
\end{Corollary}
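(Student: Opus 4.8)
The plan is to establish the cycle (1) $\Rightarrow$ (2) $\Rightarrow$ (4) $\Rightarrow$ (1) together with the direct equivalence (3) $\Leftrightarrow$ (4), which together show all four statements are equivalent.

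Two of the links are essentially formal. The implication (1) $\Rightarrow$ (2) is immediate, since in dimension three the scalar curvature is the sum of the three sectional curvatures. For (3) $\Leftrightarrow$ (4) I would compute directly on the model: for the cone $g_C = dt^2 + t^2 g_X$ over a surface one has $R_{g_C} = t^{-2}(R_{g_X} - 2) = 2t^{-2}(K_X - 1)$, where $K_X$ denotes the Gauss curvature of $(X, g_X)$. As $\Lambda^2 TX$ is one-dimensional, the condition $\Rm(g_X) \ge \mathrm{id}_{\Lambda^2 TX}$ is exactly $K_X \ge 1$; hence $C(X)$ has nonnegative scalar curvature if and only if $K_X \ge 1$, which is (4).

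The analytic heart of the argument is a comparison of $R$ near infinity with the scalar curvature of the model cone. Writing $h := \phi^* g - g_C$ and using the $C^2$-asymptotic bounds \eqref{cone derivative estimates for metric}, I would apply the linearization of scalar curvature,
\[
DR_{g_C}[h] = -\Delta_{g_C}\!\left(\tr_{g_C} h\right) + \div\,\div\,h - \langle \Ric_{g_C}, h\rangle_{g_C},
\]
and estimate each term. Since $|\na_C^l h|_{g_C} \le C_l\, t^{-3\varepsilon - l}$ and $\Ric_{g_C} = O(t^{-2})$, every term is $O(t^{-3\varepsilon - 2})$, and the quadratic remainder is of lower order, so
\[
R_g(\phi(t,\omega)) = \frac{2\left(K_X(\omega) - 1\right)}{t^2} + O\!\left(t^{-3\varepsilon - 2}\right).
\]
Because $(M, g)$ is asymptotically conical, the distance $r$ is comparable to the cone parameter $t$ near infinity. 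This yields (2) $\Rightarrow$ (4): if $R \ge 0$ on $M$, then $2(K_X(\omega) - 1) = \lim_{t \to \infty} t^2 R_g(\phi(t, \omega)) \ge 0$ for each $\omega$, so $K_X \ge 1$.

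For the main implication (4) $\Rightarrow$ (1) I would feed the above expansion into Theorem \ref{sec>0}. Assuming $K_X \ge 1$, boundedness of $K_X$ on the compact link gives the upper bound $R \le C r^{-2}$, while the hypothesis $K_X \ge 1$ lets me discard the nonnegative leading term to obtain the lower bound $R \ge -C r^{-3\varepsilon - 2}$. Thus the assumptions of Theorem \ref{sec>0} hold with $h_2 = C r^{-2} = o(r^{-1})$ and $h_1 = C r^{-3\varepsilon - 2} = o(r^{-2})$, the latter precisely because $\varepsilon > 0$. Theorem \ref{sec>0} then forces $M$ to have positive sectional curvature unless it is flat, and in either case (1) holds, closing the cycle. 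I expect the main obstacle to be the lower curvature bound: the upper bound needs only $K_X$ bounded, but the lower bound requires both the sign hypothesis $K_X \ge 1$ to annihilate the $t^{-2}$ term and the strict improvement of the error to $t^{-3\varepsilon - 2}$, which is what makes $h_1 = o(r^{-2})$ as opposed to merely $O(r^{-2})$ --- exactly the borderline decay demanded by Theorem \ref{sec>0}.
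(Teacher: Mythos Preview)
Your proposal is correct and follows essentially the same route as the paper: the cycle of implications, the cone scalar-curvature formula $R_{g_C}=t^{-2}(R_{g_X}-2)$, the comparison $R_g\circ\phi=R_{g_C}+O(t^{-3\varepsilon-2})$ from the $C^2$-asymptotics, and the appeal to Theorem~\ref{sec>0} for the step $(4)\Rightarrow(1)$ are exactly what the paper does (the paper chains $(2)\Rightarrow(3)\Rightarrow(4)$ rather than $(2)\Rightarrow(4)$ directly, but this is cosmetic). The one place where you are slightly glib is the assertion that $r$ is comparable to $t$: the paper establishes this by first invoking Proposition~\ref{proper equiv} to get $\Ric\to 0$ and hence $-f\sim r^2$, and then combining with the identity $f\circ\phi=-t^2/4+c_0$ from \eqref{f in cone}; you should supply this link rather than treat it as automatic.
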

\begin{Remark}\label{cone isom CX}
If instead we assume that $M^3$ in Corollary \ref{cone sec>0} is 
$C^{\infty}$-asymptotic to $C(X)$ with $\Rm(g_X)>\text{id}_{\Lambda^2 TX}$ and that $X$ is simply connected, then by Theorem \ref{sec>0} and the uniqueness result in \cite{Der16}, $M^3$ is isometric to the positively curved conical expander (with asymptotic cone $C(X)$) constructed by Deruelle in \cite[Theorem 1.3]{Der16}.
\end{Remark}
\begin{Remark} Corollary \ref{cone sec>0}, as well as Theorem \ref{sec>0}, is not true in higher dimensions. The expanding gradient K\"{a}hler Ricci soliton constructed by Feldman-Ilmanen-Knopf \cite{FIK03} with complex dimension $m\geq 2$ has mixed Ricci curvature signs, positive scalar curvature, and flat asymptotic cone \cite{FIK03,Sie13}.
\end{Remark}
By the Morse theory, the level sets of $f$ near infinity of a Ricci nonnegative expanding gradient soliton are diffeomorphic to $\mathbb{S}^{n-1}$, which is simply connected for $n \ge 3$. Hence by Corollary \ref{cone sec>0}, for any $\alpha \in (0,1]$, we have that $(C(\mathbb{RP}^2), dt^2+\alpha g_{\mathbb{RP}^2})$ is not a $C^2$-asymptotic cone of any expander in the sense of Definition \ref{pAC expander}, where $g_{\mathbb{RP}^2}$ is the standard metric on $\mathbb{RP}^2$.

As another application of Theorem \ref{sec>0}, we remove the positive curvature condition in Theorem \ref{Cho result} in  dimension three.

\begin{Corollary} \label{rot sym} Suppose that $(M^3, g, f)$ is a $3$-dimensional complete and noncompact expanding gradient Ricci soliton which is 
$C^2$-asymptotic to the cone $\left(C(\mathbb{S}^2), dt^2+\alpha t^2g_{\mathbb{S}^2}\right)$, where $\alpha \in (0,1]$ is a constant, and $g_{\mathbb{S}^{2}}$ is the standard metric on $\mathbb{S}^{2}$.
Then M is rotationally symmetric.
\end{Corollary}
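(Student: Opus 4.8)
The plan is to use Theorem~\ref{sec>0} to upgrade the problem to the positively curved setting, and then invoke Chodosh's uniqueness result Theorem~\ref{Cho result} in dimension three. Indeed, Theorem~\ref{Cho result} needs positive sectional curvature as a hypothesis, and the whole point of Theorem~\ref{sec>0} is to produce exactly this hypothesis from curvature-decay information that is automatic for the cone at hand. Throughout I identify the given cone with the model in Definition~\ref{pAC expander} by taking link $X=\mathbb{S}^2$ and link metric $g_X=\a\,g_{\mathbb{S}^2}$, so that $g_C=dt^2+t^2 g_X=dt^2+\a t^2 g_{\mathbb{S}^2}$.

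First I would check that $R$ satisfies the decay hypothesis \eqref{R decay}. Since $g_X=\a\,g_{\mathbb{S}^2}$ is round with Gauss curvature $1/\a$ and $\a\in(0,1]$, its curvature operator satisfies $\Rm(g_X)=(1/\a)\,{\rm id}_{\Lambda^2 TX}\ge {\rm id}_{\Lambda^2 TX}$, i.e.\ the nonstrict form of \eqref{+ve cond} holds. A direct computation of the cone curvature gives scalar curvature $\frac{2}{t^2}(1/\a-1)=\frac{2(1-\a)}{\a\,t^2}\ge 0$, so the asymptotic cone is scalar-nonnegative and decays like $t^{-2}$. Plugging the $C^2$-closeness \eqref{cone derivative estimates for metric} into the curvature formula (the scalar curvature depends on two derivatives of the metric, and $\na_C^2(\phi^*g-g_C)$ is controlled to order $t^{-3\varepsilon-2}$) yields $R\circ\phi=\frac{2(1-\a)}{\a\,t^2}+O(t^{-2-3\varepsilon})$. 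Using that the intrinsic distance $r$ and the radial coordinate $t$ are comparable on an asymptotically conical end, I obtain $-C r^{-2-3\varepsilon}\le R\le C r^{-2}$ near infinity, so \eqref{R decay} holds with $h_1=C r^{-2-3\varepsilon}=o(r^{-2})$ and $h_2=C r^{-2}=o(r^{-1})$; this is precisely the assertion recorded before Corollary~\ref{cone sec>0}.

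With \eqref{R decay} in hand, Theorem~\ref{sec>0} applies and yields the dichotomy: either $(M,g)$ is flat, or it has positive sectional curvature everywhere. In the flat case $\Ric\equiv 0$, so the expanding soliton equation reduces to $\na^2 f=-\tfrac12 g$; by the classical rigidity for a nonconstant function whose Hessian is a nonzero constant multiple of the metric, $(M,g)$ is isometric to flat $\mathbb{R}^3$ and $f$ is a shifted quadratic, i.e.\ $(M,g,f)$ is the Gaussian expander, which is rotationally symmetric. (Consistently, flatness forces the asymptotic cone to be scalar-flat, hence $\a=1$.) In the remaining case $(M,g,f)$ has positive sectional curvature, so all hypotheses of Theorem~\ref{Cho result} are met with $n=3$ and link $\mathbb{S}^2$, and $M$ is rotationally symmetric. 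Either way the conclusion follows.

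I expect the only point requiring genuine care to be the passage from the $C^2$-asymptotics \eqref{cone derivative estimates for metric} to the \emph{two-sided} bound \eqref{R decay}: one must exploit that the cone scalar curvature is nonnegative (which is where $\a\le 1$, equivalently $\Rm(g_X)\ge{\rm id}_{\Lambda^2 TX}$, enters) in order to push the lower bound from the naive $O(r^{-2})$ down to the $o(r^{-2})$ rate that Theorem~\ref{sec>0} requires. The flat boundary case is a routine rigidity argument. All the substance of the corollary is thus inherited from Theorem~\ref{sec>0} and Theorem~\ref{Cho result}.
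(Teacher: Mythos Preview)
Your proposal is correct and follows essentially the same route as the paper: verify the scalar curvature decay \eqref{R decay} from the $C^2$-conical asymptotics (using that $\a\le 1$ makes the cone scalar-nonnegative), apply Theorem~\ref{sec>0} to obtain the flat/positively-curved dichotomy, and in the latter case invoke Chodosh's Theorem~\ref{Cho result}. The only cosmetic difference is that the paper packages the verification of \eqref{R decay} into Corollary~\ref{cone sec>0} (the implication $(4)\Rightarrow(1)$) rather than doing it inline, and in the flat case simply declares the conclusion ``obvious'' rather than spelling out the Gaussian-expander rigidity as you do.
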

\begin{Remark}As pointed out by Chen-Deruelle \cite[Remark 1.5]{CD15} and Deruelle \cite[Definition 1.1]{Der16}, the canonical form of a conical expander (with asymptotic cone $C(X)$) is a Ricci flow with singular initial data (in the Gromov-Hausdorff sense) given by $C(X)$. Hence Corollaries \ref{cone sec>0} and \ref{rot sym} can be respectively interpreted as the preservation of nonnegative curvature and the uniqueness of $3$-dimensional Ricci flow coming out of the cone $C(X)$, when the flow is the canonical form of an expander. Singular Ricci flow on closed $3$-manifold was introduced by Kleiner-Lott \cite{KL17, KL18} and its uniqueness and stability in dimension three were established by Bamler-Kleiner \cite{BK17}.

\end{Remark}

\begin{Remark}
The definition of conical expander in \cite[Definition 1.1]{Cho14} (see also Definition \ref{pAC expander}) is slightly different from the ones in \cite{Der16, Der17}. More precisely, the condition in \eqref{f in cone}
\be\label{vector field in conc coor}
\frac{2}{t}\frac{\partial \phi}{\partial t}=-\frac{\nabla f}{|\nabla f|^2}
\ee
is required in \cite{Cho14} but not in \cite{Der16, Der17}. Indeed, Condition \eqref{vector field in conc coor} is only used when we apply the result \cite{Cho14} in Corollary \ref{rot sym}, and hence Corollary \ref{cone sec>0} as well as Remark \ref{cone isom CX} also holds on those expanders satisfying all conditions in Definition \ref{pAC expander} except for possibly \eqref{vector field in conc coor}.
\end{Remark}

We shall sketch the idea of proof very briefly. Let $T_{ij}:=Rg_{ij}-2R_{ij}$. The tensor $T$ was used extensively to study the curvature pinching in $3$-dimensional Ricci flow (e.g. \cite{Ham82, Ham95, RFV2, Che09, CXZ13}). In general, sectional curvature being nonnegative implies $T\geq 0$. When $n=3$,
we have $\Rm \geq 0$ at $p \in M$ if and only if $T\geq 0$ at $p \in M$ \cite[Corollary 8.2]{Ham82}. The key ingredient of the proof is to look at the differential equation satisfied by $T$ and estimate the smallest eigenvalue of $T$ via the maximum principle argument.
When $n=3$, let $\lambda_1\leq \lambda_2\leq \lambda_3$ be the eigenvalues of $\Ric$ and $\nu_1\leq \nu_2\leq \nu_3$ be the eigenvalues of $T$. In the case of expanding soliton, $\nu_1$ satisfied the following nice inequality in the barrier sense (see \eqref{neat nu1 eqn}) 
\bee
\Delta_f\nu_1 \le -\nu_1-\nu_1^2-\nu_2\nu_3.
\eee
The decay condition \eqref{R decay} then allows us to apply the asymptotic estimate in \cite{Der17} and the maximum principle argument in \cite{MW15} to conclude that $\nu_1$ is nonnegative in the $3$-dimensional expanding soliton case. For a $4$-dimensional steady soliton, we invoke the dimension reduction trick via the level set of $f$ by \cite{MW15} and bound the smallest eigenvalue from below of a tensor $U$ approximating $T$ (see \eqref{U def}). Theorem \ref{lower bound for Rm in steady} follows from a similar argument as in the $3$-dimensional case together with some delicate estimates of the error terms caused by the extra dimension.

This paper is organized as follows. We start with the $3$-dimensional expanding soliton case to show Theorem \ref{sec>0},  Corollary \ref{cone sec>0}, and Corollary \ref{rot sym} in Section $2$. 
Next, we move on to the $4$-dimensional steady case in Section $3$ and present the proofs of Theorem \ref{lower bound for Rm in steady} and Corollary \ref{GH limit}.

\bigskip

\textbf{Acknowledgement.} The authors would like to thank Professor Bennett Chow for suggesting the problem of curvature of steady soliton singularity model. The authors would also like to thank Professor Yuxing Deng for  helpful communications of ideas on steady  solitons.

\section{Positive curvature in 3-dimensional expanding soliton case}

In this section, we shall consider a $3$-dimensional expanding gradient Ricci soliton $(M^3,g,f)$ satisfying \eqref{generalsoliton} with $\kappa=-1$. Recall that we have defined $T_{ij}:=Rg_{ij}-2R_{ij}$. 
As in the introduction, we denote the eigenvalues of $\Ric$ by $\lambda_1\leq \lambda_2\leq \lambda_3$ and the eigenvalues of $T$ by $\nu_1\leq \nu_2\leq \nu_3$. Then it can be seen from the definition of $T$ that $R=\lambda_1+\lambda_2+\lambda_3=\nu_1+\nu_2+\nu_3$ and 
\be\label{T vs Ric}
\begin{split}
    \nu_1&=\lambda_1+\lambda_2-\lambda_3=R-2\lambda_3\\
    \nu_2&=\lambda_1+\lambda_3-\lambda_2=R-2\lambda_2\\
    \nu_3&=\lambda_2+\lambda_3-\lambda_1=R-2\lambda_1.
\end{split}
\ee
To prove that $M$ has nonnegative curvature, It suffices to show that $\nu_1 \geq 0$ everywhere. We shall use the following well-known formula for $\Rm$ in dimension three (c.f. \cite[(1.62)]{CLN06}):
\be\label{Rm and Ric}
R_{ijkl}=R_{il}g_{jk}+R_{jk}g_{il}-R_{ik}g_{jl}-R_{jl}g_{ik}-\frac{R}{2}\left(g_{il}g_{jk}-g_{ik}g_{jl}\right).
\ee
It is a result by Hamilton \cite{Ham95} that $\nabla \left(|\nabla f|^2+R+f\right)=0$, by adding a constant to $f$, we have 
\be\label{ham eqn}
|\nabla f|^2+R=-f 
\ee
On the other hand, it was proved by Pigola–Rimoldi–Setti \cite{PRS11} and S.-J. Zhang \cite{Zhs11} that the scalar curvature of any complete expanding gradient Ricci soliton satisfies
\be\label{low R generic}
R\ge -\frac{n}{2}.
\ee
Throughout this article, for any smooth function $\gamma$, we define the weighted laplacian $\Delta_{\gamma}$ as $\Delta-\nabla_{\nabla\gamma}$, where $\Delta$ is the standard laplacian operator. 
Let $v=\frac{n}{2}-f$. By taking the trace of the soliton equation \eqref{generalsoliton}, \eqref{low R generic}, and \eqref{ham eqn}, we see that $\Delta v =R+\frac{n}{2}$, and
\begin{eqnarray}\label{eqn for v}
v&\ge& |\nabla v|^2;\\\nonumber
\Delta_f v&=&v.
\end{eqnarray}
Integrating (\ref{eqn for v}), we have that on any complete gradient Ricci expander of any dimension, it hods that $v\le r^2/4+Cr+C$ for some constant $C>0$ (see \cite[Corollary 27.10]{RFV4}). If, in addition, $\liminf_{x\to\infty} \Ric>-1/2$, then we have that $\Ric\ge (-1/2+\varepsilon_0)g$ near infinity for some constant $\varepsilon_0>0$,  and hence $\nabla^2 v\ge \varepsilon_0 g$ outside a compact set. By integrating the inequality along minimizing geodesics, we have
\be\label{lower bdd for -f}
v\ge \varepsilon_0 r^2/2-C_1r-C_1
\ee
for some positive constant $C_1$, and consequentially $v\sim r^2$.

The scalar curvature $R$ of an expanding gradient Ricci soliton satisfies \cite{RFV1}
\be\label{eqn for R exp}
\begin{split}
    \Delta_f R=-R-2|{\Ric}|^2\le -R-2R^2/n=-(1+2R/n)R.
\end{split}
\ee
By the strong maximum principle \cite{GT01}, if $M$ has nonnegative scalar curvature, then $R>0$ everywhere unless $R\equiv 0$, and in the latter case, by \eqref{eqn for R exp} and \eqref{generalsoliton}, $\Ric\equiv 0$ and $M$ is flat \cite[Theorem 1]{PRS11}. This also follows from the parabolic minimum principle applied to the scalar curvature of the canonical form \eqref{canonicalform}.

\begin{Lemma}\label{C1 Rm est} Suppose that $(M^3, g, f)$ is a $3$-dimensional complete noncompact expanding gradient Ricci soliton with the following scalar curvature decay, 
\bee
|R|\leq o(r^{-1}).
\eee
Then the following estimate is satisfied:
\bee
    |{\Rm}|+|\nabla {\Rm}| \leq o(r^{-1}).
\eee
\end{Lemma}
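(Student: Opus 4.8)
The plan is to upgrade a bound on $|R|$ to bounds on the full curvature tensor and its derivative by combining the soliton structure with interior derivative estimates for the Ricci flow. In dimension three the Ricci curvature is algebraically controlled by the scalar curvature via the Schouten-type identity \eqref{Rm and Ric}: since $\Rm$ is a universal linear expression in $\Ric$ and $R$, and in turn $\Ric$ is bounded in terms of $R$ and the eigenvalues $\lambda_i$, I first want to show that $|R|=o(r^{-1})$ already forces $|{\Rm}|=o(r^{-1})$. The subtlety is that $|R|$ small does not a priori bound the individual eigenvalues $\lambda_i$ of $\Ric$, since cancellation could occur. However, on an expanding soliton we have the a priori lower bound \eqref{low R generic} $R\ge -n/2$ together with $\Delta_f R = -R - 2|{\Ric}|^2$ from \eqref{eqn for R exp}; the plan is to use an integral or local estimate to promote $|R|\to 0$ into $|{\Ric}|\to 0$ at the same rate. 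Concretely, I would exploit that $|{\Ric}|^2 \le C(|R| + \text{curvature flux})$ type bounds, or more robustly appeal to the standard fact that on a complete Ricci flow one can localize and use Shi-type estimates.

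The cleaner route, which I expect to be the intended one, is to pass to the canonical form \eqref{canonicalform} and invoke Shi's local derivative estimates. First I would observe that the decay hypothesis $|R| \le o(r^{-1})$ together with the soliton identities gives, after the dimension-three reduction, a bound on $|{\Rm}|$ of the same order on larger and larger annuli; the key point is that on an expanding soliton the regions where $r \sim \rho$ are, after the natural parabolic rescaling by $\rho^{-2}$ (so that $v\sim r^2$ plays the role of a time/space scale via \eqref{lower bdd for -f}), uniformly noncollapsed Ricci flow regions with curvature bounded by $o(\rho^{-1})\cdot \rho^2 = o(\rho)$ in rescaled terms — I must be careful here to set up the rescaling so that the rescaled curvature is small, not large. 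The honest statement is: on the ball $B(x, c\,r(x))$ the curvature is bounded by $o(r(x)^{-1})$, so after rescaling distances by $r(x)^{-1}$ the curvature is $o(r(x)^{-1})\cdot r(x)^2 = o(r(x))$, which does \emph{not} shrink. So Shi's estimate must instead be applied at the \emph{unrescaled} scale: on a metric ball of fixed radius $1$ around $x$ where $|{\Rm}|\le K := o(r(x)^{-1})$ is small for $x$ far out, Shi's estimate yields $|\nabla{\Rm}| \le C(K + K^{3/2})$ on the half-ball, and the right-hand side is again $o(r(x)^{-1})$. This is the mechanism that transfers decay from $\Rm$ to $\nabla\Rm$.

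Thus the concrete steps are: (1) use \eqref{Rm and Ric} to reduce $|{\Rm}|$ to $|{\Ric}|$ and $|R|$, and establish $|{\Ric}| = o(r^{-1})$ from $|R|=o(r^{-1})$ — here I would use \eqref{eqn for R exp} which controls $|{\Ric}|^2$ in terms of $R$ and $\Delta_f R$, combined with a gradient estimate for $R$ (itself obtained from Shi or from the soliton equation $\nabla R = 2\Ric(\nabla f,\cdot)$) so that $|{\Ric}|^2 \le |R| + |\Delta_f R|/2$ can be bounded; (2) deduce $|{\Rm}|=o(r^{-1})$; (3) view $g$ as the time-zero slice of the eternal/immortal canonical form and apply Shi's interior derivative estimate on unit balls centered at points going to infinity, where the curvature bound $o(r^{-1})$ is already small, to get $|\nabla{\Rm}|\le C|{\Rm}|^{1/2}\sup|{\Rm}| + \cdots = o(r^{-1})$. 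The main obstacle I anticipate is step (1): passing from scalar curvature decay to full Ricci decay requires controlling the eigenvalue $\lambda_3$ (equivalently $\nu_1$), and one must ensure no eigenvalue of $\Ric$ can stay bounded away from zero while $R\to 0$. I would resolve this either by a local maximum-principle/Moser-iteration argument on \eqref{eqn for R exp} to bound $|{\Ric}|$ pointwise by the local sup of $|R|$, or by invoking that the canonical form has bounded geometry on the relevant scales so that a blow-up/contradiction argument forces $|{\Ric}|$ to decay at the same rate as $|R|$; making the constants in this transfer independent of the base point far out is the delicate part.
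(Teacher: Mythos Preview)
Your skeleton is right --- reduce to $|{\Ric}|$ via \eqref{Rm and Ric}, then pass to $|\nabla{\Rm}|$ by a Shi-type estimate --- but step (1) has a real gap, and none of your proposed fixes closes it. Writing $2|{\Ric}|^2=-R-\Delta_f R$ only trades the problem for control of $\Delta R$ or of $\la\nabla f,\nabla R\ra=2\Ric(\nabla f,\nabla f)$, which already contains the Ricci component you are trying to bound; Moser iteration or a blow-up/compactness argument would need a priori local curvature bounds or noncollapsing that you have not supplied, and in any case would not by themselves force the specific rate $o(r^{-1})$.

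The paper fills step (1) with two ingredients you are missing. First, it imports from \cite{Cha20} that under $R\to 0$ the full curvature is already \emph{bounded} and $-f\sim r^2$, hence $|\nabla f|\sim r$; these nontrivial structural facts about expanders start the bootstrap. Second --- and this is the key idea --- it works in the frame $\{e_1,e_2,\textbf{n}=\nabla f/|\nabla f|\}$ and uses the soliton Ricci identity \eqref{Ric identity exp} together with $2\Ric(\nabla f)=\nabla R$ to obtain
\[
R_{\textbf{n} ij\textbf{n}}=O\!\left(\frac{|\nabla{\Ric}|}{|\nabla f|}\right),\qquad R_{i\textbf{n}}=O\!\left(\frac{|\nabla R|}{|\nabla f|}\right).
\]
Substituting these into \eqref{RmRic 2} expresses the remaining Ricci components as $R$ plus the $\textbf{n}$-terms above. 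Since $|\nabla f|\sim r$, the a priori bounds $|\nabla^k{\Rm}|\le C_k$ already give $|{\Ric}|=O(r^{-1})$; a soliton-adapted Shi estimate from \cite{Der17} (applied on balls of radius $s$ comparable to $r$, so that the drift term $|\nabla f|/s$ stays bounded) then yields $|\nabla{\Rm}|=O(r^{-1})$; feeding this back into the frame identities upgrades the $\textbf{n}$-terms to $O(r^{-2})$, whence $|{\Ric}|\le C|R|+O(r^{-2})=o(r^{-1})$, and one more pass through Shi finishes. Your unit-ball parabolic Shi for step (3) is also not quite right on an expander: the drift $|\nabla f|\sim r$ means the canonical-form parabolic cylinder over a unit time interval sweeps out a region of $g$-diameter of order $r$, so you must either track $\Phi_t$ carefully or, as the paper does, use the elliptic soliton version of Shi with the appropriate scale.
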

\begin{proof}
In view of \eqref{Rm and Ric}, to prove the estimate on $\Rm$, we only need to bound the Ricci tensor. By \cite[Theorem 7]{Cha20} and by applying Shi's estimate \cite{CLN06} to the canonical form of the soliton, it holds that $|{\Rm}|$ is bounded and that there exists a finite positive constant $C_k$ for any $k\in\mathbb N$, such that
\be\label{weak bdd}
|\nabla^k {\Rm}|\leq C_k \quad \text{  on  }\quad  M.
\ee
It can be seen from \eqref{Rm and Ric} that in an orthonormal frame $\left\{\textbf{e}_1,\textbf{e}_2,\textbf{n}:=\frac{\nabla f}{|\nabla f|}\right\}$, we have
\be\label{RmRic 2}
R_{ij}=R_{\textbf{n} ij \textbf{n}}+\frac{R}{2}\left(g_{ij}-g_{i\textbf{n}}g_{j\textbf{n}}\right)-R_{\textbf{n}\textbf{n}}g_{ij}+R_{i\textbf{n}}g_{j\textbf{n}}+R_{j\textbf{n}}g_{i\textbf{n}},
\ee
where $i,j=1,2$, and we are not summing over $\textbf{n}$. Moreover, from the Ricci identity and the soliton equation we have
\be\label{Ric identity exp}
R_{ij, k}-R_{ik,j}=R_{kjil}f_l.
\ee
Thanks to \cite[Corollary 3]{Cha20} and \eqref{ham eqn}, we have
\be\label{fr2}
-f\sim r^2
\ee
and there is a positive constant $C$ such that
\be\label{nabla f est}
C^{-1}r^2\leq -f-R=|\nabla f|^2\leq Cr^2
\ee
outside a compact set of $M$. By \eqref{weak bdd}, \eqref{Ric identity exp}, and $2\Ric(\nabla f)=\nabla R$, we see that
\bee
R_{\textbf{n} ij \textbf{n}}=O\left(\frac{|{\nabla \Ric}|}{|\nabla f|}\right) =O(r^{-1}) \quad\text{  and  }\quad R_{i\textbf{n}}=O\left(\frac{|\nabla R|}{|\nabla f|}\right)=O(r^{-1}).
\eee
By virtue of \eqref{RmRic 2}, $|{\Ric}|$ and hence $|\Rm|$ are of $O(r^{-1})$. By the local Shi's estimate \cite[Lemma 2.6]{Der17}, there exists a positive constant $C$ such that for all $p\in$ $M$ and $s\geq 1$
\be\label{Shi estimate expander}
|\na {\Rm}|(p)\leq C\sup_{B_s(p)}|{\Rm}|\left[1+\sup_{B_s(p)}|{\Rm}|+\frac{\sup_{B_s(p)\setminus B_{s/2}(p)}|\na f|}{s}\right]^{\frac{1}{2}}.
\ee
Hence $|\nabla {\Rm}|=O(r^{-1})$, and both $R_{\textbf{n} ij \textbf{n}}$ and $R_{i\textbf{n}}$ are of $O(r^{-2})$.  $|{\Rm}|=o(r^{-1})$ then follows from \eqref{Rm and Ric} and \eqref{RmRic 2}. The estimate on $\nabla {\Rm}$ is now a consequence of the bound on $|{\Rm}|$ and \eqref{Shi estimate expander}.

\end{proof}

\begin{Lemma}\label{sec non-ve}
Let $(M^3, g, f)$ be a $3$-dimensional complete noncompact expanding gradient Ricci soliton with positive scalar curvature. Assume that the tensor $T$ satisfies \textbf{either one} of the following conditions:
\begin{enumerate}
    \item T/R is asymptotically nonnegative, i.e.,
    \bee
\liminf_{x\to\infty}\frac{T}{R}\geq 0;
\eee
 \item $\nu_1/R$ attains its minimum, where $\nu_1$ is the smallest eigenvalue of $T$ as in \eqref{T vs Ric}.
\end{enumerate}
Then M has nonnegative sectional curvature.
\end{Lemma}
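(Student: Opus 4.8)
The plan is to prove the equivalent statement that the smallest eigenvalue $\nu_1$ of $T$ is nonnegative; by \eqref{T vs Ric} and the discussion around \eqref{Rm and Ric} this is the same as $\Rm\ge 0$. Since $R>0$ by hypothesis, the natural object is the scale-invariant quotient $\mu:=\nu_1/R$, which is exactly the smallest eigenvalue of $T/R$. Thus \emph{both} hypotheses are statements about $\mu$: case (1) says $\liminf_{x\to\infty}\mu\ge 0$, while case (2) says $\mu$ attains its minimum. I would argue by contradiction, supposing $\mu<0$ somewhere and then producing an \emph{interior} point at which $\mu$ attains a negative minimum.

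The analytic input is the barrier inequality \eqref{neat nu1 eqn},
\[
\Delta_f\nu_1\le -\nu_1-\nu_1^2-\nu_2\nu_3,
\]
which comes from the soliton evolution equation for $T=Rg-2\Ric$ together with the three-dimensional identity \eqref{Rm and Ric}, the smallest eigenvalue being interpreted in the barrier (Calabi) sense via a parallel extension $\tilde v$ of its eigenvector $v$. Combining this with $\Delta_f R=-R-2|\Ric|^2$ from \eqref{eqn for R exp} and the identity $|\Ric|^2=\tfrac14(R^2+\nu_1^2+\nu_2^2+\nu_3^2)$, I would compute $\Delta_f\mu$ through the quotient rule $\Delta_f(a/b)=\tfrac{\Delta_f a}{b}-\tfrac{a\,\Delta_f b}{b^2}-\tfrac{2}{b}\langle\nabla b,\nabla(a/b)\rangle$. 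Using $\psi/R$ as an upper support function for $\mu$, where $\psi=T(\tilde v,\tilde v)$, at an interior minimum $p$ the gradient term drops out (both the support and $\mu$ are critical there) and, after clearing the positive factor $R$, the sign of $\Delta_f\mu(p)$ is governed by
\[
P:=-a_1^2-a_2a_3+\tfrac{a_1}{2}\big(1+a_1^2+a_2^2+a_3^2\big),
\]
where $a_i:=\nu_i/R$ satisfy $a_1\le a_2\le a_3$, $a_1+a_2+a_3=1$, and $a_1=\mu(p)<0$.

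The heart of the matter is the purely algebraic claim that $P<0$ whenever $a_1<0$. I would settle it by cases on the sign of $a_2$. If $a_2\ge 0$, then $a_2a_3\ge 0$, and every term of $P$ is nonpositive with $-a_1^2<0$ strict, so $P<0$. If $a_2<0$, then $a_1+a_2+a_3=1$ forces $a_3>0$; writing $a_1=-x$, $a_2=-y$ with $x\ge y>0$, a short expansion reduces $P$ to
\[
P=-2x^2+y^2+y-x-x^3-xy^2-x^2y,
\]
which is visibly negative since $x\ge y>0$ gives $-2x^2+y^2\le -y^2<0$, $y-x\le 0$, and the remaining cubic terms are negative. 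Either way $\Delta_f\mu(p)\le RP<0$, contradicting $\Delta_f\mu(p)\ge 0$ at an interior minimum.

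It then remains only to locate the interior minimum. In case (2) this is immediate. In case (1), if $\inf_M\mu=m<0$, the condition $\liminf_{x\to\infty}\mu\ge 0>m$ confines any minimizing sequence to a compact set, so by continuity $m$ is attained at an interior point. In both cases the negative interior minimum contradicts the previous paragraph, forcing $\mu\ge 0$, i.e.\ $\nu_1\ge 0$ and hence $\Rm\ge 0$. The step I expect to be the main obstacle is not the maximum-principle bookkeeping but the algebraic verification $P<0$: one must exploit the constraint $a_1+a_2+a_3=1$ and the ordering $a_1\le a_2\le a_3$ in an essential way, because the single term $-a_2a_3$ can be positive when $\nu_2<0$, and any crude estimate that ignores the normalization fails to control it against the cubic terms.
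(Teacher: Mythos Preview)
Your proof is correct and follows essentially the same strategy as the paper: derive a barrier differential inequality for $\nu_1/R$ from \eqref{neat nu1 eqn} and \eqref{eqn for R exp}, then apply the maximum principle at a negative interior minimum (produced in case (1) by the asymptotic condition, in case (2) by hypothesis). The paper organizes the computation a bit differently: instead of dropping the gradient cross-term only at the critical point, it absorbs it into the drifted operator $\Delta_{f-2\ln R}$ and obtains the \emph{global} inequality $\Delta_{f-2\ln R}(R^{-1}\nu_1)\le -R^{-2}\big[\nu_2^2(\nu_3-\nu_1)+\nu_3^2(\nu_2-\nu_1)\big]$, whose right-hand side is exactly your $-RP$ in factored form; this factorization makes both the sign $P\le 0$ and the equality case transparent without your case split on $\operatorname{sgn}(a_2)$, and the resulting inequality \eqref{R-1nu eqn} is reused verbatim in the proofs of Lemma~\ref{TR at infty} and Theorem~\ref{sec>0}.
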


\begin{Remark}\label{sec remark}
The above lemma implies that any $3$-dimensional complete noncompact expanding gradient Ricci soliton with nonnegative sectional curvature outside a compact set has nonnegative sectional curvature everywhere.
\end{Remark}

\begin{proof}
By the differential equation of $\Ric$ \cite[Lemma 2.1]{PW10} and \eqref{Rm and Ric}, we may compute in an orthonormal frame
\be\label{Ric sim eqn}
\begin{split}
    \Delta_f R_{il}&=-R_{il}-2R_{ijkl}R_{jk}\\
    &= -R_{il}-2RR_{il}-2|{\Ric}|^2g_{il}+4R_{ik}R_{kl}+R^2g_{il}-RR_{il}\\
    &= -R_{il}-3RR_{il}+4R_{ik}R_{kl}+R^2g_{il}-2|{\Ric}|^2g_{il}.
\end{split}
\ee
Hence, in the barrier sense, we have
\bee
\Delta_f 2\lambda_3\ge -2\lambda_3-6R\lambda_3+8\lambda_3^2+2R^2-4|{\Ric}|^2.
\eee
By virtue of the formula $\Delta_f R=-R-2|{\Ric}|^2$, it holds that
\be\label{eqn for nu}
\Delta_f \nu_1=\Delta_f (R-2\lambda_3)\le -\nu_1+6R\lambda_3-2R^2-8\lambda_3^2+2|{\Ric}|^2.
\ee
Using \eqref{T vs Ric}, we may express the R.H.S. of \eqref{eqn for nu} in terms of the eigenvalues of $T$
\begin{eqnarray*}
6R\lambda_3-2R^2-8\lambda_3^2+2|{\Ric}|^2&=& 3(\nu_1+\nu_2+\nu_3)(\nu_2+\nu_3)-2(\nu_1+\nu_2+\nu_3)^2-2(\nu_2+\nu_3)^2\\
& &+\frac{1}{2}\left[(\nu_2+\nu_3)^2+(\nu_1+\nu_3)^2+(\nu_1+\nu_2)^2\right]\\
&=& (\nu_2+\nu_3)^2+3(\nu_1\nu_3+\nu_1\nu_2)-4(\nu_1\nu_2+\nu_1\nu_3+\nu_2\nu_3)\\
& & -2(\nu_1^2+\nu_2^2+\nu_3^2)+\left[\nu_1^2+\nu_2^2+\nu_3^2+\nu_1\nu_2+\nu_1\nu_3+\nu_2\nu_3\right]\\
&=&-\nu_1^2-\nu_2\nu_3.
\end{eqnarray*}
Hence \eqref{eqn for nu} can be rewritten as
\be\label{neat nu1 eqn}
\Delta_f\nu_1\le -\nu_1-\nu_1^2-\nu_2\nu_3.
\ee
On the other hand, we have
\be\label{eqn for rec R}
\begin{split}
    \Delta_f R^{-1}&=-R^{-2}\Delta_f R+2R^{-3}|\nabla R|^2\\
    &= -R^{-2}(-R-2|{\Ric}|^2)+2R^{-3}|\nabla R|^2\\
    &= R^{-1}+2R^{-2}|{\Ric}|^2+2R^{-3}|\nabla R|^2.
\end{split}
\ee

\begin{eqnarray*}
\Delta_f \left(R^{-1}\nu_1\right)&=& R^{-1}\Delta_f \nu_1+\nu_1\Delta_f R^{-1}+2\la \nabla R^{-1}, \nabla \left(R^{-1}\nu_1 R\right)\ra\\
&\le&-R^{-1}\nu_1-R^{-1}\left(\nu_1^2+\nu_2\nu_3\right)+R^{-1}\nu_1+2R^{-2}|{\Ric}|^2\nu_1+2R^{-3}|\nabla R|^2\nu_1\\
&&-2R^{-1} \la\nabla R, \na \left(R^{-1}\nu_1\right)\ra-2R^{-3}|\nabla R|^2\nu_1 \\
&=&-R^{-1}\left(\nu_1^2+\nu_2\nu_3\right)+2R^{-2}|{\Ric}|^2\nu_1-2R^{-1} \la\nabla R, \na \left(R^{-1}\nu_1\right)\ra.
\end{eqnarray*}


Hence we can rewrite the above equation as
\be\label{semi eqn for nu1R}
\begin{split}
\Delta_{f-2\ln R}\left( R^{-1}\nu_1\right)&\le -R^{-1}\left( \nu_1^2+\nu_2\nu_3\right)+2R^{-2}|{\Ric}|^2 \nu_1\\
&= -R^{-2}\left(R(\nu_1^2+\nu_2\nu_3)-2|{\Ric}|^2\nu_1\right).
\end{split}
\ee
Straightforward computation yields
\begin{eqnarray*}
R(\nu_1^2+\nu_2\nu_3)-2|{\Ric}|^2\nu_1&=& (\nu_1+\nu_2+\nu_3)(\nu_1^2+\nu_2\nu_3)-\frac{\nu_1}{2}\left[(\nu_1+\nu_2)^2+(\nu_1+\nu_3)^2+(\nu_2+\nu_3)^2\right]\\
&=&\nu_1^3+\nu_1^2\nu_2+\nu_1^2\nu_3+\nu_2^2\nu_3+\nu_2\nu_3^2+\nu_1\nu_2\nu_3\\
&&-\left[\nu_1^3+\nu_1^2\nu_2+\nu_1\nu_2^2+\nu_1^2\nu_3+\nu_1\nu_3^2+\nu_1\nu_2\nu_3\right]\\
&=&\nu_2^2(\nu_3-\nu_1)+\nu_3^2(\nu_2-\nu_1).
\end{eqnarray*}
Hence 
\be\label{R-1nu eqn}
\Delta_{f-2\ln R}\left( R^{-1}\nu_1\right)\le -R^{-2}\left[\nu_2^2(\nu_3-\nu_1)+\nu_3^2(\nu_2-\nu_1)\right]\leq 0.
\ee
Note that so far the above computation works well for any $3$-dimensional gradient Ricci expander with positive scalar curvature. In particular, \eqref{neat nu1 eqn} and \eqref{R-1nu eqn} also hold on these solitons without additional assumption.

If $\nu_1$ is negative somewhere, then either one of our assumptions will imply that $R^{-1}\nu_1$ attains its negative minimum, say, at a point $q \in M$ and $R^{-1}(q)\nu_1(q)<0$. Then we have
\bee
0\leq \Delta_{f-2\ln R}\left( R^{-1}\nu_1\right)(q)\leq -R^{-2}(q)\left[\nu_2^2(\nu_3-\nu_1)+\nu_3^2(\nu_2-\nu_1)\right](q)
\eee
in the barrier sense. As $\nu_1\leq \nu_2\leq \nu_3$, we have $\nu_2^2(\nu_3-\nu_1)+\nu_3^2(\nu_2-\nu_1)=0$ at $q$. Since $3\nu_3(q)\geq R(q)>0$, we see that $\nu_1(q)=\nu_2(q)<0$. However,
\bee
0<\nu_2^2(q)(\nu_3(q)-\nu_1(q))\leq 0,
\eee
which is impossible. This shows that $\nu_1\ge 0$ on $M$ and completes the proof of the lemma.
\end{proof}


The same line of argument of Lemma \ref{sec non-ve} also gives the following proposition which shall not be used in the rest of the article.

\begin{Proposition}\label{Ric non-ve}
If $(M^3, g, f)$ is a $3$-dimensional complete noncompact expanding gradient Ricci soliton with positive scalar curvature $R>0$ and $\Ric$ satisfies \textbf{either one} of the following conditions:
\begin{enumerate}
    \item $\Ric/R$ is asymptotically nonnegative, i.e.
    \bee
\liminf_{x\to\infty}\frac{\Ric}{R}\geq 0;
\eee
 \item $\lambda_1/R$ attains its minimum, where $\lambda_1$ is the smallest eigenvalue of $\Ric$,
\end{enumerate}
then M has nonnegative Ricci curvature everywhere.
\end{Proposition}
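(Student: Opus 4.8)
The plan is to run exactly the same scheme as in the proof of Lemma \ref{sec non-ve}, but applied directly to the smallest eigenvalue $\lambda_1$ of $\Ric$ rather than to $\nu_1=R-2\lambda_3$. First I would take the evolution identity \eqref{Ric sim eqn} for the full Ricci tensor and evaluate it on a unit eigenvector of $\lambda_1$. Since the smallest eigenvalue of a symmetric tensor admits smooth upper support functions (namely the Rayleigh quotient of the parallel extension of a minimizing eigenvector, which dominates $\lambda_1$ with equality at the chosen point), the maximum principle in the barrier sense gives
\[
\Delta_f\lambda_1\le -\lambda_1-3R\lambda_1+4\lambda_1^2+R^2-2|{\Ric}|^2.
\]
Rewriting the quadratic part through $R=\lambda_1+\lambda_2+\lambda_3$ and $|{\Ric}|^2=\lambda_1^2+\lambda_2^2+\lambda_3^2$ collapses it to the clean form
\[
\Delta_f\lambda_1\le -\lambda_1-\lambda_1(\lambda_2+\lambda_3)-(\lambda_2-\lambda_3)^2,
\]
which is the $\lambda_1$-analogue of \eqref{neat nu1 eqn}.

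Next I would pass to the quotient $R^{-1}\lambda_1$, mirroring \eqref{eqn for rec R}--\eqref{R-1nu eqn}. Combining the inequality above with $\Delta_f R^{-1}=R^{-1}+2R^{-2}|{\Ric}|^2+2R^{-3}|\nabla R|^2$ and the product rule, the $\pm R^{-1}\lambda_1$ contributions cancel, and conjugating by $-2\ln R$ (i.e.\ working with $\Delta_{f-2\ln R}$) absorbs the gradient and $|\nabla R|^2$ terms precisely as in the Lemma. The result is
\[
\Delta_{f-2\ln R}\left(R^{-1}\lambda_1\right)\le R^{-2}\Big[\lambda_1^2(2\lambda_1-\lambda_2-\lambda_3)-(\lambda_2+\lambda_3)(\lambda_2-\lambda_3)^2\Big].
\]

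Finally I would argue by contradiction as in the Lemma. If $\lambda_1<0$ somewhere, then either hypothesis forces $R^{-1}\lambda_1$ to attain a negative minimum at some $q\in M$: under condition (1), because $\liminf_{x\to\infty}R^{-1}\lambda_1\ge 0$ together with negativity somewhere confines the infimum to a compact set; under condition (2) this is assumed outright. At such a $q$ one has $\lambda_1(q)<0$, whence $R=\lambda_1+\lambda_2+\lambda_3>0$ yields $\lambda_2+\lambda_3>-\lambda_1>0$ and therefore $2\lambda_1-\lambda_2-\lambda_3<3\lambda_1<0$; thus the first bracket term is \emph{strictly} negative while the second is nonpositive, making the right-hand side strictly negative and contradicting $\Delta_{f-2\ln R}(R^{-1}\lambda_1)(q)\ge 0$ (in the barrier sense) at a minimum. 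The only place needing care — the main, albeit minor, obstacle — is the algebraic reduction of the bracket and the observation that $R>0$ upgrades $2\lambda_1-\lambda_2-\lambda_3\le 0$ to a strict inequality at $q$; note that, unlike the $\nu_1$ case, this strictness obviates any separate analysis of an equality case.
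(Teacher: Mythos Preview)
Your proof is correct and follows essentially the same scheme as the paper: derive a barrier inequality for the normalized smallest eigenvalue under $\Delta_{f-2\ln R}$ and apply the minimum principle. The paper routes the computation through $\nu_3=R-2\lambda_1$ so as to recycle the Lemma~\ref{sec non-ve} calculation verbatim (swapping $\nu_1\leftrightarrow\nu_3$), arriving at $\Delta_{f-2\ln R}(R^{-1}\lambda_1)\le -\tfrac12 R^{-2}\big[\nu_1^2(\nu_3-\nu_2)+\nu_2^2(\nu_3-\nu_1)\big]$; your bracket $\lambda_1^2(\lambda_2+\lambda_3-2\lambda_1)+(\lambda_2+\lambda_3)(\lambda_2-\lambda_3)^2$ is the same expression rewritten in the $\lambda$-variables. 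The only real difference is the endgame: in the paper's $\nu$-form one must argue that equality in the bracket forces a contradiction, whereas your $\lambda$-form makes strict positivity at a point with $\lambda_1<0$ and $R>0$ immediate (since $\lambda_2+\lambda_3>-\lambda_1>0$ gives $\lambda_2+\lambda_3-2\lambda_1>0$), which is a small but genuine streamlining.
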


\begin{Remark}\label{Ric remark}
The above proposition implies that any $3$-dimensional complete noncompact expanding gradient Ricci soliton with nonnegative Ricci curvature outside a compact set has nonnegative Ricci curvature everywhere.
\end{Remark}

\begin{proof}
We sketch the proof and point out the essential differences from Lemma \ref{sec non-ve}. It follows from \eqref{Ric sim eqn} that in the sense of barrier
\bee
\Delta_f 2\lambda_1\le -2\lambda_1-6R\lambda_1+8\lambda_1^2+2R^2-4|{\Ric}|^2.
\eee
Using $\nu_3=R-2\lambda_1$ and the computation showing \eqref{neat nu1 eqn}, we see that 
\bee
\begin{split}
\Delta_f \nu_3&\ge -\nu_3+6R\lambda_1-2R^2-8\lambda_1^2+2|{\Ric}|^2\\
&= -\nu_3-\nu_3^2-\nu_1\nu_2.
\end{split}
\eee
By exchanging the roles of $\nu_1$ and $\nu_3$, we may compute as in \eqref{eqn for rec R} and \eqref{semi eqn for nu1R} to get
\bee
\begin{split}
\Delta_{f-2\ln R}\left( R^{-1}\nu_3\right)&\ge -R^{-2}\left(R(\nu_3^2+\nu_1\nu_2)-2|{\Ric}|^2\nu_3\right)\\
&= R^{-2}\left(\nu_1^2(\nu_3-\nu_2)+\nu_2^2(\nu_3-\nu_1)\right)\ge 0.
\end{split}
\eee
Since $\nu_3/R=1-2\lambda_1/R$, it holds that
\be\label{eqn for lambda1R}
\Delta_{f-2\ln R}\left(R^{-1}\lambda_1\right)\le -R^{-2}\left(\nu_1^2(\nu_3-\nu_2)+\nu_2^2(\nu_3-\nu_1)\right)/2\le 0.
\ee
If $\lambda_1$ is negative somewhere, then either one of our assumptions will imply that $R^{-1}\lambda_1$ attains its negative minimum, say, at a point $q \in M$ and $R^{-1}(q)\lambda_1(q)<0$. We may further suppose that $\nu_1(q)<0$, otherwise $\lambda_1(q)=(\nu_1(q)+\nu_2(q))/2\ge \nu_1(q)\ge 0$. By \eqref{eqn for lambda1R}, we have at $q$

\[
\nu_1^2(\nu_3-\nu_2)+\nu_2^2(\nu_3-\nu_1)=0.
\]
Hence $\nu_2=\nu_3>0$ and $0<R/3\le\nu_3=\nu_1<0$, which is impossible. The completes the proof of the proposition.
\end{proof}

\begin{Lemma} \label{TR at infty} Under the same assumptions of Theorem \ref{sec>0}, it holds that
\be\label{Rm vs R on 3d exp}
|{\Rm}|\le CR \quad\text{  on } \quad M,
\ee
for some positive constant $C>0$.
\end{Lemma}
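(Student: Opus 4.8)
The plan is to reduce the estimate to a one-sided eigenvalue pinching for $T$ and then run a weighted maximum principle on the ratio $R^{-1}\nu_1$. First observe that the hypotheses $-h_1\le R\le h_2$ with $h_1=o(r^{-2})$ and $h_2=o(r^{-1})$ give $|R|=o(r^{-1})$, so Lemma \ref{C1 Rm est} applies and yields $|{\Rm}|+|\nabla{\Rm}|=o(r^{-1})$; in particular all curvatures tend to $0$ at infinity. Since the eigenvalues of $T$ are twice the principal sectional curvatures, $|{\Rm}|$ is comparable to $\max_i|\nu_i|$, and because $\nu_1\le\nu_2\le\nu_3$ with $\nu_1+\nu_2+\nu_3=R$, the bound $|{\Rm}|\le CR$ is equivalent to the combination $R>0$ (or $M$ flat) together with a lower bound $\nu_1\ge -CR$: indeed $\nu_1\ge -CR$ forces $\nu_3\le R-2\nu_1\le(1+2C)R$ and hence $\max_i|\nu_i|\le(1+2C)R$.

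Next I would establish $R\ge 0$. The flat case being trivial, the strong maximum principle applied to $\Delta_f R=-R-2|{\Ric}|^2$ from \eqref{eqn for R exp} upgrades $R\ge 0$ to $R>0$ everywhere (unless $R\equiv 0$, whence $M$ is flat), so it suffices to rule out negative scalar curvature. This is exactly where the sharp lower decay $R\ge -o(r^{-2})$, as opposed to $O(r^{-2})$, enters: it excludes the asymptotics $R\sim -cr^{-2}$ of Bryant's negatively curved expanders, and via the rescaled blow-down limits controlled by the asymptotic estimates of Deruelle \cite{Der17} it forces the scalar curvature of any tangent object at infinity to be nonnegative. Feeding this back through the minimum principle for $R$, together with $R\to 0$ at infinity, should give $R\ge 0$ on all of $M$.

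With $R>0$ in hand, I would invoke the supersolution inequality \eqref{R-1nu eqn}, namely $\Delta_{f-2\ln R}\left(R^{-1}\nu_1\right)\le 0$, which holds on any $3$-dimensional expander with $R>0$. Using the decay from Lemma \ref{C1 Rm est} together with the refined structure \eqref{RmRic 2}, which shows that in the frame adapted to $\nabla f$ one has $R_{ij}=\tfrac{R}{2}\delta_{ij}+O(r^{-2})$ on the plane orthogonal to $\nabla f$, and hence $\nu_1=O(r^{-2})$, one controls $R^{-1}\nu_1$ near infinity; the maximum principle argument of Munteanu--Wang \cite{MW15} then yields a uniform lower bound $R^{-1}\nu_1\ge -C$. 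This is the desired pinching, so $|{\Rm}|\le CR$ on $M$.

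The main obstacle is the behavior of $R^{-1}\nu_1$ at infinity, where both $\nu_1=O(r^{-2})$ and $R=o(r^{-1})$ degenerate to $0$ and their quotient need not converge; controlling it is precisely what forces the use of the sharp hypothesis $h_1=o(r^{-2})$ and of Deruelle's asymptotics, rather than the soft algebra of \eqref{Rm and Ric} alone. A secondary difficulty is that the drift $f-2\ln R$ becomes singular wherever $R$ is small, so applying the weighted maximum principle requires either a barrier and cutoff as in \cite{MW15} or a suitable Omori--Yau principle on the weighted manifold.
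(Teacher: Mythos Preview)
Your overall strategy---reduce to a one-sided pinching $\nu_1\ge -CR$---is the right one, and the preliminary steps (applying Lemma \ref{C1 Rm est}, extracting $\nu_1=o(r^{-2})$ from the adapted-frame structure, passing from $R\ge 0$ to $R>0$ by the strong minimum principle) match the paper. But the core of your argument has a genuine gap: you plan to feed the pure supersolution inequality $\Delta_{f-2\ln R}(R^{-1}\nu_1)\le 0$ into a Munteanu--Wang style maximum principle, and any such argument needs a priori control of $R^{-1}\nu_1$ at infinity. You correctly identify this as ``the main obstacle'' and gesture at Deruelle's asymptotics, but you never say what those asymptotics give you. Knowing $\nu_1=o(r^{-2})$ is useless for bounding $R^{-1}\nu_1$ unless you also have a \emph{lower} bound on $R$, and you never produce one; $R$ could decay like $v^{-1/2}e^{-v}$, far faster than $r^{-2}$.

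The paper resolves this differently and avoids the circularity. It does \emph{not} apply the maximum principle to $R^{-1}\nu_1$; instead it works with $u:=-\nu_1$ itself. From \eqref{neat nu1 eqn} and $\nu_3\le R-2\nu_1\le Cr^{-1}$ it derives, on $\{u\ge 0\}$, the drift inequality $\Delta_f u\ge -u-c_0v^{-1/2}u$, whose boundary condition $\limsup_{x\to\infty} vu\le 0$ follows purely from $\nu_1\ge -o(r^{-2})$ and $v\sim r^2$---no information on $R$ required. Deruelle's maximum principle \cite[Lemma 2.9(1)]{Der17} then gives the \emph{exponential} decay $u\le Cv^{-1/2}e^{-v}$. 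Only at the very end does $R$ re-enter: the sharp lower bound $R\ge C_0 v^{-1/2}e^{-v}$ from \cite[Theorem 3]{Cha20} converts this into $u\le C_1R$. This last ingredient---a quantitative lower bound on $R$ matching the decay rate obtained for $u$---is the missing idea in your outline. (Incidentally, the nonnegativity $R\ge 0$ is also a direct citation of \cite[Theorem 6]{Cha20}; no blow-down argument is needed.)
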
 
\begin{Remark}
The proof of Lemma \ref{TR at infty} also shows that \eqref{Rm vs R on 3d exp} is true on any $3$-dimensional complete noncompact gradient Ricci expander with $\liminf_{x\to\infty} r^2\Rm\ge 0$ and $|{\Rm}|\leq Cr^{-\delta}$ near infinity for some $\delta >0$. 
\end{Remark}

\begin{proof} As $\lim_{x\to\infty} R=0$ and that $R\ge -o(r^{-2})$, the scalar curvature $R$ is nonnegative \cite[Theorem 6]{Cha20}.
By the strong minimum principle, we may assume that $R>0$, since otherwise the expander is flat. It follows from \eqref{R decay} that $|R|\leq o(r^{-1})$ near infinity. Hence, by Lemma \ref{C1 Rm est}, \eqref{Ric identity exp}, and \eqref{nabla f est}, we have
\be\label{nu low der bdd}
R_{\textbf{n} ij \textbf{n}}=O\left(\frac{|\nabla {\Ric}|}{|\nabla f|}\right) =o(r^{-2})\quad \text{  and  }\quad R_{i\textbf{n}}=O\left(\frac{|\nabla R|}{|\nabla f|}\right)=o(r^{-2}).
\ee
where $\textbf{n}:=\frac{\nabla f}{|\nabla f|}$. Using \eqref{RmRic 2}, \eqref{nu low der bdd} and the fact that $R>0$ near infinity, we have

\be\label{T low bdd}
\begin{split}
T_{ij}=Rg_{ij}-2R_{ij}&=-2R_{\textbf{n} ij \textbf{n}}+Rg_{i\textbf{n}}g_{j\textbf{n}}+2R_{\textbf{n}\textbf{n}}g_{ij}-2R_{i\textbf{n}}g_{j\textbf{n}}
-2R_{j\textbf{n}}g_{i\textbf{n}}\\
&\geq -2R_{\textbf{n} ij \textbf{n}}+2R_{\textbf{n}\textbf{n}}g_{ij}-2R_{i\textbf{n}}g_{j\textbf{n}}
-2R_{j\textbf{n}}g_{i\textbf{n}}\\
&\geq -o(r^{-2})g_{ij},
\end{split}
\ee
Hence $\nu_1\ge -o(r^{-2})$ and 
\be\label{bdry cond of r2nu}
\limsup_{x\to\infty} -\nu_1r^2\le 0.
\ee
It is not difficult to see that \eqref{neat nu1 eqn} is still valid. As $\nu_1+\nu_2+\nu_3=R\ge 0$ and $\nu_1\le\nu_2\le \nu_3$, we have that $3\nu_3\ge R\ge 0$ and that the following holds in the barrier sense.
\be\label{nu eqn2}
\begin{split}
\Delta_f\nu_1&\le -\nu_1-\nu_1^2-\nu_2\nu_3\\
             &=-\nu_1-\nu_1^2-\nu_1\nu_3-(\nu_2-\nu_1)\nu_3\\
             &\le -\nu_1-\nu_1^2-\nu_1\nu_3.
\end{split}
\ee
Moreover by \eqref{bdry cond of r2nu}, $0\le \nu_3\le R-2\nu_1\le R+Cr^{-2}\le Cr^{-1}$ near infinity. Let $u=-\nu_1$, from \eqref{fr2}, we may rewrite \eqref{nu eqn2} on the set $\{x: u(x)\ge 0\}=\{x: \nu_1(x)\le 0\}$ as
\bee
\Delta_f u\ge -u-c_0v^{-1/2}u,
\eee
where $v=n/2-f\sim r^{2}$ and $c_0$ is some positive constant. Together with the boundary condition at infinity $\limsup_{x\to\infty} vu\le 0$, the maximum principle argument by Deruelle \cite[Lemma 2.9(1)]{Der17} shows that there are positive constants $C$ and $C_1$ such that on $M$
\be\label{ule0}
\begin{split}
-\nu_1= u &\leq Cv^{1-3/2}e^{-v}\\
&\leq C_1R,
\end{split}
\ee
where we also used the scalar curvature lower bound \cite[Theorem 3]{Cha20} and the fact that $M$ is not flat. Note that the argument in \cite{Der17} requires $u\ge 0$. However, \eqref{ule0} becomes trivial if $u<0$ at the point under consideration. Hence we can always assume that the maximum principle is applied on $\{u\ge 0\}$ and no extra sign assumption on $u$ is needed. From \eqref{ule0} we see that $-C_1R\leq \nu_1\leq \nu_2\leq \nu_3\leq R-2\nu_1\le (1+2C_1)R$ and $|T|\leq cR$. Since $T=Rg-2\Ric$, we have
\bee
|{\Ric}|\le c(|T|+R)\leq CR.
\eee
The estimate on $|{\Rm}|$ follows from \eqref{Rm and Ric}.
\end{proof}

\begin{proof}[\textbf{Proof of Theorem \ref{sec>0}}]


By our assumption \eqref{R decay} and the argument in the proof of Lemma \ref{TR at infty}, we may assume that $R>0$ on $M$.
Using Lemma \ref{C1 Rm est}, \eqref{fr2}, and \eqref{T low bdd}, we see that $v\sim r^{2}$ and 
\be\label{bdry cond of T div R}
R^{-1}T\ge -o(1) v^{-1}R^{-1}g.
\ee
We claim that there is a large positive constant $a$, such that outside a compact set $K_0\subset M$, it holds that
\be\label{contrad ineq}
\Delta_{f-2\ln R}\left(e^{av^{-1/2}}v^{-1}R^{-1}\right) \le -v^{-3/2}e^{av^{-1/2}}R^{-1}<0.
\ee
We first assume \eqref{contrad ineq} and prove the theorem. Fixing any positive integer $k$, we define 
\bee
Q_k:=R^{-1}\nu_1+ e^{av^{-1/2}}v^{-1}R^{-1}k^{-1}.
\eee
By \eqref{bdry cond of T div R}, we have
$\liminf_{x\to\infty} Q_k\ge 0$. If $Q_k$ is nonnegative everywhere for infinitely many $k$, then $\nu_1\ge 0$ follows by letting $k\to\infty$. Arguing by contradiction, we may assume, without loss of generality, that for all $k$, $Q_k$ is negative somewhere and hence it attains its negative minimum at a point $x_k$. 
We must have $x_k \in K_0$ for each $k$, otherwise, by the minimum principle, \eqref{R-1nu eqn}, and \eqref{contrad ineq}, we have
\bee
0 \le \Delta_{f-2\ln R}\, Q_k (x_k) \le k^{-1}\Delta_{f-2\ln R}\left(e^{av^{-1/2}}v^{-1}R^{-1}\right)(x_k)\le -k^{-1}v^{-3/2}e^{av^{-1/2}}R^{-1}<0,
\eee
which is impossible. Hence $\inf_M Q_k=Q_k(x_k)=\min_{K_0} Q_k$. By letting $k\to\infty$, we conclude that $\inf_M R^{-1}\nu_1=\min_{K_0} R^{-1}\nu_1$ which implies that $R^{-1}\nu_1$ attains its minimum. By Lemma \ref{sec non-ve}, $M$ has nonnegative sectional curvature.

Next we justify $\Rm>0$. Let $\pi: \widetilde{M} \longrightarrow M$ be the universal covering map of $M$. It is clear that $(\widetilde{M}, \pi^*g, \pi^*f)$ is a complete nonflat expanding gradient Ricci soliton. If the sectional curvature is not striclty positive, then by the  strong maximum principle of the Ricci flow \cite[Section 6.7]{CLN06} and the De Rham splitting Theorem,  $\widetilde{M}$ splits isometrically as $\mathbb{R}\times N$, where $N$ is a $2$-dimensional expanding gradient Ricci soliton. 
As $\lim_{x\to\infty}R_g=0$ and $M$ is not flat, $R_g$ attains its maximum, say at $q_0 \in M$. Let $(a,b) \in \mathbb{R}\times N$ such that $\pi(a,b)=q_0$. For any $(s, b) \in \mathbb{R}\times \{b\}$, it is evident that
\bee
R_{\pi^*g}(s,b)=R_{\pi^*g}(a,b)=R_g(q_0).
\eee
As $R_g\to 0$ at $\infty$, $\pi\left(\mathbb{R}\times \{b\}\right)\subseteq K_1$ for some compact set $K_1$. Furthermore, the soliton equation $\frac{\partial^2 \pi^* f}{\partial s^2}=-1/2$ implies that
\bee
\pi^*f(s,b)=-s^2/4+C_1s+C_2
\eee
for some constants $C_1$ and $C_2$ independent on $s$. We have that for any $s\in \mathbb{R}$
\bee
-s^2/4+C_1s+C_2\geq \min_{K_1} f>-\infty,
\eee
which is impossible. This completes the proof of Theorem \ref{sec>0} modulo formula \eqref{contrad ineq}. It remains to establish formula \eqref{contrad ineq}.
\begin{proof}[\textbf{Proof of formula \eqref{contrad ineq}:}]
By virtues of \eqref{eqn for v} and $\Delta_f R=-R-2|{\Ric}|^2$, it holds that  
\bee
\begin{split}
\Delta_f v^{-1}&=-v^{-2}\Delta_fv+2|\nabla v|^2v^{-3}=-v^{-1}+2|\nabla v|^2v^{-3},
\\
\Delta_f R^{-1}&=-R^{-2}\Delta_f R+2R^{-3}|\nabla R|^2=R^{-1}(1+2R^{-1}|{\Ric}|^2)+2R^{-3}|\nabla R|^2.
\end{split}
\eee
By the product rule, we have
\bee
\begin{split}
\Delta_f\left(v^{-1}R^{-1}\right)&= R^{-1}\Delta_f v^{-1}+v^{-1}\Delta_f R^{-1}-2R^{-2}\la \nabla R, \nabla \left(v^{-1}R^{-1}R\right)\ra\\
&=-v^{-1}R^{-1}+2|\nabla v|^2v^{-3}R^{-1}+v^{-1}R^{-1}(1+2R^{-1}|{\Ric}|^2)+2v^{-1}R^{-3}|\nabla R|^2\\
&\,\,\,\,\,\,\,-2\la\nabla \ln R, \nabla \left(v^{-1}R^{-1}\right)\ra-2v^{-1}R^{-3}|\nabla R|^2\\
&= 2|\nabla v|^2v^{-3}R^{-1}+2v^{-1}R^{-2}|{\Ric}|^2-2\la\nabla \ln R, \nabla \left(v^{-1}R^{-1}\right)\ra.
\end{split}
\eee
Hence from \eqref{R decay}, \eqref{eqn for v}, and Lemma \ref{TR at infty}, we have
\be\label{eqn for vR-1}
\begin{split}
\Delta_{f-2\ln R}\left(v^{-1}R^{-1}\right) &\le \left(2|\nabla v|^2v^{-2}+2R^{-1}|{\Ric}|^2\right)v^{-1}R^{-1}\\
&\le \left(2v^{-1}+CR\right)v^{-1}R^{-1}\\
&\le C_0v^{-3/2}R^{-1}.
\end{split}
\ee
Let $a$ be a large positive constant to be determined. By \eqref{eqn for v} again, we have
\bee
\begin{split}
\Delta_{f} e^{av^{-1/2}}=-\frac{a}{2}v^{-1/2}e^{av^{-1/2}}+\left(\frac{a^2}{4}v^{-3}+\frac{3a}{4}v^{-5/2}\right)|\nabla v|^2e^{av^{-1/2}},
\end{split}
\eee
\bee
\begin{split}
2\la \nabla \ln R, \nabla e^{av^{-1/2}}\ra=-aR^{-1}\la \nabla R, \nabla v\ra v^{-3/2} e^{a v^{-1/2}}.
\end{split}
\eee
Hence
\be\label{eqn for eav}
\Delta_{f-2\ln R}\, e^{av^{-1/2}}\le -\frac{a}{2}v^{-1/2}e^{av^{-1/2}}+C(a+a^2)v^{-3/2}e^{av^{-1/2}}-aR^{-1}\la \nabla R, \nabla v\ra v^{-3/2} e^{a v^{-1/2}},
\ee
where $C$ is some constant independent on $a$.

On the other hand, we compute
\be\label{eqn for cross term}
\begin{split}
2\la \nabla e^{av^{-1/2}}, \nabla \left(v^{-1}R^{-1}\right)\ra&=av^{-3/2}e^{av^{-1/2}}|\nabla v|^2 v^{-2}R^{-1}
+av^{-3/2}e^{av^{-1/2}}\la\nabla v, \nabla R\ra v^{-1}R^{-2}\\
&=av^{-7/2}e^{av^{-1/2}}|\nabla v|^2R^{-1}+av^{-5/2}e^{av^{-1/2}}\la\nabla v, \nabla R\ra R^{-2}.
\end{split}
\ee
Thanks to \eqref{eqn for vR-1}, \eqref{eqn for eav} and \eqref{eqn for cross term}, we have
\bee
\begin{split}
\Delta_{f-2\ln R}\left(e^{av^{-1/2}}v^{-1}R^{-1}\right)&= e^{av^{-1/2}}\Delta_{f-2\ln R}\left(v^{-1}R^{-1}\right)+v^{-1}R^{-1}\Delta_{f-2\ln R}\left(e^{av^{-1/2}}\right)\\
&\,\,\,\,\,\,\, +2\la \nabla e^{av^{-1/2}}, \nabla \left(v^{-1}R^{-1}\right)\ra\\
&\le (C_0-a/2)v^{-3/2}e^{av^{-1/2}}R^{-1}+C(a+a^2)v^{-5/2}e^{av^{-1/2}}R^{-1}\\
&\,\,\,\,\,\,\, -a\la \nabla R, \nabla v\ra v^{-5/2} e^{a v^{-1/2}}R^{-2}+av^{-7/2}e^{av^{-1/2}}|\nabla v|^2R^{-1}\\
&\,\,\,\,\,\,\, +av^{-5/2}e^{av^{-1/2}}\la\nabla v, \nabla R\ra R^{-2}\\
&\le (C_0-a/2)v^{-3/2}e^{av^{-1/2}}R^{-1}+C(a+a^2)v^{-5/2}e^{av^{-1/2}}R^{-1}.
\end{split}
\eee
Hence, fixing $a\gg C_0$, outside a compact set $K_0$ (possibly depending on $a$), we have
\bee
\Delta_{f-2\ln R}\left(e^{av^{-1/2}}v^{-1}R^{-1}\right) \le -v^{-3/2}e^{av^{-1/2}}R^{-1}<0.
\eee
This finishes the proofs of formula \eqref{contrad ineq} and Theorem \ref{sec>0}.
\end{proof}
\end{proof}

Before proceeding to the proof of Corollary \ref{cone sec>0}, we investigate some basic geometric properties of a $C^2$ conical expanding soliton.
\begin{Proposition}\label{proper equiv} Let $(X^{n-1},g_X)$ be a closed Riemannian manifold of dimension $n-1$, where $n\ge 3$. Suppose that a complete noncompact expanding gradient Ricci soliton $(M^n, g, f)$ is $C^{2}$ asymptotic to $(C(X), g_C)$, where $g_C=dt^2+t^2g_X$. Then the following conditions hold:
\begin{enumerate}[(a)]
    \item $f$ is proper, i.e. $\lim_{x\to\infty} f=-\infty$;
    \item $\Ric\geq (\varepsilon_0-1/2)g$ near infinity for some positive constant $\varepsilon_0$;
    \item $\Ric\to 0$ at infinity (or $R\to 0$ at infinity if $n=3$).
\end{enumerate}
\end{Proposition}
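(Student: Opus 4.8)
The plan is to exploit the $C^2$-asymptotic convergence in Definition \ref{pAC expander} together with the two identities for the potential stated in \eqref{f in cone} to extract the three properties in order, using the earlier soliton identities as leverage. The key observation is that the defining decay estimate \eqref{cone derivative estimates for metric} controls $\phi^*g-g_C$ and its first two covariant derivatives by $C_l\,t^{-3\varepsilon-l}$, so on the cone end all curvature quantities of $g$ differ from the corresponding curvature quantities of the exact cone metric $g_C$ by terms that decay. Since $(C(X),dt^2+t^2g_X)$ has Ricci curvature $\Ric_{g_C}=O(t^{-2})$ (the radial Ricci vanishes and the spherical directions carry $\Ric$ of size $t^{-2}$ coming from $\Ric_{g_X}-(n-2)g_X$), we get $\Ric_g\to 0$ at infinity, and when $n=3$ this gives $R\to 0$; this establishes item (c). The main input is that covariant differentiation commutes with the convergence so that $\Ric_g=\Ric_{g_C}+O(t^{-3\varepsilon-2})$, which requires a standard but careful bookkeeping of how curvature is a second-order expression in the metric.

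For item (a), I would use the explicit formula $f\circ\phi(t,\omega)=-t^2/4+c_0$ from \eqref{f in cone}. This immediately shows that along the end, $f\to-\infty$ as $t\to\infty$; combined with the fact that $\phi$ is a diffeomorphism from the cone end onto $M\setminus K$, properness of $f$ follows once one checks that exhausting $M$ by compact sets corresponds to $t\to\infty$, which is guaranteed because $\phi^*g$ is uniformly comparable to $g_C$ and $t$ is a proper exhaustion function on the cone. Thus (a) is essentially a restatement of the normalization of $f$ in the definition.

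For item (b), the natural route is to combine (a) and (c) with the soliton structure. From (c), $\Ric_g\to 0$, so in particular $\liminf_{x\to\infty}\Ric>-1/2$, which is exactly the hypothesis discussed in the excerpt right before \eqref{lower bdd for -f}; hence $\Ric\geq(-1/2+\varepsilon_0)g$ near infinity for some $\varepsilon_0>0$. One must verify $\varepsilon_0$ can be taken positive, i.e. that $\Ric$ does not merely approach but stays above $-1/2$ by a definite gap; since $\Ric_g\to 0$ means $|\Ric_g|<\varepsilon_0$ near infinity for any fixed small $\varepsilon_0<1/2$, the bound $\Ric\geq-\varepsilon_0 g\geq(\varepsilon_0-1/2)g$ holds trivially on the end. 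I expect the main obstacle to be item (c): one must make rigorous that the $C^2$ bound on $\phi^*g-g_C$ in \eqref{cone derivative estimates for metric} translates into a decay bound on the full Ricci tensor of $g$ pulled back by $\phi$, accounting for the fact that curvature depends on the metric nonlinearly through Christoffel symbols and their derivatives. The cleanest way to handle this is to write $\Ric_g$ in terms of $\Ric_{g_C}$ plus an error that is a universal polynomial in $(g_C)^{-1}$, $\na_C(\phi^*g-g_C)$, and $\na_C^2(\phi^*g-g_C)$, then bound each error term using \eqref{cone derivative estimates for metric} and the scaling of $g_C$; care is needed because raising indices introduces factors of $t^{-2}$, and one must confirm these combine to give genuine decay rather than merely boundedness.
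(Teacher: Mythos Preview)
Your proposal is correct, and for items (c) and (b) it is essentially the paper's argument: the paper also deduces (c) from the $l=2$ estimate in \eqref{cone derivative estimates for metric} by writing $|{\Ric(g)}|_g \leq c|{\Ric(g_C)}|_{g_C}+O(t^{-2-3\varepsilon/2})=O(t^{-2})$, after first checking that $t\to\infty$ as $x\to\infty$ via the $l=0$ estimate; and (b) is then immediate from $\Ric\to 0$, exactly as you say.

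The one genuine difference is in (a). You read off properness directly from the normalization $f\circ\phi(t,\omega)=-t^2/4+c_0$ in \eqref{f in cone}, together with $t\to\infty$ at infinity. The paper instead proves the chain $(c)\Rightarrow(b)\Rightarrow(a)$: from $\Ric\geq(\varepsilon_0-1/2)g$ near infinity one gets $-\nabla^2 f\geq \varepsilon_0 g$ outside a compact set, and integrating along minimizing geodesics yields $-f\geq \varepsilon_0 r^2/2 - cr - c$, hence $f\to-\infty$. Your route is shorter for the bare properness statement; the paper's route buys the quantitative growth $-f\gtrsim r^2$, which is exactly the content of \eqref{lower bdd for -f} and is used later (e.g., in the proof of Corollary~\ref{cone sec>0}) to convert decay in $t$ into decay in $r$.
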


\begin{proof} It suffices to show that $(c)\implies (b) \implies (a)$ and lastly $(c)$ holds.\\
$(c)\implies (b):$ We only need to consider the case when $n=3$ and $\lim_{x\to\infty}R=0$. Under these assumptions, we may apply \cite[Theorems 11 and Theorem 12]{Cha20} to see that $\Ric\to 0$ as $x\to\infty$. $(b)$ follows.\\
\noindent
$(b)\implies (a):$ $\Ric\geq (\varepsilon_0-1/2)\,g$ and the soliton equation imply that $-\nabla^2 f\ge \varepsilon_0\, g$ outside a compact subset. Integrating the inequality along minimizing geodesics as in \eqref{lower bdd for -f} yields
\bee
-f\ge \varepsilon_0 r^2/2-cr-c \quad \text{  on  }\quad  M
\eee
for some positive constant $c$. It is then evident that $\lim_{x\to\infty}f=-\infty$.\\
\noindent

To see that $(c)$ holds, we recall that \eqref{cone derivative estimates for metric} implies 
\begin{equation*}
\left|\phi^*g-g_C\right|_{g_C}(t,\omega)\le C_0\, S_0^{-3\varepsilon}
\end{equation*}
and hence for any $S>S_0$, the set $\phi\big(\{(t,\omega):\, t\in(S_0,S],\, \omega\,\in X\}\big)$ is bounded in $(M,g)$, where $\varepsilon$ is the positive number in Definition \ref{pAC expander}. As a result, we see that $\lim_{x\to\infty}t=\infty$. By the conical estimate \eqref{cone derivative estimates for metric} with $l=2$, it holds that
\bee
\begin{split}
|{\Ric(g)}|_g &\leq c|{\Ric(g_C)}|_{g_C}+O(t^{-2-3\varepsilon/2})\\
            &\leq O(t^{-2})\to 0 \text{  as  } x\to\infty.
\end{split}
\eee
This complete the proofs of $(c)$ and Proposition \ref{proper equiv}.

\end{proof}

With the above preparation, we are about to prove Corollary \ref{cone sec>0}. For reader's convenience, we recall the statement of the corollary:

\begin{Corollary}Suppose that $(M^3, g, f)$ is a $3$-dimensional complete noncompact expanding gradient Ricci soliton which is 
$C^2$ asymptotic to a cone $\left(C(X^2), dt^2+ t^2g_X\right)$, where $(X^2, g_X)$ is a connected closed $2$ dimensional Riemannian manifold. 
Then the following are equivalent:
\begin{enumerate}[(1)]
    \item $M$ has nonnegative sectional curvature;
       \item $M$ has nonnegative scalar curvature;
     \item $C(X)$ has nonnegative scalar curvature;
      \item $\Rm(g_X)\ge 
     {\rm id}_{\Lambda^2 TX}$
\end{enumerate}
\end{Corollary}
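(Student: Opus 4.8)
The plan is to prove the four equivalences via a cycle of implications, exploiting the results already established for $3$-dimensional expanders and the conical structure guaranteed by Proposition \ref{proper equiv}. First I would observe that Proposition \ref{proper equiv} gives us, under the $C^2$-conical hypothesis, that $f$ is proper, $\Ric \geq (\varepsilon_0 - 1/2)g$ near infinity, and $R \to 0$ at infinity. The last fact is crucial: combined with the cone estimate \eqref{cone derivative estimates for metric}, it will let me quantify the decay rate of $R$ in terms of the distance function and thereby verify the decay hypotheses $-h_1 \leq R \leq h_2$ with $h_1 = o(r^{-2})$ and $h_2 = o(r^{-1})$ needed to invoke Theorem \ref{sec>0}.

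The main engine is the cycle $(4) \Rightarrow (3) \Rightarrow (2) \Rightarrow (1) \Rightarrow (2)$, together with $(2) \Rightarrow (4)$ to close the loop (I expect $(1) \Rightarrow (2)$ to be immediate, since nonnegative sectional curvature in dimension three forces nonnegative scalar curvature by tracing). For $(4) \Leftrightarrow (3)$, I would compute the scalar curvature of the cone $C(X^2)$ directly: for $g_C = dt^2 + t^2 g_X$ on a cone over a surface, the scalar curvature is $R_{g_C} = t^{-2}\bigl(R_{g_X} - (n-1)(n-2)\bigr)$, which in dimension $n=3$ (so $\dim X = 2$) becomes $R_{g_C} = t^{-2}(R_{g_X} - 2)$. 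Since $X$ is a surface, $\Rm(g_X) \geq \mathrm{id}_{\Lambda^2 TX}$ is equivalent to the Gauss curvature being $\geq 1$, i.e. $R_{g_X} \geq 2$; thus $R_{g_C} \geq 0$ if and only if $\Rm(g_X) \geq \mathrm{id}_{\Lambda^2 TX}$, giving $(3) \Leftrightarrow (4)$.

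For $(3) \Rightarrow (2)$, the idea is that the scalar curvature of the soliton is asymptotic to the scalar curvature of the cone: by the $C^2$ estimate \eqref{cone derivative estimates for metric} with $l \leq 2$, $R_g$ and $R_{g_C} \circ \phi^{-1}$ differ by a lower-order term of order $O(t^{-2-3\varepsilon/2})$, so the sign of $R_{g_C}$ controls the sign of $R_g$ near infinity to leading order. If $R_{g_C} \geq 0$ then $R_g \geq -o(r^{-2})$ near infinity, and by \cite[Theorem 6]{Cha20} (as invoked in the proof of Lemma \ref{TR at infty}) together with $R_g \to 0$, this forces $R_g \geq 0$ everywhere. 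The heart of the argument is $(2) \Rightarrow (1)$: here I would verify that nonnegative scalar curvature plus $R \to 0$ plus the conical decay supplies precisely the hypothesis \eqref{R decay} of Theorem \ref{sec>0}, whose conclusion gives nonnegative (indeed positive unless flat) sectional curvature.

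The step I expect to be the main obstacle is the reverse implication $(2) \Rightarrow (4)$ needed to close the cycle cleanly, or equivalently showing that the sign of the cone's scalar curvature is \emph{forced} by the soliton's rather than merely sufficient for it. The subtlety is that $R_g \geq 0$ on $M$ gives information about $R_{g_C}$ only through the asymptotic comparison, and one must rule out the possibility that $R_{g_X} < 2$ somewhere while $R_g$ stays nonnegative; the cleanest route is to use that $R_g \to 0$ at the rate $t^{-2}(R_{g_X} - 2) + o(t^{-2})$, so that if $R_{g_X}(\omega) < 2$ at some $\omega \in X$ then along the ray $\phi(t,\omega)$ one would have $R_g < 0$ for large $t$, contradicting $(2)$. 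Making this pointwise-in-$\omega$ comparison rigorous — ensuring the $o(t^{-2})$ error is uniform in $\omega \in X$, which follows from the $\sup_{\omega}$ in \eqref{cone derivative estimates for metric} — is the delicate point, but the compactness of $X$ makes it manageable.
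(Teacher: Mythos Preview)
Your proposal is correct and uses the same ingredients as the paper --- the cone scalar curvature formula $R_{g_C}=t^{-2}(R_{g_X}-2)$, the asymptotic comparison of $R_g$ with $R_{g_C}$ via \eqref{cone derivative estimates for metric}, Proposition~\ref{proper equiv}, and Theorem~\ref{sec>0} --- but arranges the cycle differently. The paper runs the clean four-cycle $(1)\Rightarrow(2)\Rightarrow(3)\Rightarrow(4)\Rightarrow(1)$: its hard step is $(4)\Rightarrow(1)$, where the hypothesis $R_{g_X}\ge 2$ is used to verify the lower bound $R\ge -O(t^{-2-3\varepsilon})=-o(r^{-2})$ in \eqref{R decay} before invoking Theorem~\ref{sec>0}. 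You instead hub everything at $(2)$ and prove $(2)\Rightarrow(1)$ directly, which is marginally slicker since $R\ge 0$ makes the lower bound in \eqref{R decay} trivial; the price is that you need the extra step $(3)\Rightarrow(2)$ via \cite[Theorem 6]{Cha20} (or equivalently your $(2)\Rightarrow(4)$), which the paper's cycle sidesteps. Your ``main obstacle'' $(2)\Rightarrow(4)$ is exactly the contrapositive of the paper's $(2)\Rightarrow(3)$, which the paper handles in one line by computing $\lim_{t\to\infty} 4v\,R_g\circ\phi(t,\omega)=R_{g_C}(1,\omega)$; the uniformity in $\omega$ that worried you is indeed immediate from the $\sup_\omega$ in \eqref{cone derivative estimates for metric} and compactness of $X$.
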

\begin{proof}
$(1) \implies (2)$ is immediate.\\
$(2) \implies (3):$ Let $\phi$ be the diffeomorphism in Definition \ref{pAC expander}. By $R_g\ge 0$ and \eqref{cone derivative estimates for metric}, for any $\omega \in X$, we have
\bee
0 \le \lim_{t\to\infty} 4v\circ \phi(t,\omega) R_g\circ \phi(t,\omega)=R_{g_C}(1,\omega).
\eee
Since $g_C=dt^2+t^2g_X$ is a warped product metric with warping function $t$, the scalar curvature satisfies \cite[Appendix A]{Li12} 
\bee
R_{g_C}(t,\omega)=R_{g_C}(1,\omega)/t^{2}\ge 0.
\eee
This proves $(3)$.\\
$(3) \implies (4):$ Since $X$ is of dimension $2$, $(4)$ is equivalent to $R_{g_X}\ge 2$. Thanks to $(3)$ and the properties of warped product \cite[Appendix A]{Li12}, we have for any $(t,\omega) \in C(X)$, 
\be\label{R of cone}
R_{g_C}(t,\omega)=\frac{R_{g_X}(\omega)-2}{t^2}.
\ee
In particular, $0\le R_{g_C}(1,\omega)=R_{g_X}(\omega)-2$ for any $\omega \in X$ and hence $(4)$ holds.\\
$(4) \implies (1):$ By our assumptions and $(4)$, $(M,g,f)$ is 
$C^2$ asymptotic to $(C(X), g_C)$, where $g_C=dt^2+t^2g_X$ and $\Rm(g_X)\ge 1$. Using Proposition \ref{proper equiv} (b) and \eqref{lower bdd for -f}, we have $-f\sim r^2$ near infinity. Moreover, we can find a diffeomorphism $\phi$ satisfying the conditions in Definition \ref{pAC expander}. 
By virtues of \eqref{cone derivative estimates for metric}, \eqref{f in cone} and \eqref{R of cone}, we have for all sufficiently large $t$,
\be\label{R in M vs R in cone}
\begin{split}
R_g\circ \phi(t,\omega)=R_{\phi^* g}(t,\omega)&= R_{g_C}(t,\omega)+O(t^{-3\varepsilon-2})\\
&=\left(R_{g_X}(\omega)-2\right)t^{-2}+O(t^{-3\varepsilon-2})\\
&\leq c\,t^{-2},
\end{split}
\ee
where $c$ is some positive constant independent on large $t$ and $\varepsilon$ is the positive number in Definition \ref{pAC expander}. We then use 
\eqref{f in cone}, \eqref{R in M vs R in cone}, and the fact $-f\sim r^2$ to conclude that outside some compact subset of $M$, it holds that 
\be\label{t vs r}
t\sim r\, ;
\ee
\be\label{for Coro R}
R_g\leq C r^{-2}=o(r^{-1}),
\ee
where $C$ is a positive constant. To apply Theorem \ref{sec>0}, It remains to justify the lower bound in \eqref{R decay}, i.e. $R_g\ge -o(r^{-2})$. 
Indeed, by \eqref{cone derivative estimates for metric}, It holds that
\bee
R_g\ge\left(R_{g_X}-2\right)t^{-2}-O(t^{-3\varepsilon-2})\ge -O(t^{-3\varepsilon-2}),
\eee
where we used $R_{g_X}\ge 2$ from assumption $(4)$ in the last inequality.
Hence by \eqref{t vs r} and \eqref{for Coro R}, we have $r^2R_g\ge-Cr^{-3\varepsilon}\to 0$ as $x\to\infty$ and \eqref{R decay} is satisfied. 
We may then invoke Theorem \ref{sec>0} and conclude that $M$ has nonnegative sectional curvature. This establishes $(1)$ and completes the proof of Corollary \ref{cone sec>0}.
\end{proof}
We shall end this section by proving another application of Theorem \ref{sec>0}:

\begin{proof}[\textbf{Proof of Corollary \ref{rot sym}}] As $\Rm(\a g_{\mathbb{S}^2})\ge 1/\alpha \ge 1$ for any $\alpha \in (0,1]$, it follows from Corollary \ref{cone sec>0} that $M$ is nonnegatively curved. By the strong maximum principle argument as in the proof of Theorem \ref{sec>0}, $M$ is either flat or has positive sectional curvature. Corollary \ref{rot sym} becomes obvious in the former case. Hence we may assume that $\Rm>0$ on $M^3$. The rotational symmetry of $M$ then follows from the result by Chodosh \cite[Theorem 1.2]{Cho14} (see also Theorem \ref{Cho result}). 
\end{proof}

\section{Curvature estimates in 4D steady soliton case}
Let $(M^4, g,f)$ be a $4$-dimensional complete steady gradient Ricci soliton, i.e.,
\be\label{steady soliton eqn}
\Ric+\nabla ^2 f=0.
\ee
By a result of Hamilton, it is well known that on a complete non-Ricci-flat (and hence noncompact) steady gradient Ricci soliton, upon scaling the metric if necessary, the following identity holds,
\be\label{ham id}
|\nabla f|^2+R=1.
\ee

Chen \cite{Che09} showed that the scalar curvature of any complete ancient Ricci flow must be nonnegative. Consequently, by the strong minimum principle, the scalar curvature of a steady gradient soliton is positive everywhere unless $M$ is Ricci flat. Recall that as in Section $2$, for any smooth function $\gamma$, the weighted laplacian $\Delta_{\gamma}$ is defined as
\bee
\Delta_{\gamma}:=\Delta-\nabla_{\na \gamma},
\eee
where $\Delta$ is the standard laplace operator. In dimension four, the curvature operator becomes more complicated than the $3$-dimensional case. 
We shall apply the techniques by Munteanu-Wang \cite{MW15} and carry out the dimension reduction via the level sets of the potential function $f$.
The crucial observation is that the curvature tensor in $\nabla f$ direction can be written as $\nabla {\Ric}$, namely, for any tangent vectors $X, Y$ and $Z$,

\be\label{Ric identity}
\na_Z\Ric(X,Y)-\na_Y\Ric(X,Z)=R(Z,Y,X,\na f).
\ee   
The above equation is a consequence of the soliton equation \eqref{steady soliton eqn} and the Ricci identity. It leads to the following lemma due to Munteanu-Wang \cite[Proposition 2.1]{MW15} which allows us to control the Riemann curvature tensor by the Ricci tensor and its derivatives.

\begin{Lemma} \cite{MW15} Suppose that $(M^4, g, f)$ is a $4$-dimensional gradient Ricci soliton. Then there exists a universal positive constant $A_0$, such that, if $\na f \neq 0$ at $q$ $\in M$, then
\be\label{Rm controlled by Rc}
|{\Rm}|(q)\leq A_0\left(|{\Ric}|(q)+\frac{|\na {\Ric}|}{|\na f|}(q)\right).
\ee
\end{Lemma}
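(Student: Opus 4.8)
The plan is to fix an orthonormal frame adapted to the gradient at $q$ and to sort the components of $\Rm$ according to how many of their indices lie along the normal direction $\mathbf{n}:=\na f/|\na f|$, which is well defined since $\na f\neq 0$ at $q$. Extending $\mathbf{n}$ to an orthonormal basis $\{e_1,e_2,e_3,\mathbf{n}\}$, the antisymmetry of the curvature tensor kills every component with three or four indices equal to $\mathbf{n}$, so only three blocks remain: the purely tangential block $R_{abcd}$ with $a,b,c,d\in\{1,2,3\}$, the one-normal components $R_{abc\mathbf{n}}$, and the two-normal components $R_{a\mathbf{n}c\mathbf{n}}$. The goal is to bound each block by the right-hand side of \eqref{Rm controlled by Rc} and then sum over the finitely many frame entries.

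First I would dispose of every block that contains at least one $\mathbf{n}$, using only the soliton Ricci identity \eqref{Ric identity}. Using the symmetries $R_{ijkl}=R_{klij}=-R_{jikl}$, I move one normal index into the last slot and substitute $\na f=|\na f|\,\mathbf{n}$. For the one-normal terms this gives $R_{abc\mathbf{n}}=|\na f|^{-1}\big(\na_{e_a}\Ric(e_c,e_b)-\na_{e_b}\Ric(e_c,e_a)\big)$, while for the two-normal terms, keeping the remaining $\mathbf{n}$ in the middle slot, $R_{a\mathbf{n}c\mathbf{n}}=|\na f|^{-1}\big(\na_{e_a}\Ric(e_c,\mathbf{n})-\na_{\mathbf{n}}\Ric(e_c,e_a)\big)$. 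In both cases the right-hand side is at most $2|\na\Ric|/|\na f|$, so these two blocks are already controlled.

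The main obstacle is the purely tangential block $R_{abcd}$, about which \eqref{Ric identity} says nothing, since the $\na f$ contraction there vanishes. Here dimension four is essential: the normal complement $\mathbf{n}^{\perp}$ is three-dimensional, so the restriction $R_{abcd}$, $a,b,c,d\le 3$, is an algebraic curvature tensor on a $3$-dimensional space and is therefore determined by its own contraction $\rho_{ac}:=\sum_{b=1}^{3}R_{abbc}$ through the three-dimensional identity of the form \eqref{Rm and Ric}; that identity also yields a universal bound $|R_{abcd}|\le C|\rho|$. It then remains to estimate $\rho$, and this is exactly where the previous step feeds back in: expanding the ambient Ricci tensor $R_{ac}=\sum_{j}R_{ajjc}$ as a contraction of $\Rm$ gives $R_{ac}=\rho_{ac}+R_{a\mathbf{n}\mathbf{n}c}$, and since $|R_{a\mathbf{n}\mathbf{n}c}|=|R_{a\mathbf{n}c\mathbf{n}}|$ was just bounded by $2|\na\Ric|/|\na f|$, we obtain $|\rho|\le |\Ric|+C|\na\Ric|/|\na f|$.

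Combining the three estimates and absorbing the finitely many combinatorial constants into a single universal $A_0$ yields $|\Rm|(q)\le A_0\big(|\Ric|(q)+|\na\Ric|(q)/|\na f|(q)\big)$, as claimed. I expect the three-dimensional linear-algebra reduction of the tangential block to be the only genuinely non-routine input; everything else is bookkeeping with the curvature symmetries and \eqref{Ric identity}. It is worth noting that the argument never uses the value of $\kappa$: differentiating the soliton equation annihilates the parallel term $\tfrac{\kappa}{2}g$, so \eqref{Ric identity} and hence the whole estimate hold for shrinking, steady, and expanding solitons alike.
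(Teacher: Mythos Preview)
Your argument is correct. The paper does not actually prove this lemma; it records the soliton Ricci identity \eqref{Ric identity} as ``the crucial observation'' and then simply cites \cite[Proposition~2.1]{MW15} for the statement. Your proof is precisely the argument that identity is meant to suggest and is, in substance, the Munteanu--Wang proof: split the frame components of $\Rm$ by the number of normal indices, kill the blocks with at least one $\mathbf{n}$ via \eqref{Ric identity}, and handle the purely tangential block by the three-dimensional algebraic identity \eqref{Rm and Ric} applied on $\mathbf{n}^{\perp}$, feeding the two-normal estimate back in through $\rho_{ac}=R_{ac}-R_{a\mathbf{n}\mathbf{n}c}$. Your closing remark that the value of $\kappa$ is irrelevant is also on point: the paper invokes the lemma for steady solitons while \cite{MW15} states it for shrinkers, and the reason it transfers is exactly the one you give.
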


\begin{Lemma} \label{Rc vs R}Let $(M^4,g,f)$ be a $4$-dimensional complete noncompact non-Ricci-flat steady gradient Ricci soliton with bounded Ricci curvature. Then there exists a positive constant $c_1$ such that
\be\label{RCR}
|{\Ric}|\leq c_1R\quad \text{  on  }\quad M.
\ee
\end{Lemma}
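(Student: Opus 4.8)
The plan is to prove $|{\Ric}| \le c_1 R$ on a $4$-dimensional complete non-Ricci-flat steady gradient Ricci soliton with bounded Ricci curvature by combining the elliptic equation for $\Ric$ with a maximum-principle argument analogous to the $3$-dimensional case in Lemma \ref{sec non-ve}, but now controlling the relevant quantity $|{\Ric}|/R$ (or equivalently a suitable eigenvalue quotient) using the extra structural information available in dimension four. First I would record that $R > 0$ everywhere, since $M$ is non-Ricci-flat and, by Chen's result together with the strong minimum principle (as recalled just before this lemma), the scalar curvature of a steady soliton is strictly positive unless $M$ is Ricci-flat. With $R > 0$ and $|{\Ric}|$ bounded, the quotient $|{\Ric}|/R$ is well-defined, and the task reduces to bounding it globally; it suffices to bound it near infinity, since on any compact set it is automatically finite.

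The key analytic input is the Bochner-type equation $\Delta_f \Ric = -2\,\Rm(\Ric, \cdot)$ for steady solitons, from which one derives, as in the derivation of \eqref{Ric sim eqn} and \eqref{neat nu1 eqn}, an elliptic inequality for the largest eigenvalue $\lambda_{\max}$ of $\Ric$ in the barrier sense. The strategy, following Munteanu-Wang \cite{MW15}, is to dimension-reduce via the level sets of $f$: using the Ricci identity \eqref{Ric identity}, the mixed components of $\Rm$ in the $\nabla f$ direction are controlled by $\nabla{\Ric}$, so on the level sets the curvature operator effectively behaves like a $3$-dimensional one, for which the sharp eigenvalue relations of $T = Rg - 2\Ric$ can be exploited. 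I would then consider the function $R^{-1}\lambda_{\max}$ (or a regularized version $R^{-1}|{\Ric}|$) and compute $\Delta_{f - 2\ln R}(R^{-1}\lambda_{\max})$ in the barrier sense, mirroring \eqref{semi eqn for nu1R}--\eqref{R-1nu eqn}, so that the right-hand side carries a favorable sign coming from the ordering of the eigenvalues, with error terms controlled by the bound $\sup_M |{\Ric}| < \infty$ and by \eqref{Rm controlled by Rc}.

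I expect the main obstacle to be the control of the error terms generated by the extra fourth dimension. In the $3$-dimensional case the tensor $T$ has an exact clean evolution \eqref{neat nu1 eqn}, and the cubic expression $\nu_2^2(\nu_3 - \nu_1) + \nu_3^2(\nu_2 - \nu_1)$ has a definite sign; in dimension four one only has an approximating tensor (the object $U$ referenced in the introduction, cf.\ \eqref{U def}) and the identity \eqref{Rm controlled by Rc} produces terms of the form $|\nabla{\Ric}|/|\nabla f|$ that must be absorbed. Near infinity $|\nabla f|^2 = 1 - R$ stays bounded below away from zero once $R < 1$ (which holds where $R$ is small by \eqref{ham id}), so $|\nabla f|$ is comparable to a constant there, and the derivative terms $|\nabla{\Ric}|$ can be handled through Shi-type estimates applied to the canonical form; the delicate point is ensuring that these error contributions are genuinely lower-order relative to the main negative term so that the maximum principle closes. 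Concretely, I would argue by contradiction: if $R^{-1}|{\Ric}|$ is not bounded, one extracts a sequence where it blows up, and after rescaling and passing to a limit (using the bounded-curvature hypothesis to guarantee convergence to a limiting soliton or splitting) the error terms vanish in the limit and the clean $3$-dimensional sign structure forces a contradiction, thereby yielding the uniform bound \eqref{RCR}.
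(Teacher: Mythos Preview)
Your outline does not match the paper's argument, and it contains a genuine circularity. The hypothesis of the lemma is bounded \emph{Ricci} curvature, not bounded $|{\Rm}|$, so you cannot invoke Shi-type estimates to control $|\nabla{\Ric}|$: Shi's estimates need a bound on $|{\Rm}|$, and in this paper the bound $|{\Rm}|\le cR$ (Theorem~\ref{Rm by R}) is established only \emph{after} Lemma~\ref{Rc vs R}, using it as an input. For the same reason your rescaling/blow-up contradiction is unjustified, since compactness of the rescaled flows is not available without a curvature bound. The level-set dimension reduction and the approximating tensor $U$ that you invoke are used later, for Theorem~\ref{lower bound for Rm in steady}, and rely on the extra hypotheses there (proper $f$, linear $R$-decay) together with the already-proved bound $|{\Rm}|\le cR$; none of this is available at the stage of Lemma~\ref{Rc vs R}.

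The paper's proof avoids all of this by a direct Omori--Yau type argument on the auxiliary function $u:=|{\Ric}|+|{\Ric}|^2-Q_1 R$. The point you are missing is that computing $\Delta_f\big(|{\Ric}|+|{\Ric}|^2\big)$ produces a \emph{good} gradient term $+2|\nabla{\Ric}|^2$, while the bad reaction term involves $|{\Rm}|\,|{\Ric}|^2$; by the Munteanu--Wang estimate \eqref{Rm controlled by Rc} this is bounded by $A_0\big(|{\Ric}|+|\nabla{\Ric}|/|\nabla f|\big)|{\Ric}|^2$, and on $\{|\nabla f|^2>\tfrac12\}$ the $|\nabla{\Ric}|$-piece is absorbed into the good gradient term by Cauchy--Schwarz (this is the computation from \cite{Cha19}). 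Combined with $\Delta_f R=-2|{\Ric}|^2$, one gets $\Delta_f u\ge 2|{\Ric}|^2$ on $\{|\nabla f|^2>\tfrac12\}$ for $Q_1$ large. The weak maximum principle of \cite{PRS11} then yields a maximizing sequence $x_k$; if it stays in $\{|\nabla f|^2>\tfrac12\}$ the inequality forces $|{\Ric}|(x_k)\to 0$ and hence $\sup u\le 0$, while if it enters $\{|\nabla f|^2\le\tfrac12\}$ then $R\ge\tfrac12$ there and $u\le L+L^2-Q_1/2<0$ by the Ricci bound. Either way $u\le 0$, i.e.\ $|{\Ric}|\le Q_1R$.
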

\begin{proof}By the boundedness of $|{\Ric}|$, we can find a finite positive constant $L$ such that 
\be\label{Ric bdd}
|{\Ric}|\leq L \quad\text{  on  } \quad M.
\ee
Using the computation in  \cite[p. 9001]{Cha19}, we see that wherever $\na f\neq 0$,
\bee
\begin{split}
\Delta_f (|{\Ric}|+|{\Ric}|^2) & \geq -\frac{A_0^2|{\Ric}|^2}{|\na f|^2}-2A_0|{\Ric}|^2-4A_0|{\Ric}|^3-\frac{4A_0^2|{\Ric}|^4}{|\na f|^2}.
\end{split}
\eee
Hence, by \eqref{Ric bdd}, one can find a constant $Q_0=Q_0(A_0, L)>0$ such that
\bee
\begin{split}
\Delta_f (|{\Ric}|+|{\Ric}|^2) & \geq -Q_0|{\Ric}|^2\quad\text{ on }\quad \left\{x\in M: |\na f|^2(x) > \tfrac{1}{2}\right\}.
\end{split}
\eee
Let $Q_1$ be another large positive constant such that
\be\label{choice of Q1}
\begin{split}
Q_1 & \geq \frac{Q_0}{2}+1 \text{  and }\\
\frac{Q_1}{2} & \geq L^2+L+1.
\end{split}
\ee
Letting $u:=|{\Ric}|+|{\Ric}|^2-Q_1R$, from \cite[(1.33)]{RFV1}, we have
\be\label{R eqn}
\D R=-2|{\Ric}|^2
\ee
and 
\be\label{diff ineq of u}
\Delta_f u\geq 2|{\Ric}|^2\quad\text{ on }\quad \left\{ |\na f|^2 > \tfrac{1}{2}\right\}.
\ee

If we can prove that $u\leq 0$ on $M$, then we are done with the lemma. Since the Ricci curvature is bounded, $u$ is also bounded. By a result of Pigola-Rimoldi-Setti \cite[Corollary 10]{PRS11}, the weak maximum principle of $\D$ holds on $M$ and there is a sequence $\{x_k\}_{k=1}^{\infty}$ on $M$ such that
\be\label{max seq}
u(x_k)  \geq \sup_{M}u -\frac{1}{k}\quad\text{  and }\quad \D u(x_k) \leq \frac{1}{k}.
\ee
Since the Ricci curvature is bounded from below and $|\na f|$ is uniformly bounded by \eqref{ham id}, the classical Omori-Yau maximum principle can also be used to obtain a sequence $\{x_k\}_{k=1}^{\infty}$ satisfying \eqref{max seq} (see \cite[Theorem 2.3]{AMP16}).\\
\\
\textbf{Case 1: }if $|\na f|^2(x_{k_j})\leq \frac{1}{2}$ for some subsequence $\{x_{k_j}\}_{j=1}^{\infty}$, then by \eqref{ham id}, \eqref{Ric bdd}, and \eqref{choice of Q1}, we see that $R(x_{k_j})\geq \frac{1}{2}$ and
\bee
\begin{split}
\sup_M u & \leq u(x_{k_j})+\frac{1}{k_j}\\
 & \leq L+ L^2-\frac{Q_1}{2}+\frac{1}{k_j}\\
& \leq -1+\frac{1}{k_j}.
\end{split}
\eee
Letting $j\to \infty$, we have $u\leq 0$ on $M$. \\
\\
\textbf{Case 2: }If $|\na f|^2(x_k)> \frac{1}{2}$ for all large $k$, then by the differential inequality \eqref{diff ineq of u} and \eqref{max seq}
$$2|{\Ric}|^2(x_k)\leq \D u(x_k)\leq \frac{1}{k}\to 0$$
and hence
$$\sup_M u=\lim_{k\to\infty}u(x_k)=0.$$
This finishes the proof of the lemma.
\end{proof}
With the above preparation, we are going to prove Theorem \ref{Rm by R}.
\begin{proof}[\textbf{Proof of Theorem \ref{Rm by R}:}] 
By our assumption, there is a finite constant $L_1>0$ such that
\be\label{Rm bdd}
|{\Rm}|\leq L_1\quad\text{ on }\quad M.
\ee
It is well known that in any gradient Ricci soliton, the gradient of the scalar curvature $\na R$ can be expressed as (see \cite[(1.27)]{RFV1})
\be\label{na R}
\na R=2\Ric(\na f).
\ee
Then by the calculation in \cite[p.9002]{Cha19}, \eqref{na R}, and Lemma \ref{Rc vs R}, there exist constants 
$c_3$ and $c'_3$ depending on $A_0$, $L$, $\lambda$, and $c_1$ in \eqref{RCR} such that
\be\label{diff ineq for RicR}
\begin{split}
\Delta_{f-2\ln R}\,\frac{|{\Ric}|^2}{R^2} & \geq \frac{|{\na\Ric}|^2}{2R^2}-\frac{4A_0|{\Ric}|^3}{R^2}-\frac{4A_0^2|{\Ric}|^4}{R^2|\na f|^2}-\frac{6|\na \ln R|^2|{\Ric}|^2}{R^2}\\
 & \geq \frac{|\na{\Ric}|^2}{2R^2}- c_3\\
 & \geq c' \left(\frac{|{\Rm}|}{R}+\lambda\frac{|{\Ric}|^2}{R^2}\right)^2-c_3'\quad \text{ on }\quad \left\{|\na f|^2> \tfrac{1}{2}\right\}
\end{split}
\ee
for all large  $\lambda$, where $c'=c'(A_0)$ is a positive constant. We also used \eqref{Rm controlled by Rc} in the last inequality. By \cite[(48)]{Cha19}, we have
\be\label{diff ineq for RmR}
\Delta_{f-2\ln R}\,\frac{|{\Rm}|}{R}\geq -5\frac{|{\Rm}|^2}{R}.
\ee
Combining \eqref{diff ineq for RicR} and \eqref{diff ineq for RmR}, we see that for all sufficiently large $\lambda$, we have

\be\label{large diff ineq}
\Delta_{f-2\ln R}\,W\geq W^2-c''\quad\text{ on }\quad\left\{|\na f|^2> \tfrac{1}{2}\right\},
\ee
where $W:=R^{-1}|{\Rm}|+\lambda R^{-2}|{\Ric}|^2$ and $c''=c''(A_0, L, ,\lambda, c_1)$ is some positive constant. We localize the function $W$ by considering $$G:=\phi^2W,$$ where  $\phi=\psi\left(\frac{d(x,p_0)}{\rho}\right)$, $\rho$ is a large positive number,  $\psi:[0,\infty)\longrightarrow [0,1]$ is a cut off function satisfying
\begin{equation*}
 \psi(t)=\begin{cases}
      1 & \text{  if  } t\leq 1; \\
      0 & \text{  if  } t>2 ,
   \end{cases}
\end{equation*}
$\psi'\leq 0$, and $|\psi'|+|\psi''|\leq c$ for some constant $c$. Hence we have
$$|\na \phi|\leq \frac{c}{\rho}.$$
Thanks to the Laplacian comparison theorem for smooth metric measure space \cite[Theorem 1.1]{WW09}, it holds that
\bee
\D r\leq \frac{3}{r}+1.
\eee
Then by \eqref{na R} and Lemma \ref{Rc vs R} , we have the estimate on the weighted Laplacian of $\phi$
\bee
\begin{split}
 \Delta_{f-2\ln R}\,\phi&=\D \phi+2\la \na\ln R, \na \phi\ra\\
&\geq \frac{\psi'}{\rho}\D r+\frac{\psi''}{\rho^2}|\na r|^2-\frac{4cc_1}{\rho}\\
&\geq -\frac{c}{\rho}\left(\frac{3}{\rho}+1\right)-\frac{c}{\rho^2}-\frac{4cc_1}{\rho}\\
&\geq -c
\end{split}
\eee
for all large $\rho$, see also \cite{Per02, WW09, Cha19}. It follows from \eqref{large diff ineq} that 
\be\label{diff ineq for G}
\begin{split}
\phi^2\Delta_{f-2\ln R}\, G =&\text{  }  \phi^4 \Delta_{f-2\ln R}\, W+\left(2\phi\Delta_{f-2\ln R}\,\phi+2|\na \phi|^2\right)G\\
&\text{  }+4\phi^3\la\na \phi, \na (\phi^{-2}G)\ra\\
=&\text{  } \phi^4 \Delta_{f-2\ln R}\, W+(2\phi\Delta_{f-2\ln R}\,\phi-6|\na \phi|^2)G\\
&\text{  }+4\phi\la\na \phi, \na G\ra\\
\geq&\text{  } \phi^4 W^2+(2\phi\Delta_{f-2\ln R}\,\phi-6|\na \phi|^2)G\\
&\text{  }+4\phi\la\na \phi, \na G\ra-c''\\
\geq&\text{  } G^2-c_4G-c_4+4\phi\la \na \phi, \na G\ra \quad \text{ on }\quad \left\{|\na f|^2> \tfrac{1}{2}\right\}
\end{split}
\ee
for some positive constant $c_4$ independent on $\rho$. If the maximum of $G$ is attained on $\{|\na f|^2> \frac{1}{2}\}$, then by the maximum principle and \eqref{diff ineq for G}, at the point where $G$ attains its maximum we have
$$0 \geq G^2-c_4G-c_4$$
and hence $G\leq c_5$ for some positive constant $c_5$ independent on $\rho$. If instead $G$ attains its maximum on $\{|\na f|^2\leq \frac{1}{2}\}$, then it follows from \eqref{ham id}, \eqref{Rm bdd}, and the definition of $G$ that $R\geq\frac{1}{2}$ and
$$G\leq 2L_1+4L^2\lambda.$$
By letting $\rho\to\infty$, we have $|{\Rm}|\leq c_2R$ for some constant $c_2>0$. This completes the proofs of Theorem \ref{Rm by R}.

\end{proof}

Under the conditions that the scalar curvature decays at least linearly, i.e. $R\leq C/(r+1)$ on $M$, we have, by \cite[Theorem 4.1]{CC20}, that $\Rm$ is bounded. It then follows from Theorem \ref{Rm by R} that 
\be\label{Rm C0 est r-1}
|{\Rm}|\le c_2 R\le c_2C/(r+1).
\ee 
Therefore we may invoke Shi's derivative estimates \cite{CLN06} to obtain the derivative estimates for $\Rm$, namely, for any nonnegative integer $k$, we have
\be\label{shi est}
|\nabla^k{\Rm}|\leq \frac{C_k}{(r+1)^{(k+2)/2}}.
\ee
In view of \eqref{Ric identity} and the derivative estimate \eqref{shi est}, we see that the $\Rm$ in $\nabla f$ direction is of the order $r^{-3/2}$ and hence we can restrict our consideration of $\Rm$ on the orthogonal complement of $\nabla f$, i.e. the tangent space of level set of $f$ which is of dimension three. This allows the use of Hamilton-Ivey estimate as in the previous section with error terms due to the curvature in $\nabla f$ direction. By our assumption in Theorem \ref{lower bound for Rm in steady}, $R\to 0$ as $x\to \infty$ and $f$ is proper. Hence outside a compact set, $|\nabla f|^2=1-R\ge 1/2$. Thanks to \cite[Lemma 2]{Cha19} and the properness of $f$, there is constant $c>0$ such that outside a compact subset, it holds that  
\be\label{f growth}
c^{-1}r\le v=-f\le cr.
\ee
Moreover, for all large $\tau\gg 1$, the level set of the potential $\Sigma:=\{f=-\tau\}$ is a compact hypersurface; this level set is also connected by a result of Munteanu-Wang \cite{MW11}. Hence there exist a compact set $K_0$ and a positive constant $\tau_0\gg 1$ such that $M\setminus K_0$ is foliated by the smooth and closed level sets, namely
\be\label{foliate}
M\setminus K_0=\bigcup_{\tau> \tau_0} \{f=-\tau\}
\ee
and $|\na f|^2\ge 1/2$ on $M\setminus K_0$. The second fundamental form $A_{\Sigma}$ of $\Sigma$ with respect to the normal $\nabla f/|\nabla f|$ is given by 
\bee
A_{\Sigma}=\nabla^2 f/|\nabla f|=-\Ric(g)/|\nabla f|=O(r^{-1}).
\eee
Let $\tg$ be the metric on $\Sigma$ induced by $g$. Let $\{e_i\}_{i=1}^4$ be an orthonormal frame such that $e_4=\nabla f/|\nabla f|$, which is well defined near infinity. As in \cite{MW15}, throughout this section, $a,b,c,d$ and $i,j,k,l$ shall denote the indices in $\{1,2,3\}$ and $\{1,2,3,4\}$, respectively. We begin with a $4$-dimensional analog of \eqref{Rm and Ric}; its shrinker version was proved by Munteanu-Wang \cite[Lemma 4.1]{MW15}.
\begin{Lemma} \label{Rm Ric 4d} Under the assumptions of Theorem \ref{lower bound for Rm in steady}, outside a compact subset of $M$, it holds that
\bee
\begin{split}
\Rm(g)_{abcd}&= \Ric(g)_{ad}g_{bc}+\Ric(g)_{bc}g_{ad}-\Ric(g)_{ac}g_{bd}-\Ric(g)_{bd}g_{ac}\\
& \,\,\, -R(g)\left(g_{ad}g_{bc}-g_{ac}g_{bd}\right)/2+O(r^{-3/2}),
\end{split}
\eee
where $\{e_i\}_{i=1}^4$ is any orthonormal frame such that $e_4=\nabla f/|\nabla f|$ and $a,b,c,d\in \{1,2,3\}$.
\end{Lemma}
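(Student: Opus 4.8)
The plan is to reduce the four-dimensional identity to the exact three-dimensional formula \eqref{Rm and Ric} by passing to the level set $\Sigma=\{f=-\tau\}$, which for $\tau\gg 1$ is a smooth closed three-dimensional hypersurface by the foliation \eqref{foliate}. Let $\tg$ be the induced metric and let $R^{\Sigma}$, $\Ric^{\Sigma}$, $\Rm^{\Sigma}$ denote the associated intrinsic curvatures. Since $\dim\Sigma=3$, the intrinsic Riemann tensor $\Rm^{\Sigma}$ is completely determined by $\Ric^{\Sigma}$ and $R^{\Sigma}$ through \eqref{Rm and Ric} applied verbatim to $(\Sigma,\tg)$. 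The strategy is therefore to (i) compare the ambient tangential curvature $\Rm(g)_{abcd}$ with the intrinsic $\Rm^{\Sigma}_{abcd}$ via the Gauss equation, (ii) insert the exact three-dimensional formula for $\Rm^{\Sigma}$, and (iii) replace $\Ric^{\Sigma}$ and $R^{\Sigma}$ by the ambient $\Ric(g)$ and $R(g)$, carefully tracking all error terms.

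For step (i), the Gauss equation gives $\Rm^{\Sigma}_{abcd}=\Rm(g)_{abcd}+O(|A_{\Sigma}|^2)$, and since $A_{\Sigma}=\na^2 f/|\na f|=-\Ric(g)/|\na f|=O(r^{-1})$ this discrepancy is $O(r^{-2})$, comfortably within the claimed error. For step (iii) I would use the contracted Gauss equation: tracing yields
\bee
\Ric^{\Sigma}_{ab}=\sum_{c=1}^{3}\Rm(g)_{acbc}+H A_{ab}-(A^2)_{ab}=\Ric(g)_{ab}-\Rm(g)_{a4b4}+O(r^{-2}),
\eee
where $H=\tr A_{\Sigma}$ and I have used $\sum_{i=1}^{4}\Rm(g)_{aibi}=\Ric(g)_{ab}$ to isolate the single mixed component $\Rm(g)_{a4b4}$. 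A second contraction gives $R^{\Sigma}=R(g)-\Ric(g)_{44}+O(r^{-2})$. Substituting $\Ric^{\Sigma}_{ab}=\Ric(g)_{ab}+O(r^{-3/2})$ and $R^{\Sigma}=R(g)+O(r^{-3/2})$ into \eqref{Rm and Ric} for $\Sigma$, together with $\tg_{ab}=g_{ab}$ on tangential indices and the bound from step (i), then produces the asserted formula.

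The heart of the argument, and the step I expect to be the main obstacle, is showing that the curvature components involving the normal direction $e_4=\na f/|\na f|$ decay like $r^{-3/2}$. For the mixed term I would invoke the Ricci-type identity \eqref{Ric identity}, which rewrites curvature in the $\na f$ direction as a first derivative of $\Ric$; since $|\na f|^2=1-R\ge \tfrac12$ near infinity (as $R\to 0$) and Shi's estimate \eqref{shi est} with $k=1$ gives $|\na\Rm|=O(r^{-3/2})$, hence $|\na\Ric|=O(r^{-3/2})$, it follows that $\Rm(g)_{a4b4}=O(r^{-3/2})$. For the normal Ricci component, the soliton identity \eqref{na R} gives $\Ric(g)_{44}=\tfrac12\la\na R,\na f\ra/|\na f|^{2}$, and since $|\na R|=O(r^{-3/2})$ this is again $O(r^{-3/2})$. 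The decisive bookkeeping point is precisely that these normal-direction contributions decay only like $r^{-3/2}$ and therefore dominate the $O(r^{-2})$ errors produced by the second fundamental form; collecting everything yields the stated $O(r^{-3/2})$ remainder. Finally, since \eqref{Rm C0 est r-1}, \eqref{shi est}, and $v=-f\sim r$ from \eqref{f growth} render all of these estimates uniform in $\tau$, the identity holds pointwise on $M$ outside a fixed compact set.
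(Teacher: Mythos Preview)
Your proposal is correct and follows essentially the same route as the paper: pass to the level set $\Sigma$ via the Gauss equation (error $O(r^{-2})$), invoke the exact three-dimensional decomposition \eqref{Rm and Ric} on $(\Sigma,\tg)$, and then replace $\Ric^{\Sigma}$, $R^{\Sigma}$ by the ambient $\Ric(g)$, $R(g)$ using the contracted Gauss equation together with the $O(r^{-3/2})$ bound on the normal-direction components coming from \eqref{Ric identity} and Shi's estimate \eqref{shi est}. The only cosmetic difference is that you bound $\Ric(g)_{44}$ via the soliton identity \eqref{na R} and $|\nabla R|=O(r^{-3/2})$, whereas the paper simply traces \eqref{level set Ric and R}; both are equivalent.
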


\begin{proof}
Let $K_0$ be the compact set as in \eqref{foliate}. For any $x\in M\setminus K_0$, $x$ belongs to $\Sigma:=\{f=-\tau\}$ for some large $\tau$. Let $\tg$ denote the induced metric on $\Sigma$. By the Gauss equation, we have
\be
\begin{split}\label{Rm of lev}
\Rm(g)_{abcd}&=\Rm(\tg)_{abcd}-f_{ad}f_{bc}/|\nabla f|^2+f_{ac}f_{bd}/|\nabla f|^2\\
&=\Rm(\tg)_{abcd}+O(r^{-2}).
\end{split}
\ee
Similarly, using \eqref{Ric identity} and \eqref{shi est}, we also have
\be\label{level set Ric and R}
\begin{split}
\Ric(g)_{ab}  &= \Ric(\tg)_{ab}+\Rm(g)_{a44b}+O(r^{-2})=\Ric(\tg)_{ab}+O(r^{-3/2}),\\
R(g)&=R(\tg)+O(r^{-3/2}).
            \end{split}
\ee
Moreover, $\Sigma$ is of dimension three. Hence, by \eqref{Rm and Ric}, \eqref{Rm of lev}, and \eqref{level set Ric and R}, we have
\bee
\begin{split}
\Rm(g)_{abcd}&=\Rm(\tg)_{abcd}+O(r^{-2})\\
&= \Ric(\tg)_{ad}\tg_{bc}+\Ric(\tg)_{bc}\tg_{ad}-\Ric(\tg)_{ac}\tg_{bd}-\Ric(\tg)_{bd}\tg_{ac}\\
& \,\,\, -R(\tg)\left(\tg_{ad}\tg_{bc}-\tg_{ac}\tg_{bd}\right)/2+O(r^{-2})\\
&= \Ric(g)_{ad}g_{bc}+\Ric(g)_{bc}g_{ad}-\Ric(g)_{ac}g_{bd}-\Ric(g)_{bd}g_{ac}\\
& \,\,\, -R(g)\left(g_{ad}g_{bc}-g_{ac}g_{bd}\right)/2+O(r^{-3/2}).
\end{split}
\eee
This finishes the proof of the lemma.
\end{proof}
In view of \eqref{Rm of lev}, to bound $\Rm(g)$ from below, it suffices to provide a quantitative lower bound of $\Rm(\tg)$ by considering  the tensor $T(\tg):=R(\tg)\tg-2\Ric(\tg)$ introduced in the previous section. Let 
\bee
\begin{split}
\tld_1\le \tld_2\le\tld_3& \text{  be the eigenvalues of  } \Ric(\tg),\\
\tnu_1\le \tnu_2\le\tnu_3& \text{  be the eigenvalues of  } T(\tg). 
\end{split}
\eee
Then it clearly follows from \eqref{T vs Ric} that 

\be\label{tnu lower bdd 1}
\tnu_1=R(\tg)-2\tld_3. 
\ee
We further let $U$ be the tensor which approximates $T(\tg)$
\begin{equation}\label{U def}
    U:=\big(R(g)-\Ric(g)_{44}\big)g-2\Ric(g);
\end{equation}
\be\label{nu and ld def}
\begin{split}
\ld_1\le \ld_2\le\ld_3& \text{  be the eigenvalues of  } \Ric(g)  \text{  when restricted on } T\Sigma,\\
\nu_1\le \nu_2\le\nu_3& \text{  be the eigenvalues of  } U \text{  when restricted on } T\Sigma,
\end{split}
\ee
where $T\Sigma$ denotes the tangent space of the level set $\Sigma$ of $f$. We also consider the trace of $U$ with respect to $\tg$
\bee
\overline{U}:=U_{aa}=\nu_1+\nu_2+\nu_3.
\eee
It follows from the definition of $\ld_i$ that $\overline{U}=R(g)-\Ric(g)_{44}=\ld_1+\ld_2+\ld_3$ is smooth as long as $\nabla f\neq 0$. By an argument similar to \eqref{T vs Ric}, we have 
\be\label{U vs Ric on lev}
\begin{split}
    \nu_1&=\lambda_1+\lambda_2-\lambda_3=\overline{U}-2\lambda_3\\
    \nu_2&=\lambda_1+\lambda_3-\lambda_2=\overline{U}-2\lambda_2\\
    \nu_3&=\lambda_2+\lambda_3-\lambda_1=\overline{U}-2\lambda_1.
\end{split}
\ee
By virtues of \eqref{shi est}, \eqref{level set Ric and R}, and \eqref{tnu lower bdd 1}, it can be seen that $\Ric(g)_{44}=O(r^{-3/2})$, $\tld_3\leq \ld_3+O(r^{-3/2})$ and 
\be\label{tnu lower bdd}
\begin{split}
\tnu_1&\ge R(g)-2\tld_3-O(r^{-3/2})\\
      &\ge R(g)-2\ld_3-O(r^{-3/2})\\
      &\ge \nu_1-O(r^{-3/2}).
\end{split}
\ee
Our goal is to bound $\nu_1$ from below near infinity by the maximum principle. We first compute the differential inequality satisfied by $\nu_1$. The computations involved in the proof are parallel to those in the proof of Lemma \ref{TR at infty} except for the error terms due to the extra dimension.

\begin{Lemma} $\nu_1$ satisfies the following differential equation in the barrier sense,
\bee
\Delta_f \nu_1\le -\nu_1^2-\nu_2\nu_3+O(r^{-5/2}).
\eee
\end{Lemma}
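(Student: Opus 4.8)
The plan is to replicate, on the level set $\Sigma=\{f=-\tau\}$ and with the ambient drift Laplacian $\Delta_f$, the three-dimensional computation that produced \eqref{neat nu1 eqn}, while treating every contribution of the normal direction $e_4=\nabla f/|\nabla f|$ as an error term. The key structural fact is that $\nu_1=\overline{U}-2\lambda_3$, where $\overline{U}=R(g)-\Ric(g)_{44}$ is a genuine smooth function near infinity and $\lambda_3$ is the largest eigenvalue of $\Ric(g)|_{T\Sigma}$ as in \eqref{nu and ld def}. Thus an upper barrier for $\nu_1$ is furnished by the ordinary Laplacian of $\overline{U}$ together with a lower barrier for $\lambda_3$, so I never need an upper barrier for a maximal eigenvalue.

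First I would start from the steady Ricci evolution $\Delta_f R_{il}=-2R_{ijkl}R_{jk}$ (the $\kappa=0$ case of the Ricci evolution used for \eqref{Ric sim eqn}) and restrict $i,l$ to $a,b\in\{1,2,3\}$. Splitting the contraction $R_{ajkb}R_{jk}$ into its $T\Sigma$-part and the terms carrying the index $4$, the latter are dominated by $R_{a44b}\Ric(g)_{44}=O(r^{-1})O(r^{-3/2})$, hence $O(r^{-5/2})$, by \eqref{Ric identity}, \eqref{shi est} and $|\Ric|=O(r^{-1})$. Feeding in Lemma \ref{Rm Ric 4d} to replace $R_{acdb}$ by its three-dimensional Gauss-type expression (the $O(r^{-3/2})$ error there multiplies $\Ric=O(r^{-1})$, so it is harmless), and absorbing $R(g)-\overline{U}=\Ric(g)_{44}=O(r^{-3/2})$ into the error, I obtain the steady analogue of \eqref{Ric sim eqn} with $R$ replaced by the partial trace $\overline{U}$ and with the $-R_{ab}$ term absent:
\begin{equation*}
\Delta_f R_{ab}=-3\overline{U}R_{ab}+4R_{ac}R_{cb}+\overline{U}^2 g_{ab}-2|\Ric_\Sigma|^2 g_{ab}+O(r^{-5/2}),\qquad |\Ric_\Sigma|^2:=\sum_{a,b=1}^3 R_{ab}^2 .
\end{equation*}
Tracing over $a=b$ gives $\Delta_f\overline{U}=-2|\Ric_\Sigma|^2+O(r^{-5/2})$, while freezing a top eigenvector of $\Ric(g)|_{T\Sigma}$ and evaluating the tensor identity on it yields, in the barrier sense,
\begin{equation*}
\Delta_f(2\lambda_3)\ge -6\overline{U}\lambda_3+8\lambda_3^2+2\overline{U}^2-4|\Ric_\Sigma|^2+O(r^{-5/2}).
\end{equation*}
Subtracting, $\Delta_f\nu_1=\Delta_f\overline{U}-\Delta_f(2\lambda_3)\le 6\overline{U}\lambda_3-2\overline{U}^2-8\lambda_3^2+2|\Ric_\Sigma|^2+O(r^{-5/2})$, and the purely algebraic identity already used between \eqref{eqn for nu} and \eqref{neat nu1 eqn} --- now with $\overline{U}$ in place of $R$ and with $\nu_1,\nu_2,\nu_3$ as in \eqref{U vs Ric on lev} --- rewrites the main term as $-\nu_1^2-\nu_2\nu_3$, giving the claim.

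The main obstacle is the honest verification that every error is genuinely $O(r^{-5/2})$. Apart from the normal-curvature terms above, differentiating a tensor that lives on the moving subbundle $T\Sigma$ generates commutator terms governed by the variation of $e_4$, i.e.\ by the second fundamental form $A_\Sigma=-\Ric(g)/|\nabla f|=O(r^{-1})$ and its derivative $\nabla A_\Sigma=O(r^{-3/2})$ (using $\nabla|\nabla f|^2=-\nabla R=O(r^{-3/2})$). These enter as $A_\Sigma\cdot\nabla\Ric$, $\nabla A_\Sigma\cdot\Ric$ and $A_\Sigma^2\cdot\Ric$, all of which sit inside the $O(r^{-5/2})$ budget by \eqref{shi est}. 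Organizing this bookkeeping so that the frame non-parallelism, the gap $R-\overline{U}$, and the difference between $|\Ric|^2$ and its restriction $|\Ric_\Sigma|^2$ are swept uniformly into the error is the delicate part; once this is in place, the algebra reducing $6\overline{U}\lambda_3-2\overline{U}^2-8\lambda_3^2+2|\Ric_\Sigma|^2$ to $-\nu_1^2-\nu_2\nu_3$ is verbatim the three-dimensional one.
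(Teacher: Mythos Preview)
Your proposal is correct and follows essentially the same route as the paper: write $\nu_1=\overline{U}-2\lambda_3$, derive the tangential Ricci equation from $\Delta_f R_{il}=-2R_{ijkl}R_{jk}$ using Lemma~\ref{Rm Ric 4d} with all index-4 contributions thrown into $O(r^{-5/2})$, obtain a lower barrier for $\lambda_3$ and the equation for $\overline{U}$, subtract, and then reuse verbatim the three-dimensional algebra between \eqref{eqn for nu} and \eqref{neat nu1 eqn}. The only organizational difference is that the paper gets $\Delta_f\overline{U}=-2R_{cd}R_{cd}+O(r^{-5/2})$ by computing $\Delta_f(R_{44})$ directly from $R_{44}=\Ric(\nabla f,\nabla f)/|\nabla f|^2$ and the identities $2\Ric(\nabla f)=\nabla R$, $\Delta_f R=-2|\Ric|^2$, rather than by tracing the tangential tensor equation; and for the $\lambda_3$ barrier it explicitly builds the test function $\Psi=\Ric(Q^T,Q^T)/|Q^T|^2$ with a parallel-at-$q$ vector $Q$ projected onto $T\Sigma$, checking that $\nabla_jQ^T=O(r^{-1})$ and $\nabla_j\nabla_jQ^T=O(r^{-3/2})$, which is exactly the bookkeeping you allude to in your final paragraph.
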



\begin{proof} For the sake of simplicity, in the rest of this section, the connections, norms, and various curvature quantities are taken with respect to the soliton metric $g$ unless specified. As metioned above, $\{e_i\}_{i=1}^4$ is an orthonormal frame with $e_4=\nabla f/|\nabla f|$. Let $a,b,c,d\in \{1,2,3\}$ and $i,j,k,l \in \{1,2,3,4\}$. $\{e_a\}_{a=1}^3$ can be further chosen such that at the point under consideration, $R_{ab}=\ld_a \delta_{ab}$. By the differential equation of $\Ric$ \cite[Lemma 2.1]{PW10}, \eqref{Ric identity}, a computation similar to \eqref{Ric sim eqn}, and Lemma \ref{Rm Ric 4d}, it holds that
\bee
\begin{split}
\Delta_f R_{ab}&=-2R_{aijb}R_{ij}\\
&=-2R_{acdb}R_{cd}+O(r^{-5/2})\\
&=-2\overline{U}R_{ab}-2R_{cd}R_{cd}g_{ab}+4R_{ac}R_{bc}+\overline{U}^2g_{ab}-\overline{U}R_{ab}+O(r^{-5/2})\\
&=-3\overline{U}R_{ab}-2R_{cd}R_{cd}g_{ab}+4R_{ac}R_{bc}+\overline{U}^2g_{ab}+O(r^{-5/2}),
\end{split}
\eee
where $\nu_i$ and $\ld_i$ are given by \eqref{nu and ld def}. We first claim that in the barrier sense, it holds that
\be\label{ld3 eqn barri}
\Delta_f\ld_3\ge -3\overline{U}\ld_3-2R_{cd}R_{cd}+4\ld_3^2+\overline{U}^2+O(r^{-5/2})
\ee
outside a compact set. We include the details here, since $\ld_3$ is an eigenvalue of $\Ric$ restricted on $T\Sigma$, and extra care needs to be taken.
\begin{proof}[Proof of Claim \eqref{ld3 eqn barri}:]
At any point where $\nabla f\neq 0$, let $W$ be a tangent vector of $M$. We denote the orthogonal projection of $W$ onto $\{\nabla f\}^{\perp}$ by $W^T$, i.e.
\bee
W^{T}:=W-\la W,\nabla f\ra \nabla f/|\nabla f|^2.
\eee
Let $q$ be any point near infinity with $\nabla f\neq 0$. We choose a smooth vector field $Q$ of $M$ near $q$ as in \cite{RFV2} such that 
$Q(q) \in T_q\Sigma$, $Q(q)=e_3(q)$ and thus $\left[\Ric(Q(q))\right]^T=\ld_3 Q(q)$. Moreover, fixing any $j\in\{1,2,3,4\}$, we have
\be\label{property of Q}
\left(\nabla_j Q\right)(q)=0\quad \text{  and  }\quad \left(\nabla_j\nabla_j Q\right)(q)=0.
\ee
Since $Q$ is not necessarily a section of $T\Sigma$ near $q$, $\Ric(Q,Q)$ may not be a barrier function required. Instead, we let $\Psi$ be the smooth function $\Ric(Q^T,Q^T)/|Q^T|^2$ near $q$. By the choice of $Q$, we have $Q^T(q)=Q(q)$ and $\Psi\leq \ld_3$ near $q$ with equality holding at $q$. Furthermore, it follows from \eqref{na R}, \eqref{shi est}, and \eqref{property of Q} that at the point $q$ we have

\begin{equation*}
\nabla_j \left(Q^T\right)=\Ric(Q, e_j)\nabla f/|\nabla f|^2=\begin{cases}
      O(r^{-1})  & \text{  if  } j=1,2,3;\\
      O(r^{-3/2}) & \text{  if  } j=4.
   \end{cases}
\end{equation*}
and for $j=1, 2, 3, 4$,
\bee
\nabla_j\nabla_j \left(Q^T\right)=O(r^{-3/2}) 
\eee
One may check using the above derivative estimates of $Q^T$ to see that at the point $q$ we have
\[
\Delta_f \Psi\ge -3\overline{U}\ld_3-2R_{cd}R_{cd}+4\ld_3^2+\overline{U}^2+O(r^{-5/2}).
\]
This justifies Assertion \eqref{ld3 eqn barri}.
\end{proof}
Next We claim that 
\be\label{eqn for bar U}
\Delta_f \overline{U}=-2R_{cd}R_{cd}+O(r^{-5/2})
\ee
\begin{proof}[Proof of Claim \eqref{eqn for bar U}:]
Since $\overline{U}=R-R_{44}$, it can be seen from \eqref{R eqn}, \eqref{na R} and \eqref{shi est} that
\begin{eqnarray*}
\Delta_f \overline{U}&=&\Delta_f R-\Delta_f (R_{44})\\
&=&-2|{\Ric}|^2-\Delta_f (R_{44})\\
&=&-2R_{cd}R_{cd}-\Delta_f (R_{44})+O(r^{-5/2}).
\end{eqnarray*}
Using the facts that $2{\Ric}(\nabla f)=\nabla R$ \eqref{na R} and $2{\Ric}(\nabla f,\nabla f)=\la \nabla R, \nabla f\ra=\Delta R+2|{\Ric}|^2$ \eqref{R eqn}, 
we have
\begin{eqnarray*}
\Delta_f (R_{44})&=&\Delta_f\left(\frac{\Ric(\nabla f, \nabla f)}{|\nabla f|^2}\right)\\
&=& |\nabla f|^{-2}\Delta_f\left(\Ric(\nabla f,\nabla f)\right)+\Ric(\nabla f,\nabla f)|\nabla f|^{-4}\Delta_f R\\
&&+2\Ric(\nabla f,\nabla f)|\nabla f|^{-6}|\nabla R|^2+2\la \nabla \left(\Ric(\nabla f,\nabla f)\right), \nabla R \ra |\nabla f|^{-4}\\
&=&|\nabla f|^{-2}\Delta_f\left(\Ric(\nabla f,\nabla f)\right)-|\nabla f|^{-4}|{\Ric}|^2\left(\Delta R+2|{\Ric}|^2\right)\\
&&+|\nabla f|^{-6}|\nabla R|^2\left(\Delta R+2|{\Ric}|^2\right)+ |\nabla f|^{-4}\la \nabla \Delta R+4\la\nabla\Ric,\Ric\ra, \nabla R \ra\\
&=&|\nabla f|^{-2}\Delta_f\left(\Ric(\nabla f,\nabla f)\right)+O(r^{-4}).\\
\end{eqnarray*}
By a computation of Petersen-Wylie \cite[Lemma 2.4]{PW10}, \eqref{shi est}, and \eqref{Ric identity}, we have
\begin{eqnarray*}
\Delta_f \left(\Ric(\nabla f,\nabla f)\right)&=& -4\la \nabla_{\nabla f} \Ric, \Ric\ra +2\Ric(\nabla_i\nabla f, \nabla_i\nabla f)+2R_{4ij4}R_{ij}|\na f|^2\\
&=& O(r^{-5/2})+2R_{jk}R_{ij}R_{ik}+2(R_{ij,4}-R_{j4,i})R_{ij}|\nabla f|\\
&=& O(r^{-5/2}).
\end{eqnarray*}
Thus $\Delta_f (R_{44})=O(r^{-5/2})$ and Claim \eqref{eqn for bar U} follows.
\end{proof}
Hence by Claims \eqref{ld3 eqn barri} and \eqref{eqn for bar U}, in the barrier sense
\be\label{long eqn nu 4d}
\Delta_f \nu_1=\Delta_f(\overline{U}-2\ld_3)\le 6\overline{U}\ld_3-2\overline{U}^2-8\ld_3^2+2R_{cd}R_{cd}+O(r^{-5/2}).
\ee
Thanks to \eqref{U vs Ric on lev} and a computation similar to the proof of \eqref{neat nu1 eqn}, we have
\bee
\begin{split}
6\overline{U}\ld_3-2\overline{U}^2-8\ld_3^2+2R_{cd}R_{cd}&=6(\nu_1+\nu_2+\nu_3)(\nu_2+\nu_3)/2-2(\nu_1+\nu_2+\nu_3)^2\\
&\,\,\,-2(\nu_2+\nu_3)^2+\left[(\nu_1+\nu_2)^2+(\nu_2+\nu_3)^2+(\nu_1+\nu_3)^2\right]/2\\
&= -\nu_1^2-\nu_2\nu_3.
\end{split}
\eee
We then rewrite \eqref{long eqn nu 4d} as the required inequality
\bee
\Delta_f \nu_1\le-\nu_1^2-\nu_2\nu_3+O(r^{-5/2}).
\eee

\end{proof}

Hence by \eqref{Rm of lev} and \eqref{tnu lower bdd}, Theorem \ref{lower bound for Rm in steady} is a consequence of the following proposition. 
\begin{Proposition}\label{nu1 low bdd prop}
Under the above notations and the same assumptions in Theorem \ref{lower bound for Rm in steady}, the following inequality holds outside a compact set, we have
\bee
\nu_1\ge -Cr^{-3/2},
\eee
where $C$ is a positive constant.
\end{Proposition}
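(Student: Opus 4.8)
The plan is to reduce the statement to a lower bound for the \emph{normalized} smallest eigenvalue $\Phi:=\overline{U}^{-1}\nu_1$, in exact analogy with Lemma \ref{sec non-ve} and the proof of Theorem \ref{sec>0}, but now carrying along the $O(r^{-5/2})$ errors produced by the extra $\na f$-direction. First I would record the crude bounds at hand: from $|{\Rm}|\le c_2R\le C/(r+1)$ and \eqref{shi est} one gets $|\nu_i|\le Cr^{-1}$, and since $\overline{U}=R-R_{44}=R+O(r^{-3/2})$ with $R>0$, the trace $\overline{U}$ is comparable to $R$ and positive outside a compact set, with $\overline{U}^{-1}=O(r)$ and $3\nu_3\ge\overline{U}>0$. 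I would also use that $v=-f\sim r$ with $|\na v|^2=1-R\to1$ and $\D v=1$, so that $v$ behaves like the distance and $v^{-p}$ furnishes good barriers, $\D v^{-p}=-p\,v^{-p-1}+O(v^{-p-2})$. The point of the normalization is that $\nu_1\ge -Cr^{-3/2}$ is equivalent to $\Phi\ge -Cv^{-1/2}$.

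Next I would upgrade the Lemma's inequality $\D\nu_1\le -\nu_1^2-\nu_2\nu_3+O(r^{-5/2})$ to a clean inequality for $\Phi$. Combining it with $\D\overline{U}=-2R_{cd}R_{cd}+O(r^{-5/2})$ from \eqref{eqn for bar U} and proceeding exactly as in \eqref{eqn for rec R}--\eqref{R-1nu eqn} (the term $2\overline{U}^{-3}|\na\overline{U}|^2\nu_1$ cancelling against the cross term, as there), I expect to reach
\[
\Delta_{f-2\ln\overline{U}}\,\Phi\le -\overline{U}^{-2}\big[\nu_2^2(\nu_3-\nu_1)+\nu_3^2(\nu_2-\nu_1)\big]+E,
\]
where the bracket is nonnegative because $\nu_1\le\nu_2\le\nu_3$, so the leading term has the favorable sign, and $E$ collects the extra-dimensional errors. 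Using $\overline{U}^{-1}=O(r)$, $\nu_1\overline{U}^{-2}=O(r)$, and the Shi bound $|\na\overline{U}|=O(r^{-3/2})$, the expectation is $E=O(r^{-3/2})=O(v^{-3/2})$.

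The third step is the barrier/maximum–principle argument, and this is where the sharp rate must be extracted. Since $\Delta_{f-2\ln\overline{U}}v^{-1/2}=-\tfrac12 v^{-3/2}+O(v^{-2})$ (the drift $2\na\ln\overline{U}$ being $O(r^{-1/2})$, hence harmless), the function $G:=\Phi+Av^{-1/2}$ satisfies $\Delta_{f-2\ln\overline{U}}G\le\big(E-\tfrac{A}{2}v^{-3/2}\big)\le 0$ outside a compact set once $A$ is large. I would then run the device used in the proof of Theorem \ref{sec>0}: form a family $Q_k$ by adding to $G$ a $k^{-1}$-multiple of a positive supersolution, note that a strict supersolution cannot realize an interior negative minimum so that the relevant infimum is forced into a fixed compact set, and finally evaluate the clean inequality at that (near-)minimum. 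There $0\le \Delta_{f-2\ln\overline{U}}\Phi$ forces $\nu_2^2(\nu_3-\nu_1)+\nu_3^2(\nu_2-\nu_1)\le\overline{U}^2E$, and the eigenvalue analysis of Lemma \ref{sec non-ve}, now using $3\nu_3\ge\overline{U}>0$, contradicts $\Phi$ being more negative than $-Cv^{-1/2}$; this yields $\Phi\ge -Cv^{-1/2}$, i.e.\ $\nu_1\ge -C\overline{U}\,v^{-1/2}\ge -Cr^{-3/2}$ outside a compact set.

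The genuinely delicate points, and the ones I expect to occupy most of the write-up, are two. \emph{(i)} Verifying that $E$ really is no larger than $O(v^{-3/2})$, i.e.\ that every term generated by the $\na f$-direction and by the normalization by $\overline{U}$ (in particular $\overline{U}^{-1}O(r^{-5/2})$ and $\nu_1\overline{U}^{-2}O(r^{-5/2})$) stays at or below the barrier scale; this is the ``delicate estimates of the error terms'' alluded to in the introduction, and it is where the steady case differs from the clean three-dimensional computation \eqref{R-1nu eqn}. \emph{(ii)} Upgrading the crude bound $\nu_1\ge -Cr^{-1}$ to the sharp $\nu_1\ge -Cr^{-3/2}$: a maximum principle by itself only propagates the crude bound from the boundary, so it is the nonnegative term $\nu_2^2(\nu_3-\nu_1)+\nu_3^2(\nu_2-\nu_1)$ together with $3\nu_3\ge\overline{U}>0$ that must force the improvement, exactly as in Munteanu--Wang \cite{MW15} and in the three-dimensional argument above. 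With Proposition \ref{nu1 low bdd prop} in hand, \eqref{tnu lower bdd} and \eqref{Rm of lev} then deliver the lower bound for $\Rm(g)$ claimed in Theorem \ref{lower bound for Rm in steady}.
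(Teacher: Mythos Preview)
Your overall plan is essentially the paper's: normalize $\nu_1$ by a scalar-like quantity, derive the weighted differential inequality with the nonnegative bracket $\nu_2^2(\nu_3-\nu_1)+\nu_3^2(\nu_2-\nu_1)$ plus lower-order errors, absorb the error with a $v^{-1/2}$ barrier, and finish with a maximum principle exploiting the bracket. The choice to normalize by $\overline{U}$ rather than $R$ (the paper's choice) is harmless since $\overline{U}=R+O(r^{-3/2})$.

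There is, however, a genuine gap. You assert $\overline{U}^{-1}=O(r)$ and then $E=O(v^{-3/2})$, but the hypotheses of Theorem \ref{lower bound for Rm in steady} give only the \emph{upper} bound $R\le C/(r+1)$, not a matching lower bound. The paper handles this by invoking the dichotomy of \cite[Theorem 1.3]{CZh21}: either $R$ decays exponentially, in which case $|\nu_1|\le cR\le Ce^{-r}$ and the proposition is trivial, or else $C^{-1}r^{-1}\le R\le Cr^{-1}$. Only in the latter alternative do the error terms $O(R^{-1}r^{-5/2})$ and $O(R^{-2}r^{-7/2})$ collapse to $O(v^{-3/2})$, and only then does the $v^{-1/2}$ barrier absorb them. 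Without this step your error control, and also your later use of $R^{-1}=O(r)$ in the eigenvalue analysis at the minimum, is unjustified.

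A second, smaller point: your final maximum-principle paragraph conflates two different devices. The ``force the minimum into a fixed compact set'' trick from the proof of Theorem \ref{sec>0} relied on the a priori boundary condition $\liminf_{x\to\infty}R^{-1}\nu_1\ge 0$, which is unavailable here; adding a $k^{-1}$-multiple of a \emph{supersolution} (which decays at infinity) therefore does not force a minimum. The paper instead adds $k^{-1}\ln v$, which is \emph{not} a supersolution but satisfies $\Delta_{f-2\ln R}\ln v\le 2v^{-1}$ and tends to $+\infty$, forcing a minimum at some $q_k$; it then uses the bracket inequality together with $3\nu_3\ge\overline{U}\ge R/2$ and the linear lower bound on $R$ to extract the quantitative estimate $R^{-1}|\nu_1|(q_k)\le C_2 k^{-1/2}$, and lets $k\to\infty$ to obtain $R^{-1}\nu_1+8C_1 v^{-1/2}\ge 0$. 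Your sketch gestures at this eigenvalue analysis but wraps it inside the wrong localization device.
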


\begin{proof}

Using  \eqref{R eqn}, we see that $\Delta_f R^{-1}=2R^{-2}|{\Ric}|^2+2R^{-1}|\nabla \ln R|^2$ and
\be\label{rough eqn r-nu 4d}
\begin{split}
\Delta_f\left(R^{-1}\nu_1\right)&=R^{-1}\Delta_f \nu_1+\nu_1\Delta_f R^{-1}+2\la \nabla R^{-1},\nabla \left(RR^{-1}\nu_1\right)\ra \\
&=R^{-1}\Delta_f \nu_1+\nu_1\Delta_f R^{-1}-2\la\nabla \ln R, \nabla\left(R^{-1}\nu_1\right)\ra -2R^{-1}|\nabla \ln R|^2\nu_1\\
&\le -R^{-1}(\nu_1^2+\nu_2\nu_3)+2R^{-2}|{\Ric}|^2\nu_1-2\la\nabla \ln R, \nabla\left(R^{-1}\nu_1\right)\ra+O(R^{-1}r^{-5/2}).
\end{split}
\ee
Note that by \eqref{na R} and \eqref{shi est}, $|{\Ric}|^2=R_{cd}R_{cd}+2R_{i4}R_{i4}-R_{44}^2=R_{cd}R_{cd}+O(r^{-3})$. Due to a computation similar to the proof of \eqref{R-1nu eqn} and the facts that $U=O(r^{-1})$ and $\overline{U}=R-R_{44}$, we have
\be\label{huge exp}
\begin{split}
R(\nu_1^2+\nu_2\nu_3)-2|{\Ric}|^2\nu_1&=\overline{U}(\nu_1^2+\nu_2\nu_3)-2R_{cd}R_{cd}\nu_1+O(r^{-7/2})\\
&= (\nu_1+\nu_2+\nu_3)(\nu_1^2+\nu_2\nu_3)-\frac{\nu_1}{2}\left[(\nu_1+\nu_2)^2+(\nu_1+\nu_3)^2+(\nu_2+\nu_3)^2\right]\\
&\,\,\,\,+O(r^{-7/2})\\
&=\nu_2^2(\nu_3-\nu_1)+\nu_3^2(\nu_2-\nu_1)+O(r^{-7/2}).
\end{split}
\ee
As before, we define the weighted operator $\Delta_{f-2\ln R}:=\Delta_f+2\la \nabla\ln R, \,\nabla\,\ra$. 
Then, by \eqref{rough eqn r-nu 4d} and \eqref{huge exp}, we obtain a $4$-dimensional analog of \eqref{R-1nu eqn}
\be\label{4d R-nu eqn}
\Delta_{f-2\ln R}\left(R^{-1}\nu_1\right)\le-R^{-2}\left[\nu_2^2(\nu_3-\nu_1)+\nu_3^2(\nu_2-\nu_1)\right]+O(R^{-2}r^{-7/2})+O(R^{-1}r^{-5/2}).
\ee
To deal with the error terms in \eqref{4d R-nu eqn}, we need the auxiliary function $v=-f$. Since $f$ is assumed to be proper \eqref{f growth}, by adding a constant if necessary, we have $v\geq 1$ on $M$ and $v\sim r$ near infinity. Furthermore, by taking the trace of \eqref{steady soliton eqn} and \eqref{ham id}, we have $\Delta_f v=1$. For any $\a>0$, using $\la \nabla R,\nabla f\ra=\Delta R+2|{\Ric}|^2$ and \eqref{shi est}, we have
\be\label{eqn for v-a}
\begin{split}
\Delta_{f-2\ln R}\, v^{-\a}&=-\a v^{-\a-1}\Delta_{f-2\ln R}\, v+\a(\a+1)v^{-\a-2}|\nabla v|^2\\
&=-\a v^{-\a-1}-2\a v^{-\a-1}\la \nabla\ln R, \nabla v\ra+\a(\a+1)v^{-\a-2}|\nabla v|^2\\
&=-\a v^{-\a-1}+2\a v^{-\a-1}R^{-1}(\Delta R+2|{\Ric}|^2)+\a(\a+1)v^{-\a-2}|\nabla v|^2\\
&\le -\a v^{-\a-1}/2+O(R^{-1}v^{-\a-3}),
\end{split}
\ee
outside a compact set of $M$. By \eqref{Rm C0 est r-1} and \cite[Theorem 1.3]{CZh21}, both $R^{-1}U$ and $R^{-1}\nu_1$ are bounded, and there is a positive constant $C$ such that \emph{either one} of the following holds near infinity
\be\label{lin decay}
C^{-1}r^{-1}\le R\le Cr^{-1};
\ee
\be\label{exp decay}
C^{-1}e^{-r}\le R\le Ce^{-r}.
\ee
If $R$ decays exponentially, namely, if \eqref{exp decay} is true, then $|\nu_1|\le c R\le Ce^{-r}$ and Proposition \ref{nu1 low bdd prop} is done. Therefore we may suppose that \eqref{lin decay} holds. 
Using \eqref{f growth} and \eqref{4d R-nu eqn}, one can find a constant $C_1>0$, such that
\bee
\Delta_{f-2\ln R}\left(R^{-1}\nu_1\right)\le -R^{-2}\left[\nu_2^2(\nu_3-\nu_1)+\nu_3^2(\nu_2-\nu_1)\right]+C_1v^{-3/2}.
\eee
Substitution $\alpha=1/2$ in \eqref{eqn for v-a}, we have $\Delta_{f-2\ln R}\, 8v^{-1/2}\leq  -v^{-3/2}$ near infinity and
\be\label{nu v-.5 eqn}
\Delta_{f-2\ln R}\left(R^{-1}\nu_1+8C_1 v^{-1/2}\right)\le -R^{-2}\left[\nu_2^2(\nu_3-\nu_1)+\nu_3^2(\nu_2-\nu_1)\right].
\ee
Since $R^{-1}\nu_1$ is bounded near infinity, by the properness of $v=-f$ \eqref{f growth}, there is a large positive constant $\tau_0$ such that \eqref{nu v-.5 eqn} holds on $\{x: f(x)\leq -\tau_0\}$, and we can choose a large constant $C_1$ such that
\bee
R^{-1}\nu_1+8C_1 v^{-1/2}>0 \quad\text{  on  } \quad\{x: f(x)=-\tau_0\}.
\eee
By \eqref{shi est}, $\la \nabla R,\nabla f\ra=\Delta R+2|{\Ric}|^2$, and the assumption that \eqref{lin decay} is true, it is clear that
\begin{eqnarray*}
\Delta_{f-2\ln R}\ln v&=&v^{-1}\Delta_F v-v^{-2}|\nabla v|^2\\
&=&v^{-1}\left(1+2\la\nabla\ln R,\nabla v\ra\right)-v^{-2}|\nabla v|^2\\
&=&v^{-1}\left(1+O(v^{-1})\right)-v^{-2}|\nabla v|^2\leq 2v^{-1}.
\end{eqnarray*}
For all positive integer $k$, let $G_k:=R^{-1}\nu_1+8C_1 v^{-1/2}+\frac{1}{k}\ln v$. It can be seen that $G_k>0$ on $\{x: f(x)=-\tau_0\}$. By the properness of $f$ \eqref{f growth}, we see that $\lim_{x\to\infty} G_k=\infty$. If $G_k$ is negative somewhere in $\{x: f(x)\leq -\tau_0\}$, then it attains a negative minimum, say, at $q_k\in \{x: f(x)< -\tau_0\}$. It is evident that $\nu_1(q_k)<0$ and $G_k$ satisfies the following inequality at $q_k$
\be\label{max prin ineq}
0\le \Delta_{f-2\ln R}\left(R^{-1}\nu_1+8C_1 v^{-1/2}+\tfrac{1}{k}\ln v\right)\le -R^{-2}\left[\nu_2^2(\nu_3-\nu_1)+\nu_3^2(\nu_2-\nu_1)\right]+2v^{-1}/k.
\ee
Moreover, by \eqref{na R}, \eqref{shi est}, and \eqref{lin decay}, we have $3\nu_3\ge \overline{U}=R-R_{44}\geq R/2$ for large $\tau_0$. From \eqref{max prin ineq} and the linear scalar curvature decay condition we have, at $q_k$ it holds that
\begin{eqnarray*}
\nu_2^2\leq 6R^{-1}\nu_2^2\nu_3\leq 6R^{-1}\nu_2^2(\nu_3-\nu_1)\leq Cv^{-2}/k
\end{eqnarray*}
and thus $|\nu_2|\leq v^{-1}\sqrt{C/k}$. Hence again by \eqref{lin decay}, \eqref{f growth}, \eqref{max prin ineq}, and the fact that $|R^{-1}U|\le c_1$ near infinity, we have
\bee
|\nu_1|/36\le R^{-2}\nu_3^2|\nu_1|\le 2v^{-1}/k+ c_1^2v^{-1}\sqrt{C/k}
\eee
and $R^{-1}|\nu_1|(q_k)\leq C_2k^{-1/2}$ for some positive constant $C_2$ independent on $q_k$ and $k$. In conclusion
\bee
G_k\ge G_k(q_k)\ge -C_2k^{-1/2}.
\eee
The above inequality is obvious when $G_k$ is nonnegative on $\{x: f(x)\leq -\tau_0\}$.  By letting $k\to\infty$, we have $R^{-1}\nu_1+8C_1 v^{-1/2}\ge 0$ on $\{x: f(x)\leq -\tau_0\}$ and Proposition \ref{nu1 low bdd prop} follows.
\end{proof}


\begin{proof}[\textbf{Proof of Theorem \ref{lower bound for Rm in steady}:}]
The lower bound of $\Rm$ in \eqref{steady Rm in r} is a consequence of \eqref{Ric identity}, \eqref{shi est}, \eqref{Rm of lev}, \eqref{tnu lower bdd}, and the estimate on $\nu_1$ in Proposition \ref{nu1 low bdd prop}. The upper estimate of $\Rm$ in \eqref{steady Rm in r} follows from \eqref{Rm C0 est r-1}. This completes the proof of Theorem \ref{lower bound for Rm in steady}.
\end{proof}

Next, we move on to the proof of Corollary \ref{GH limit}.
\begin{proof}[\textbf{Proof of Corollary \ref{GH limit}:}]
By \cite[Theorem 1.3, Proposition 6.1]{CZh21}, if the curvature of $M$ decays exponentially, then Corollary \ref{GH limit}(2) holds. Therefore, we may assume that the scalar curvature satisfies \eqref{lin decay} and show that Corollary \ref{GH limit}(1) is true. For any sequence $p_i\to\infty$ in $M$, let $\Sigma_i:=\{x: f(x)=f(p_i)\}$ and $\tau_i:=-f(p_i)\to\infty$ by the properness of $f$ \eqref{f growth}. From the discussion before Lemma \ref{Rm Ric 4d} and \eqref{f growth}, $\Sigma_{i}$ is a smooth compact connected hypersurface in $M$ and 
\be\label{tau vs r}
c^{-1}r\le \tau_i\le cr \quad\text{  on }\quad \Sigma_i.
\ee
$\tg_i$ denotes the induced metric on $\Sigma_i$ by $M$ and $h_i:=R(p_i)\tg_i$ is the scaled metric by the scalar curvature at $p_i$. In view of the proof of \cite[Proposition 6.1]{CZh21}, the curvature and intrinsic diameter of $(\Sigma_{i}, h_i)$ are uniformly bounded in $i$.
Thus by Gromov's Compactness theorem \cite[Theorem 10.7.2]{BBI01}, the sequence sub-converges to certain compact Alexandrov space $(Y, d_Y)$ of dimension $\le 3$. Moreover by Theorem \ref{lower bound for Rm in steady}, the Gauss equation, \eqref{lin decay}, and \eqref{tau vs r}, we see that $(\Sigma_{i}, h_i)$ has almost nonnegative curvature in the following sense
\be\label{level set non-ve curvature}
\begin{split}
\Rm(h_i)=R(p_i)\Rm(\tg_i)&=R(p_i)\left(\Rm(g)+\Ric\ast\Ric/|\nabla f|^2\right)\\
&\ge -R(p_i)\left( C\tau_i^{-3/2} g\odot g+ C\tau_i^{-2}g\odot g\right)\\
&\ge -2R(p_i)^{-1}\left(C\tau_i^{-3/2} h_i\odot h_i\right)\\
&\ge -C'\tau_i^{-1/2}h_i\odot h_i,
\end{split}
\ee
where $(g\odot g)_{abcd}:=2g_{ad}g_{bc}-2g_{ac}g_{bd}$ is the Kulkarni-Nomizu product. Hence we have $(Y, d_Y)$, being a limit of the sequence $(\Sigma_{i}, h_i)$, is an Alexandrov space of nonnegative curvature. The pointed Gromov-Hausdorff convergence to $\mathbb{R}\times Y$ then follows from the same argument as in the proof of \cite[Proposition 6.1]{CZh21}. 

\end{proof}

\bibliography{bibliography}{}
\bibliographystyle{amsalpha}

\newcommand{\etalchar}[1]{$^{#1}$}
\providecommand{\bysame}{\leavevmode\hbox to3em{\hrulefill}\thinspace}
\providecommand{\MR}{\relax\ifhmode\unskip\space\fi MR }
\providecommand{\MRhref}[2]{%
  \href{http://www.ams.org/mathscinet-getitem?mr=#1}{#2}
}
\providecommand{\href}[2]{#2}

\bigskip
\bigskip

\noindent Department of Mathematics, University of California, San Diego, CA, 92093
\\ E-mail address: \verb"pachan@ucsd.edu "
\\

\noindent Department of Mathematics, University of California, San Diego, CA, 92093
\\ E-mail address: \verb"zim022@ucsd.edu"
\\

\noindent School of Mathematics, University of Minnesota, Twin Cities, MN, 55414
\\ E-mail address: \verb"zhan7298@umn.edu"

\end{document}